\newcommand{\ol}{\overline}
\newcommand{\rank}{\mathrm{rank}}
\newcommand{\Sel}{\mathrm{Sel}^{(2)}}
\newcommand{\Cl}{\mathrm{Cl}}
\newcommand{\Ker}{\mathrm{Ker}}
\newcommand{\res}{\mathrm{res}}
\newcommand{\val}{\mathrm{val}}
\newcommand{\disc}{\mathrm{disc}}
\newcommand{\tr}{\mathrm{tr}}
\renewcommand{\1}{\mathbf{1}}
\newcommand{\fn}{\footnote}
\title{Infinitely many hyperelliptic curves with exactly two rational points} 
\author{Yoshinosuke Hirakawa \and Hideki Matsumura}
\address[Yoshinosuke Hirakawa]{Department of Science and Technology, Keio University, 14-1, Hiyoshi 3-chome, Kouhoku-ku, Yokohama-shi, Kanagawa-ken, Japan}
\email{hirakawa@keio.jp}
\address[Hideki Matsumura]{Department of Science and Technology, Keio University, 14-1, Hiyoshi 3-chome, Kouhoku-ku, Yokohama-shi, Kanagawa-ken, Japan}
\email{hidekimatsumura@keio.jp}
\thanks{This research was supported in part by KAKENHI 26247004, 18H05233 as well as the JSPS Core-to-Core program ``Foundation of a Global Research Cooperative Center in Mathematics focused on Number Theory and Geometry" and the KiPAS program FY2014--2018 of the Faculty of Science and Technology at Keio University. Both authors were supported by the Research Grant of Keio Leading-edge Laboratory of Science $\&$ Technology 2018--2019
 (Grant Numbers 000036 and 000053).}
 \subjclass[2010]{primary 14G05; secondary 11G30}
\keywords{rational points, hyperelliptic curves, $2$-descent, Lutz-Nagell theorem}
\date{\today}
  \def\section{\@startsection{section}{1}{\z@}%
     {4ex plus 0ex}%
     {4ex plus 0ex}%
     {\normalfont\large\bfseries}}
  \def\subsection{\@startsection{subsection}{1}{\z@}%
     {2ex plus 0ex}%
     {2ex plus 0ex}%
     {\normalfont\normalsize\bfseries}}
\theoremstyle{plain}
 \newtheorem{theorem}{Theorem}[section] 
  \crefname{theorem}{Theorem}{Theorems}
 \newtheorem{proposition}[theorem]{Proposition}
 \crefname{proposition}{Proposition}{Propositions}
 \newtheorem{lemma}[theorem]{Lemma}
 \crefname{lemma}{Lemma}{Lemmas}
 \newtheorem{corollary}[theorem]{Corollary}
  \crefname{corollary}{Corollary}{Corollaries}
   \crefname{conjecture}{Conjecture}{Conjectures}
 \crefname{question}{Question}{Questions}
   \crefname{problem}{Problem}{Problems}
 \newtheorem{notation}[theorem]{Notation}
    \crefname{notation}{Notation}{Notations}
\theoremstyle{definition} 
  \crefname{definition}{Definition}{Definitions}
   \crefname{example}{Example}{Examples}
 \newtheorem{remark}[theorem]{Remark}
   \crefname{remark}{Remark}{Remarks}
\begin{document}


\maketitle


\begin{abstract}
In this paper, we construct some families of infinitely many hyperelliptic curves of genus $2$ with exactly two rational points. In the proof, we first show that the Mordell-Weil ranks of these hyperelliptic curves are $0$
and then determine the sets of rational points by using the Lutz-Nagell type theorem for hyperelliptic curves which was proven by Grant.   
\end{abstract}

\tableofcontents

\section{Main theorem}  

In this paper, we determine the set of rational points of the following hyperelliptic curves.

\begin{theorem} \label{MT}
Let $p$ be a prime number, $i, j \in {\mathbb{Z}}$, and $C^{(p;i,j)}$ be a hyperelliptic curve defined by
\[y^2=x(x^2+2^ip^{j})(x^2+2^{i+1}p^{j}).\] 
Suppose that one of the following conditions holds.
\begin{enumerate}
\item $p \equiv 3 \pmod{16}$ and $(i,j)=(0,1)$.
\item $p \equiv 11 \pmod{16}$ and $(i,j)=(1,1)$.
\item $p \equiv  3 \pmod{8}$ and $(i,j)=(0,2)$.
\item $p \equiv - 3 \pmod{8}$ and $(i,j)=(0,2)$.
\end{enumerate}
Then, $C^{(p;i,j)}(\mathbb{Q})=\{(0,0), \infty\}$. 
\end{theorem}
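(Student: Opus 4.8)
The plan is to follow the two-step strategy indicated in the abstract: first bound the Mordell--Weil rank of the Jacobian $J$ of $C = C^{(p;i,j)}$ from above by $0$ via a complete $2$-descent, and then pin down $C(\mathbb{Q})$ inside the resulting finite group $J(\mathbb{Q})$ using Grant's Lutz--Nagell type theorem. As preliminaries I would record the geometry: the quintic $f(x) = x(x^2+2^ip^j)(x^2+2^{i+1}p^j)$ is separable with $\disc(f)$ supported on $\{2,p\}$, so $C$ has genus $2$, good reduction away from $2$ and $p$, and a single rational Weierstrass point $\infty$; the two asserted rational points correspond under the Abel--Jacobi embedding $P \mapsto [P-\infty]$ to $0$ and to the $2$-torsion class $[(0,0)-\infty]$. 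Since $f$ factors over $\mathbb{Q}$ as $x\cdot q_1 \cdot q_2$ with $q_1 = x^2+2^ip^j$ and $q_2 = x^2+2^{i+1}p^j$ both irreducible, a count of Galois-stable even subsets of the six Weierstrass points shows $J(\mathbb{Q})[2] \cong (\mathbb{Z}/2\mathbb{Z})^2$; hence $\dim_{\mathbb{F}_2} J(\mathbb{Q})/2J(\mathbb{Q}) = \rank J(\mathbb{Q}) + 2$.

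For the rank bound I would run the standard $2$-descent on $J$ through the étale algebra $L = \mathbb{Q}[T]/(f(T)) \cong \mathbb{Q}\times K_1 \times K_2$, where $K_1 = \mathbb{Q}(\sqrt{-2^ip^j})$ and $K_2 = \mathbb{Q}(\sqrt{-2^{i+1}p^j})$ (note that for $j=2$ these collapse to $\mathbb{Q}(\sqrt{-1})$ and $\mathbb{Q}(\sqrt{-2})$, independently of $p$). The so-called $x-T$ map sends $[(x_0,y_0)-\infty]$ to the class of $x_0 - T$ and injects $J(\mathbb{Q})/2J(\mathbb{Q})$ into $\Ker\!\big(N\colon L^\ast/(L^\ast)^2 \to \mathbb{Q}^\ast/(\mathbb{Q}^\ast)^2\big)$; the Selmer group $\Sel(J/\mathbb{Q})$ is the subgroup satisfying the local image condition at every place. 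Because the descent is unramified outside $\{2,p,\infty\}$, I would first restrict to the $\{2,p\}$-Selmer group inside $L(\{2,p\},2)$, and then impose the local conditions at $v \in \{2,p,\infty\}$ by computing the images of $J(\mathbb{Q}_v)/2J(\mathbb{Q}_v)$. The objective is to show that these local constraints force $\dim_{\mathbb{F}_2}\Sel(J/\mathbb{Q}) = 2$, which combined with the previous paragraph yields $\rank J(\mathbb{Q}) = 0$ and hence $J(\mathbb{Q}) = J(\mathbb{Q})_{\mathrm{tors}}$.

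With the rank in hand, I would invoke Grant's Lutz--Nagell type theorem for the genus-$2$ Jacobian: any torsion class in $J(\mathbb{Q})$ has integral coordinates in the Mumford/Kummer coordinates attached to the model $y^2=f(x)$, with denominators bounded in terms of $\disc(f)$. Applied to a putative extra rational point $(x_0,y_0)\in C(\mathbb{Q})$ with $x_0\neq 0$, whose class $[(x_0,y_0)-\infty]$ is then torsion, this forces $x_0,y_0\in\mathbb{Z}$ together with explicit size and divisibility constraints coming from $y_0^2 = x_0(x_0^2+2^ip^j)(x_0^2+2^{i+1}p^j)$. A finite elimination---reading the equation modulo suitable powers of $2$ and $p$, where the hypotheses $p\equiv 3 \pmod{16}$, $p\equiv 11 \pmod{16}$, or $p\equiv \pm 3 \pmod 8$ enter to obstruct solvability---then rules out every candidate, leaving only $x_0=0$. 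Together with $\infty$ this gives $C(\mathbb{Q}) = \{(0,0),\infty\}$.

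The main obstacle will be the $2$-descent, and specifically the local analysis at the prime $2$. Since $2$ divides the coefficients of $f$ and $C$ has bad (and, depending on the parity of $i$ and $j$, wildly ramified) reduction there, computing the local image of $J(\mathbb{Q}_2)/2J(\mathbb{Q}_2)$ in $L_2^\ast/(L_2^\ast)^2$ is delicate; it is precisely here, and in the splitting behaviour of $p$ in $K_1$ and $K_2$, that the four congruence conditions must be used to make the global Selmer group exactly $2$-dimensional. Controlling this uniformly across the infinite family---rather than curve by curve---is the crux of the argument.
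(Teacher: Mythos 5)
Your strategy coincides exactly with the paper's: a $2$-descent through the \'etale algebra $L=\mathbb{Q}[T]/(f(T))$ to prove $\rank(J(\mathbb{Q}))=0$, followed by Grant's Lutz--Nagell type theorem and an elementary elimination of integral candidates. However, as written the proposal has a genuine gap at precisely the point you yourself call the crux: nothing in it proves $\dim_{\mathbb{F}_2}\Sel(\mathbb{Q},J)=2$; that equality is only announced as an objective. In the paper this is the bulk of the work (all of \S 2): one must exhibit explicit generators of $\mathrm{Im}(\delta_2)$ and $\mathrm{Im}(\delta_p)$ (for $v=2$ this requires producing non-obvious local points, e.g.\ points with $x_Q=-1$, $x_Q=5$, or quadratic points found via Hensel's lemma, to fill out the $4$-dimensional image), and --- a layer your plan omits entirely --- one must control the \emph{global} side of the local-global comparison: $\Ker(\val)$ sits in an exact sequence $1 \to \mathcal{O}_L^{\times}/\mathcal{O}_L^{\times 2} \to \Ker(\val) \to \Cl(L)[2] \to 0$, so the computation requires the unit groups and the $2$-torsion of the class groups of $\mathbb{Q}(\sqrt{-p})$, $\mathbb{Q}(\sqrt{-2p})$, etc., which the paper handles with Gauss' genus theory (\cref{Genus theory,2Cl}). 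Without this global input, the dimension formula $\dim \Sel(\mathbb{Q},J) = \dim {\val}^{-1}(G) + \dim(\mathrm{Im}(\delta_{2}) \times\mathrm{Im}(\delta_{p})) - \dim((\mathrm{Im}(\delta_{2}) \times\mathrm{Im}(\delta_{p}))+\res_S({\val}^{-1}(G)))$ cannot be evaluated, and the congruence hypotheses on $p$ never actually get used; the footnoted MAGMA data in the paper show these hypotheses are not cosmetic, since outside them the Selmer group genuinely exceeds dimension $2$.

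A second, smaller inaccuracy: you place the congruence hypotheses in the final elimination step as well (``where the hypotheses $p\equiv 3 \pmod{16}$, \dots enter to obstruct solvability''). In the paper they are not needed there: \cref{AJ} rules out any rational point with $b\neq 0$ by purely multiplicative arguments (coprime powers of $p$, parity of the $2$-adic valuation, the inequality $1<(a^2+2^{i+1}p^j)/(a^2+2^ip^j)<2$), valid for every prime $p\neq 3$ and, when $p=3$, for all $(i,j)\not\equiv(2,2),(3,2)\pmod 4$. The congruence conditions serve only to force the rank to be $0$. So your outline is structurally sound and identical to the paper's, but it defers rather than performs the computation on which the theorem rests.
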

Here, note that, in each case, there exist infinitely many prime numbers satisfying the above congruent condition by Dirichlet's theorem on arithmetic progressions.
A striking feature of \cref{MT} is that we can treat infinitely many prime numbers $p$ which are ``$2$-adically near to each other" simultaneously.

Our proof of \cref{MT} is based on natural generalizations of the $2$-descent argument and the Lutz-Nagell theorem (cf. \cite{Stoll2001}, \cite[Theorem 3]{Grant}).
Recall that, in the case of elliptic curves, the $2$-descent argument makes it possible to bound the Mordell-Weil rank of an elliptic curve by means of the $2$-Selmer group, and
the Lutz-Nagell theorem makes it possible to determine the torsion points of an elliptic curve by means of the discriminant.
For example, by applying them, we can prove that the only rational points on an elliptic curve defined by $y^{2} = x(x+p)(x-p)$ with a prime number 
$p \equiv 3 \mod{8}$ are $(x, y) = (0, 0), (p, 0), (-p, 0)$ and $\infty$, i.e., such a prime number $p$ is never a congruent number (cf. \cite[D27]{Guy} and the references therein). 
Note that the $2$-descent argument allows us to treat prime numbers $p$ which are ``$2$-adically near to each other" simultaneously.

In  \S $2$, we prove that the Mordell-Weil rank of the Jacobian variety $J^{(p; i, j)}$ of $C^{(p; i, j)}$ is $0$ by the $2$-descent argument \cite{Stoll2001}. 
\fn{The primary advantage of our families is that the verification of this fact is relatively easy.
}
Therefore, it is sufficient to determine the set of rational points on $C^{(p; i, j)}$ which map to torsion points of $J^{(p; i, j)}$ via the Abel-Jacobi map associated with the point at infinity $\infty$.

In \S $3$, we carry out this task by applying the Lutz-Nagell type theorem for hyperelliptic curves which was proven by Grant \cite{Grant}. 
\fn{Since $\# C^{(p; i, j)}(\mathbb{F}_q) \geq 4$ for every $p$, $i$, $j$ and another prime number $q \neq 2$, $p$, 
the Chabauty-Coleman method (cf. \cite[Corollary 4a]{Coleman Duke}, \cite[Theorem 5.3 (b)]{MP}) is not sufficient.} 

\begin{remark}
\begin{enumerate}
\item If $p$ is a prime number which does not satisfy the above congruence condition, then $C^{(p;i,j)}$ may have additional rational points.
 For instance, $C^{(17;0,1)}$ has additional rational points $(8, \pm 252)$.
\item 
Let $i, j\in {\mathbb{Z}}_{\geq 0}$ and $C^{(i, j)}$ be a hyperelliptic curve over $\mathbb{Q}(t)$ defined by
\[y^2=x(x^2+2^it^{j})(x^2+2^{i+1}t^{j}). \]
Then, $C^{(i, j)}(\mathbb{Q}(t))=\{(0,0),\infty\}$. 
For $(i,j)=(0,1)$, $(1,1)$, $(0,2)$, this is a consequence of \cref{MT}.
In fact, we can deduce it easily from the abc conjecture for polynomials (cf. \cite[Theorem]{Snyder}) for general $(i,j)$ (Appendix A).
\end{enumerate}
\end{remark}


\section{$2$-descent}

Let $p$ be a (odd) prime number, $i, j \in {\mathbb{Z}}$, and $f(x)=x(x^2+2^ip^{j})(x^2+2^{i+1}p^{j})$.
Let $ C^{(p; i, j)}$ be a hyperelliptic curve defined by $y^2=f(x)$ and $J^{(p; i, j)}$ be its Jacobian variety.
In this section, we prove the following theorem.
\begin{theorem}  \label{rank=0}
Suppose that one of the following conditions holds.
\begin{enumerate}
\item $p \equiv 3 \pmod{16}$ and $(i,j)=(0,1)$.
\item $p \equiv 11 \pmod{16}$ and $(i,j)=(1,1)$.
\item $p \equiv  3 \pmod{8}$ and $(i,j)=(0,2)$.
\item $p \equiv - 3 \pmod{8}$ and $(i,j)=(0,2)$.
\end{enumerate}
Then, we have $\rank(J^{(p; i, j)}(\mathbb{Q})) = 0$.
\end{theorem}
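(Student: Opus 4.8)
The plan is to run an explicit $2$-descent on $J = J^{(p;i,j)}$ following Stoll \cite{Stoll2001}, bounding $\rank(J(\mathbb{Q}))$ by $\dim_{\mathbb{F}_2}\Sel(J/\mathbb{Q})$ and computing the latter directly. Since $f$ has odd degree $5$, the curve carries a single rational Weierstrass point $\infty$ at infinity, and the descent proceeds through the étale algebra $A = \mathbb{Q}[x]/(f(x))$. As $-2^ip^{j}$ and $-2^{i+1}p^{j}$ are negative, the two quadratic factors $x^2+2^ip^{j}$ and $x^2+2^{i+1}p^{j}$ are irreducible, so $A \cong \mathbb{Q}\times K_1\times K_2$ with $K_1=\mathbb{Q}(\sqrt{-2^ip^{j}})$ and $K_2=\mathbb{Q}(\sqrt{-2^{i+1}p^{j}})$. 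In the four cases these collapse to convenient shapes: for $(i,j)=(0,1)$ one gets $K_1=\mathbb{Q}(\sqrt{-p}),\,K_2=\mathbb{Q}(\sqrt{-2p})$; for $(1,1)$ the two are interchanged; and for $(0,2)$ one gets $K_1=\mathbb{Q}(\sqrt{-1}),\,K_2=\mathbb{Q}(\sqrt{-2})$, which notably do not depend on $p$. I would then record the descent injection $\delta : J(\mathbb{Q})/2J(\mathbb{Q}) \hookrightarrow A^*/(A^*)^2$, sending $[(x_0,y_0)-\infty]$ to the class of $x_0-\theta$ (where $\theta$ is the image of $x$ in $A$), whose image lies in the subgroup of classes with square norm to $\mathbb{Q}^*/(\mathbb{Q}^*)^2$ since $N_{A/\mathbb{Q}}(x_0-\theta)=f(x_0)=y_0^2$. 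Defining $\Sel(J/\mathbb{Q})$ as the subgroup of such square-norm classes that are unramified outside $S=\{2,p,\infty\}$ and lie in $\operatorname{im}(\delta_v)$ for every $v\in S$, the fundamental inequality reads $\rank(J(\mathbb{Q})) + \dim_{\mathbb{F}_2} J(\mathbb{Q})[2] \le \dim_{\mathbb{F}_2}\Sel(J/\mathbb{Q})$.

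Next I would pin down the rational $2$-torsion. The finite Weierstrass points have $x$-coordinates $0,\pm\sqrt{-2^ip^{j}},\pm\sqrt{-2^{i+1}p^{j}}$, and $J[2]$ is generated by the classes $[(r,0)-\infty]$ subject to the single relation $\sum_r[(r,0)-\infty]=0$. Since $K_1\neq K_2$ in every case, the Galois group acts as $(\mathbb{Z}/2)^2$ by independently interchanging each conjugate pair, and a direct check shows that the fixed subspace is spanned by $[(0,0)-\infty]$ and $[(\sqrt{-2^ip^{j}},0)+(-\sqrt{-2^ip^{j}},0)-2\infty]$. Hence $J(\mathbb{Q})[2]\cong(\mathbb{Z}/2)^2$ and $\dim_{\mathbb{F}_2}J(\mathbb{Q})[2]=2$. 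Consequently $\rank(J(\mathbb{Q}))=0$ follows as soon as I prove $\dim_{\mathbb{F}_2}\Sel(J/\mathbb{Q})=2$, i.e. that the Selmer group is exhausted by $\delta(J(\mathbb{Q})[2])$.

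To bound $\Sel(J/\mathbb{Q})$ from above I would first describe the ambient space of candidates: the group of square-norm classes in $A^*/(A^*)^2$ unramified outside $S$ is finite, with $\mathbb{F}_2$-dimension controlled by the $S$-unit groups and the $2$-ranks of $\Cl(K_1)$ and $\Cl(K_2)$. Here genus theory makes everything uniform in $p$: the relevant imaginary quadratic fields have $2$-ranks of class groups determined solely by the number of ramified primes in their discriminants (and $\mathbb{Q}(\sqrt{-1}),\mathbb{Q}(\sqrt{-2})$ have class number $1$), so the ambient dimension is the same for all admissible primes. I would then cut the ambient space down by imposing the local conditions. The archimedean place is immediate: $A\otimes\mathbb{R}=\mathbb{R}\times\mathbb{C}\times\mathbb{C}$, and since $\mathbb{C}^*/(\mathbb{C}^*)^2$ is trivial, only the real factor is constrained. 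At $v=p$ the local image of $\delta_p$ is governed by how $p$ ramifies in $K_1,K_2$ and by the symbols $\left(\tfrac{-1}{p}\right)$, $\left(\tfrac{2}{p}\right)$, $\left(\tfrac{-2}{p}\right)$; the congruence hypotheses fix these symbols—and, in the mod-$16$ cases, the pertinent biquadratic residue conditions—identically for every $p$ in the class.

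The decisive and most delicate step, which I expect to be the main obstacle, is the local analysis at $v=2$. One must compute $J(\mathbb{Q}_2)/2J(\mathbb{Q}_2)$ and the image of $\delta_2$ in $A_2^*/(A_2^*)^2$, where $A_2=\mathbb{Q}_2[x]/(f(x))$ decomposes according to the splitting of $x^2+2^ip^{j}$ and $x^2+2^{i+1}p^{j}$ over $\mathbb{Q}_2$. Because $2$ ramifies in these factors and $\mathbb{Q}_2^*/(\mathbb{Q}_2^*)^2\cong(\mathbb{Z}/2)^3$ is comparatively large, determining the local image demands a careful accounting of $2$-adic square classes. This is exactly where the precise residue of $p$ modulo $16$ (cases (1),(2)) or modulo $8$ (cases (3),(4)) becomes indispensable: it fixes the $2$-adic splitting type and hence the local image uniformly, so that all primes in a fixed congruence class are handled simultaneously. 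Intersecting $\operatorname{im}(\delta_2)$ with the square-norm subgroup and with the conditions already imposed at $\infty$ and $p$, I expect to be left with exactly the $2$-dimensional subspace $\delta(J(\mathbb{Q})[2])$; verifying that this intersection kills the few remaining ``extra'' candidate classes that the square-norm condition alone does not eliminate is the crux, and yields $\dim_{\mathbb{F}_2}\Sel(J/\mathbb{Q})=2$ and therefore $\rank(J(\mathbb{Q}))=0$.
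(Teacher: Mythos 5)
Your framework is the same as the paper's: Stoll's explicit $2$-descent through the \'etale algebra $L=\mathbb{Q}\times\mathbb{Q}(\sqrt{-2^ip^j})\times\mathbb{Q}(\sqrt{-2^{i+1}p^j})$, the computation $\dim_{\mathbb{F}_2}J(\mathbb{Q})[2]=2$, Gauss genus theory to control the class groups, local conditions at $S=\{2,p,\infty\}$, and the reduction of the theorem to $\dim_{\mathbb{F}_2}\Sel(\mathbb{Q},J)=2$. That part of your proposal is correct, including the observation that for $(i,j)=(0,2)$ the quadratic fields $\mathbb{Q}(\sqrt{-1})$, $\mathbb{Q}(\sqrt{-2})$ do not depend on $p$, which the paper also exploits.

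The genuine gap is that your proof stops exactly where the content of the theorem begins. From the local analysis onward, every step is stated as an expectation rather than carried out: you never exhibit generators of $\mathrm{Im}(\delta_p)$ or $\mathrm{Im}(\delta_2)$, never compute the group of square-norm classes unramified outside $S$ (the paper's $\val^{-1}(G)$, which requires both $\mathcal{O}_L^{\times}/\mathcal{O}_L^{\times 2}$ and genus-theoretic facts such as: the class of $(2,T_3)$ in $\Cl(\mathbb{Q}(\sqrt{-2p}))$ is not a square), and never do the final $\mathbb{F}_2$-linear algebra. This cannot be dismissed as routine verification: the collapse of the Selmer group to dimension $2$ is special to the listed congruence classes --- the paper records that for other residues machine computation suggests $\dim_{\mathbb{F}_2}\Sel(\mathbb{Q},J)>2$ --- so ``I expect to be left with exactly the $2$-dimensional subspace'' is precisely the assertion requiring proof, and nothing in your setup forces it. Concretely, what is missing is: (a) explicit non-torsion local points generating $\mathrm{Im}(\delta_2)$ beyond the $2$-torsion images given by Schaefer's lemma --- e.g.\ in case (1) a point with $x_Q=-1$ (using $f(-1)\equiv 4\pmod{32}$) when $p\equiv 3\pmod{32}$, but a different point with $x_Q=5$ when $p\equiv 19\pmod{32}$ (so the argument even subdivides modulo $32$), together with a conjugate pair of points over $\mathbb{Q}_2(\sqrt{2})$ produced by Hensel's lemma; (b) the analogous explicit generators at $v=p$; and (c) the verification, via explicit relations among these generators, that $\dim\bigl((\mathrm{Im}(\delta_2)\times\mathrm{Im}(\delta_p))+\res_S(\val^{-1}(G))\bigr)$ equals $11$ in cases (1),(2) and $12$ in cases (3),(4), which is what yields $\dim\Sel(\mathbb{Q},J)=7+6-11=2$, resp.\ $7+7-12=2$, and hence $\rank(J(\mathbb{Q}))\le 2-2=0$.
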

We treat the above four cases separately but in a similar manner in the following four subsections respectively.
Since $J^{(p; i, j)}(\mathbb{Q)}/2J^{(p; i, j)}(\mathbb{Q})$ can be embedded into the $2$-Selmer group $\Sel(\mathbb{Q},J^{(p; i, j)})$, 
in order to bound the Mordell-Weil rank from above, it is sufficient to calculate the dimension of the $2$-Selmer group.
By \cite[p. 256]{Stoll2001}, we have
\begin{align*}
\dim \Sel(\mathbb{Q},J^{(p; i, j)}) =& \dim {\val}^{-1}(G) + \dim(\mathrm{Im}(\delta_{2}) \times\mathrm{Im}(\delta_{p})) \\
&- \dim((\mathrm{Im}(\delta_{2}) \times\mathrm{Im}(\delta_{p}))+\res_S({\val}^{-1}(G))).
\end{align*} 
In each case, we can prove that the right hand side equals $2$. 
\fn{In other cases, computation by MAGMA \cite{Bosma-Cannon-Playoust} suggests that $\dim_{\mathbb{F}_2} \Sel(\mathbb{Q}, J^{(p; i, j)})> 2$.
} 
\fn{Note that $\mathrm{Im}(\delta_{\infty})$ is trivial.
Note also that $\dim_{\mathbb{F}_2} \mathrm{Im}(\delta_{2})$
(resp. $\dim_{\mathbb{F}_2} \mathrm{Im}(\delta_{p})$) is independent of prime number $p$.
For the detail, see \cref{Jp01,J201,Jp11,J211,Jp02,J202,Jp02',J202'}.
More strongly, $C^{(p;i,j)}$ and $C^{(q;i,j)}$ are isomorphic over $\mathbb{Q}_2$ whenever $p \equiv q \pmod{16}$.  
Moreover, $C^{(p;0,2)}$ and $C^{(q;0,2)}$ are isomorphic over $\mathbb{Q}_2$ whenever $p^2 \equiv q^2 \pmod{16}$.
} 
Here and after, we follow the notation in \cite{Stoll2001} as below.

\begin{notation}
We fix $p$, $i$, and $j$, so we abbreviate $C^{(p;i,j)}$ to $C$ and $J^{(p;i,j)}$ to $J$.
Denote
\begin{itemize}
\item the $x$-coordinate of the point $P \in C(\mathbb{Q})$ by $x_{P}$,

\item every divisor class in $J(\mathbb{Q})$ represented by a divisor $D$ simply by $D$,

\item a fixed algebraic closure of $\mathbb{Q}$ by $\ol{\mathbb{Q}}$.

\end{itemize}
For every place $v$, we also use a similar notation and fix an embedding $\ol{\mathbb{Q}} \hookrightarrow \ol{\mathbb{Q}_v}$. 
Define
\begin{itemize}
\item $L:=\mathbb{Q}[T]/(f(T)) \overset{\simeq}{\to} \prod_{i=1}^3 L^{(i)}; \; T \mapsto (T_1;T_2;T_3)$,
 where $L^{(1)}:=\mathbb{Q}[T_1]/(T_1)$, $L^{(2)}:=\mathbb{Q}[T_2]/(T_2^2+2^ip^j)$ 
and $L^{(3)}:=\mathbb{Q}[T_3]/(T_3^2+2^{i+1}p^j)$.
We denote the trivial elements in $L^{\times}$ and $L^{\times}/{L^{\times 2}}$ by $\1$.
\end{itemize}
For every place $v$, define  
\begin{itemize}
\item
$L_v:=\mathbb{Q}_v[T]/(f(T)) \overset{\simeq}{\to}  \prod_{i=1}^m L_v^{(i)};$ $T \mapsto (T_1;\cdots;T_m)$, where $f(T)=f_1(T) \cdots f_m(T)$ is the irreducible decomposition in $\mathbb{Q}_v[T]$ and $L_v^{(i)}:=\mathbb{Q}_v[T_i]/(f_i(T_i))$.
We denote the trivial elements in $L_v^{\times}$ and $L_v^{\times}/{L_v^{\times 2}}$ by $\1_v$.

\item
$\delta_{v} : J(\mathbb{Q}_{v}) \to L_v^{\times}/L_v^{\times 2};$ $D = \sum_{i = 1}^{n} m_{i} P_{i} \mapsto  \prod_{i = 1}^{n}(x_{P_{i}}-T)^{m_{i}}$, where $D$ is a divisor whose support is disjoint from the support of the divisor $\mathrm{div}(y)$

\item
$\res_v: L^{\times}/L^{\times 2} \to L_v^{\times}/L_v^{\times 2}$ as the map induced by $L \to L_v;$ $T \mapsto T$.

\item
$I_v(L):=\prod_{i=1}^3 \mbox{(the group of fractional ideals of $L_v^{(i)}$)}$.
We often denote
\begin{itemize}
\item 
 each element of $I_v(L)$ and $I_v(L)/I_v(L)^2$ by $(\mathfrak{a}^{(1)}; \ldots; \mathfrak{a}^{(m)})_v$. 
 \item  the trivial elements of $I_v(L)$ and $I_v(L)/I_v(L)^2$ by $\1_v$.
\end{itemize}
\item
$\val_v: L_v^{\times} \to I_v(L);$ $a \mapsto (a)$. 
\fn{Let $\mathfrak{q}_{v}^{(i, 1)}, ..., \mathfrak{q}_{v}^{(i, j_{i})}$ be the prime ideals of $L^{(i)}$ above $v$, then
\begin{align*}
\val_v(\alpha_{1}, \dots, \alpha_{m})
= \left(\alpha_{1}^{v\left(\mathfrak{q}_{v}^{(1, j_{1})} \right)}, \dots, \alpha_{m}^{v\left( \mathfrak{q}_{v}^{(1, j_{1})} \right)}; \dots;  \alpha_{1}^{v\left( \mathfrak{q}_{v}^{(m, 1)} \right)}, \dots, \alpha_{m}^{v\left( \mathfrak{q}_{v}^{(m, j_{m})} \right)} \right),
\end{align*}
which is well-defined up to the order of $\mathfrak{q}_{v}^{(i, 1)}, ..., \mathfrak{q}_{v}^{(i, j_{i})}$ for each $i$. 
Here, we identify the prime number $v$ and the associated valuation map.}
We denote the induced map $L_v^{\times}/L_v^{\times 2} \to I_v(L)/I_v(L)^2$ simply by $\val_v$.

\item 
$G_{v} := \val_{v}(\mathrm{Im}(\delta_{v})) \subset I_v(L)/I_v(L)^2$.
\end{itemize}
Finally, define
\begin{itemize}
\item
$I(L) :=\prod_v I_v(L) \simeq \prod_{i=1}^3 (\mbox{the group of fractional ideals of $ L^{(i)}$})$.
We identify $I_2(L) \times I_p(L)$ with its image in $I(L)$.

\item $\Cl(L):=\prod_{i=1}^3 \Cl(L^{(i)})$, where $\Cl(L^{(i)})$ are the ideal class groups of $L^{(i)}$. 

\item $S := \{2, p, \infty\}$.

\item 
$\res_S:=\prod_{v \in S \setminus \{\infty\}} \res_v: L^{\times}/L^{\times 2} \to \prod_{v \in S \setminus \{\infty\}} L_v^{\times}/L_v^{\times 2}$.

\item $\val:=\prod_{v} \val_v \circ \res_v: L^{\times} \to I(L)$. 
We denote the induced map $L^{\times}/L^{\times 2} \to I(L)/I(L)^2$ simply by $\val$.

\item
$G:=\prod_{v \in S \setminus \{\infty\}} G_v \subset I(L)/I(L)^2$.

\item
$W:=\Ker(G \to \Cl(L)/\Cl(L)^2)$.
\end{itemize}
\end{notation}


\subsection{Case $(i,j)=(0,1)$}

Suppose that $p \equiv 3 \pmod{16}$. Then, we have the following irreducible decompositions:
\[\begin{cases}
L_2=\mathbb{Q}_2[T_1]/(T_1) \times \mathbb{Q}_2[T_2]/(T_2^2+p) \times \mathbb{Q}_2[T_3]/(T_3^2+2p), \\
L_p=\mathbb{Q}_p[T_1]/(T_1) \times \mathbb{Q}_p[T_2]/(T_2^2+p) \times \mathbb{Q}_p[T_3]/(T_3^2+2p).
\end{cases}\] 

\begin{lemma} \label{2 torsion01}
The following two elements form an $\mathbb{F}_{2}$-basis of $J(\mathbb{Q})[2]$ and $J(\mathbb{Q}_{v})[2]$ for $v = 2, p, \infty$: 
\begin{align*}
(0,0) &- \infty ,&
\sum_{\substack{P \in C(\ol{\mathbb{Q}}) \\ x_{P}^{2}+p = 0}}P &- 2\infty. 
\end{align*}

In particular, we have the following table.
\fn{The condition $p \equiv 3 \pmod{16}$ is used only to deduce that $-p$ is not square in $\mathbb{Q}_{v}$ for $v = 2$, $p$. In fact, this lemma is true for $p \equiv 1, 3, 5  \pmod{8}$.}
\begin{table}[ht]
    \begin{tabular}{|l||l|l|} \hline
  $v$   & $\dim J(\mathbb{Q}_{v})[2]$ & $\dim \mathrm{Im}(\delta_{v})$ 
  \\ \hline \hline
     $2$   & $2$ & $4$      \\ \hline
    $p$   & $2$   & $2$       \\ \hline
      $\infty$   & $2$ & $0$   \\ \hline  
    \end{tabular}
\end{table}
\end{lemma}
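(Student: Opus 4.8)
The plan is to combine the classical combinatorial description of the $2$-torsion of a hyperelliptic Jacobian possessing a rational Weierstrass point with the standard local index formula for $J(\mathbb{Q}_v)/2J(\mathbb{Q}_v)$. Since $f$ has odd degree $5$, the point $\infty$ is a single rational Weierstrass point, and the six branch points of the hyperelliptic map are $\infty$ together with the finite Weierstrass points $(0,0)$, $(\pm\sqrt{-p},0)$, $(\pm\sqrt{-2p},0)$. For any field $K$ of characteristic $0$, $J(K)[2]$ is the group of $\mathrm{Gal}(\ol{\mathbb{Q}}/K)$-stable even-cardinality subsets of these six points modulo the relation identifying a subset with its complement; equivalently, $J[2]$ is generated by the classes $[(a,0)-\infty]$ with $f(a)=0$, subject to the single relation $\sum_{f(a)=0}[(a,0)-\infty]=[\mathrm{div}(y)]=0$. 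First I would confirm directly that the two displayed elements are $2$-torsion using the function identities $\mathrm{div}(x-a)=2(a,0)-2\infty$ and $\mathrm{div}(x^{2}+p)=2\big(\sum_{x_P^2+p=0}P-2\infty\big)$, and I would note that both elements are Galois-stable, hence lie in $J(\mathbb{Q})[2]$ and in every $J(\mathbb{Q}_v)[2]$.

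Next I would compute the dimensions from the square classes of $-p$ and $-2p$ at each place. For $v=p$ both have odd valuation and so are non-squares; for $v=2$ the hypothesis $p\equiv 3\pmod{16}$ gives $-p\equiv 5\pmod 8$ (a non-square in $\mathbb{Q}_2$) while $-2p$ again has odd valuation; and for $v=\infty$ both are negative. Therefore, over each of $\mathbb{Q}$, $\mathbb{Q}_2$, $\mathbb{Q}_p$, $\mathbb{R}$ the polynomial $f$ factors as one linear factor times two \emph{irreducible} quadratics, so the Galois-stable even subsets are exactly the unions of the three blocks $\{(0,0),\infty\}$, $\{(\sqrt{-p},0),(-\sqrt{-p},0)\}$, $\{(\sqrt{-2p},0),(-\sqrt{-2p},0)\}$ (here $(0,0)$ and $\infty$ are forced to appear together by the parity constraint, and each conjugate pair is preserved since its $x$-coordinates are distinct). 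These three block-classes satisfy the single relation that their sum is the whole set of branch points, hence $0$; this gives $\dim J(\mathbb{Q})[2]=\dim J(\mathbb{Q}_v)[2]=2$ for $v=2,p,\infty$, and it exhibits the two displayed elements as the first two block-classes, which are linearly independent. Being two independent vectors in a $2$-dimensional space, they form a basis in every case, which proves the first assertion and the first column of the table.

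Finally I would read off the column $\dim\mathrm{Im}(\delta_v)$ from the isomorphism $\mathrm{Im}(\delta_v)\cong J(\mathbb{Q}_v)/2J(\mathbb{Q}_v)$ — valid because the descent map is injective modulo $2J$ in the odd-degree case (the rational Weierstrass point at $\infty$) as in the setup of \cite{Stoll2001} — together with the local index formula $\#\big(J(\mathbb{Q}_v)/2J(\mathbb{Q}_v)\big)=\#J(\mathbb{Q}_v)[2]\cdot|2|_v^{-g}$ with $g=2$, where $|\cdot|_v$ is the normalized absolute value. This yields $\dim\mathrm{Im}(\delta_p)=2$ (as $|2|_p=1$) and $\dim\mathrm{Im}(\delta_2)=2+2=4$ (as $|2|_2^{-1}=2$), while the real analogue $\#\big(J(\mathbb{R})/2J(\mathbb{R})\big)=\#J(\mathbb{R})[2]\cdot 2^{-g}$ gives $\dim\mathrm{Im}(\delta_\infty)=2-2=0$; all three match the table. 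The torsion computation and the square-class bookkeeping are routine once organized this way, so I expect the only genuinely delicate point to be the correct invocation of the local index factor $|2|_v^{-g}$ at $v=2$ and its real counterpart at $v=\infty$, rather than the identification of $J(\mathbb{Q}_v)[2]$ itself.
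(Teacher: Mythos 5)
Your proposal is correct and takes essentially the same route as the paper: the paper's proof consists of citing \cite[Lemma 5.2]{Stoll2014} for the identification of the $2$-torsion (which rests exactly on the non-squareness of $-p$ and $-2p$ in $\mathbb{Q}_v$ that you verify) and the formula of \cite[p.~451, proof of Lemma 3]{FPS} for $\dim \mathrm{Im}(\delta_{v})$, which is precisely your local index formula $\#\bigl(J(\mathbb{Q}_v)/2J(\mathbb{Q}_v)\bigr)=\#J(\mathbb{Q}_v)[2]\cdot|2|_v^{-g}$ together with its real analogue and Schaefer's injectivity of $\delta_v$ in the odd-degree case. The only difference is that you supply direct proofs (even subsets of the six branch points modulo complementation, divisor computations for $\mathrm{div}(x)$ and $\mathrm{div}(x^2+p)$) of the two facts the paper delegates to references.
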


\begin{proof}
The first statement follows from \cite[Lemma 5.2]{Stoll2014}. 
Note that neither $-p$ nor $-2p$ is not square in $\mathbb{Q}_{v}$ for $v = 2, p, \infty$. 
The second statement follows from the following formula (cf. \cite[p. 451, proof of Lemma 3]{FPS}).
\[\dim_{\mathbb{F}_2} \mathrm{Im}(\delta_{v}) =\dim_{\mathbb{F}_2} J({\mathbb{Q}_{v}})[2]
\begin{cases}
+0 & (v \neq 2, \infty), \\
+2 & (v = 2), \\
-2 & (v = \infty). \\
\end{cases}\]
\end{proof}

In the calculation of $\mathrm{Im}(\delta_{v})$, we use the following formula.
\begin{lemma} [{\cite[Lemma 2.2]{Schaefer}}]  \label{image of 2-torsion}
Let $C:y^2=f(x)$ be a hyperelliptic curve over $\mathbb{Q}$ such that $\deg f$ is odd. 
For every place $v$ of $\mathbb{Q}$, any point on $J(\mathbb{Q}_v)$ can be represented by a divisor of degree $0$ whose support is disjoint from the support of the divisor $\mathrm{div}(y)$.
 Then, we have $\delta_{v}(D) = 1$ if $D$ is supported at $\infty$.
 If $D$ is of the form $D=\sum_{i=1}^{n} D_i$ with $D_i=(\alpha_i,0)$, where 
 $\alpha_i$ runs through all roots of a monic irreducible factor $h(x) \in \mathbb{Q}_v[x]$ of $f(x)$, then we have
 \[\delta_{v}(D)=(-1)^{\deg h} \left(h(x)-\frac{f(x)}{h(x)} \right).\] 
\end{lemma}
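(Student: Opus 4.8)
The plan is to take as given the definition of $\delta_v$ recorded in the Notation, namely that $\delta_v(\sum_i m_i P_i) = \prod_i (x_{P_i} - T)^{m_i}$ whenever the representing divisor has support disjoint from $\mathrm{div}(y)$, and to reduce both assertions to moving the ``bad'' divisors (those meeting $\infty$ or the Weierstrass points) to linearly equivalent ``good'' ones. For the first assertion I would argue that $\infty$ contributes trivially: in the odd-degree model a degree-$0$ divisor supported at the single point $\infty$ is the zero class, and more generally the well-definedness of $\delta_v$ on divisor classes (two representatives differ by $\mathrm{div}(\phi)$, whose image is a square) lets one delete the base point $\infty$ from any support. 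Using this, the second assertion reduces to computing $\delta_v$ on the class $D_h := \sum_{h(\alpha) = 0}((\alpha, 0) - \infty)$, which is $2$-torsion since $\mathrm{div}(h(x)) = 2 D_h$.

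The core step is a $v$-adic deformation exploiting that $\delta_v$ is continuous with finite (hence discrete) target, so it is locally constant on $J(\mathbb{Q}_v)$. Writing $K := \mathbb{Q}_v[x]/(h(x))$ and letting $\alpha \in K$ denote the image of $x$, I would perturb the $K$-rational Weierstrass point $(\alpha, 0)$ to $(a, b)$ with $a := \alpha + f'(\alpha)s^2$ for small $s \in \mathbb{Q}_v$. The point of this choice is that $f(a) = f'(\alpha)^2 s^2(1 + O(s^2))$ is a square in $K$ by Hensel's lemma, so $(a,b) \in C(K)$; summing over the embeddings of $K/\mathbb{Q}_v$ then produces a Galois-stable, hence $\mathbb{Q}_v$-rational, effective divisor $E_s$ with $E_s - (\deg h)\infty \to D_h$ as $s \to 0$. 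By local constancy, $\delta_v(D_h) = \delta_v(E_s - (\deg h)\infty) = \prod_\sigma (a^\sigma - T)$ for all sufficiently small $s \neq 0$, where the product runs over the embeddings $\sigma \colon K \hookrightarrow \ol{\mathbb{Q}_v}$ and the support of $E_s$ is now disjoint from $\mathrm{div}(y)$.

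Finally I would evaluate this element component by component and let $s \to 0$. In a component $L_v^{(j)}$ whose root $T_j$ is not a root of $h$, the factor is $\prod_\sigma(a^\sigma - T_j) \to \prod_\sigma(\alpha^\sigma - T_j) = (-1)^{\deg h} h(T_j)$, matching the claimed formula since $f/h$ vanishes there. The hard part will be the ``diagonal'' component, where $L_v^{(j)} = K$ and $T_j$ equals one of the roots of $h$: here the naive limit of the factor $a^\sigma - T_j$ for $\sigma = \mathrm{id}$ is $a - \alpha = f'(\alpha)s^2$, which vanishes and so must be read modulo squares as $f'(\alpha)$. Combining this with the remaining conjugate factors $\prod_{\sigma \neq \mathrm{id}}(\alpha^\sigma - \alpha) = (-1)^{\deg h - 1}h'(\alpha)$ and the identity $f'(\alpha) = h'(\alpha)\,g(\alpha)$ with $g = f/h$ yields $(-1)^{\deg h - 1} h'(\alpha)^2 g(\alpha) \equiv (-1)^{\deg h - 1} g(T_j) \pmod{(K^\times)^2}$, exactly the value of $(-1)^{\deg h}(h(x) - f(x)/h(x))$ in this component. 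The main obstacle is precisely this diagonal self-term: it is the square-perturbation $a - \alpha \in f'(\alpha)\cdot(K^\times)^2$ that simultaneously pins down the value $f/h$ and the sign $(-1)^{\deg h}$, while the Galois-equivariant (conjugate-sum) form of the deformation is what keeps the deforming divisor $\mathbb{Q}_v$-rational when $\deg h > 1$.
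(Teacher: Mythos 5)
The paper never proves this lemma: it is quoted verbatim, with proof, from \cite[Lemma 2.2]{Schaefer}, so the only available comparison is with Schaefer's argument, which is algebraic (it rests on the identification of $\delta_{v}$ with a connecting homomorphism and on Weil reciprocity). Your $v$-adic deformation argument is therefore a genuinely different route, and its computational core is correct: the perturbation $a=\alpha+f'(\alpha)s^{2}$ does give $f(a)=f'(\alpha)^{2}s^{2}(1+O(s^{2}))$, hence a point $(a,b)\in C(K)$ for $|s|_{v}$ small; the conjugate divisor $E_{s}$ is $\mathbb{Q}_{v}$-rational; and the componentwise limits are right, including the delicate diagonal one, where $a-\alpha=f'(\alpha)s^{2}\equiv f'(\alpha)$ modulo squares, $\prod_{\sigma\neq\mathrm{id}}(\alpha^{\sigma}-\alpha)=(-1)^{\deg h-1}h'(\alpha)$, and $f'(\alpha)=h'(\alpha)g(\alpha)$ with $g=f/h$ combine to give $(-1)^{\deg h-1}g(\alpha)$, which is the value of $(-1)^{\deg h}(h(T)-f(T)/h(T))$ in that component. (Two small repairs: local constancy of $\delta_{v}$ is best justified not by ``continuity plus discrete target'' but by the fact that $\delta_{v}$ kills $2J(\mathbb{Q}_{v})$, which is an open subgroup because multiplication by $2$ is a local homeomorphism in characteristic $0$; and one should note $f'(\alpha)\neq 0$, $h'(\alpha)\neq 0$, which hold since $f$ is separable.)

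There is, however, one genuine gap, and it sits exactly at the first assertion of the lemma. You compute $\delta_{v}\bigl(E_{s}-(\deg h)\infty\bigr)=\prod_{\sigma}(a^{\sigma}-T)$, i.e.\ you apply the defining product formula while assigning the factor $1$ to $\infty$. But $\infty$ lies in the support of $\mathrm{div}(y)$ (for odd degree, $y$ has a pole of order $2g+1$ at the unique point at infinity), so the paper's $\delta_{v}$, which is defined only through representatives disjoint from $\mathrm{div}(y)$, does not apply to this divisor; and your justification --- that well-definedness ``lets one delete the base point $\infty$ from any support'' --- is circular, since well-definedness only compares two admissible representatives, whereas the legitimacy of dropping the $\infty$-component is precisely the nontrivial content of the first assertion, not a consequence of it. To close this, you must show that if $\phi$ is a function with a zero or pole at $\infty$, the evaluation of $x-T$ at the finite part of $\mathrm{div}(\phi)$ is still a square in $L_{v}^{\times}$: this is a Weil reciprocity computation with tame symbols at $\infty$, and it is exactly where the hypotheses that $f$ is monic of odd degree enter, because then $x-T$ has a pole of even order $2$ at the single point $\infty$ with leading coefficient $1$ in the parameter $t=x^{g}/y$, so the symbol $\bigl((x-T)t^{2}\bigr)^{\mathrm{ord}_{\infty}\phi}(\infty)\cdot(\phi t^{-\mathrm{ord}_{\infty}\phi})^{2}(\infty)$ is a square. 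Your write-up never visibly uses oddness beyond ``there is one point at infinity,'' yet the analogous naive deletion fails for even-degree models (where $x-T$ has simple poles at the two infinite points and the descent map must be modified); so this step is not a formality. With that computation supplied, your proof is complete.
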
 
For example, we obtain the following lemma.

\begin{lemma} 
\label{Jp01} 
The following two elements of $L_p^{\times}/L_p^{\times 2}$ form an $\mathbb{F}_{2}$-basis of $\mathrm{Im}(\delta_{p})$:
\begin{align*}
(2; -T_2; -T_3),&& (p; T_2; 2). 
\end{align*}
In particular, the following two elements of $I_p(L)/I_p(L)^2$ form an $\mathbb{F}_{2}$-basis of $G_p$: 
\begin{align*}
((1); (T_2); (p, T_3))_p,&& ((p); (T_2); (1))_p.
\end{align*}
\end{lemma}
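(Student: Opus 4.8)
The plan is to evaluate $\delta_p$ explicitly on the two $2$-torsion classes that \cref{2 torsion01} provides as an $\mathbb{F}_2$-basis of $J(\mathbb{Q}_p)[2]$, and then to pin down $\mathrm{Im}(\delta_p)$ by a dimension comparison. Both classes are represented by divisors supported on Weierstrass points, so I would compute their images by the formula of \cref{image of 2-torsion}: the $i$-th component of $\delta_p(D)$ is the value at $x=T_i$ of $(-1)^{\deg h}\bigl(h(x)-f(x)/h(x)\bigr)$, where $h$ is the monic factor of $f$ cutting out the support of $D$ and $f/h$ is read as a polynomial. For a factor $f_i\neq h$ this is $(-1)^{\deg h}h(T_i)$, while for the factor $f_i=h$ (where $h(T_i)=0$) it is $(-1)^{\deg h+1}(f/h)(T_i)$.

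For $(0,0)-\infty$ one takes $h(x)=x$, so the components at $L_p^{(2)}$ and $L_p^{(3)}$ are $-T_2$ and $-T_3$, and the component at $L_p^{(1)}$ is $(f/h)(0)=(0+p)(0+2p)=2p^2\equiv 2$. For $\sum_{x_P^2+p=0}P-2\infty$ one takes $h(x)=x^2+p$, so the components at $L_p^{(1)}$ and $L_p^{(3)}$ are $h(0)=p$ and $h(T_3)=T_3^2+p=-p$, and the component at $L_p^{(2)}$ is $-(f/h)(T_2)=-T_2(T_2^2+2p)=-pT_2$. The step needing genuine care — and the only real obstacle — is the reduction modulo squares inside the quadratic factors, which rests on the relations $T_2^2=-p$ and $T_3^2=-2p$: since $-p=T_2^2$ is a square one gets $-pT_2=T_2^2\cdot T_2\equiv T_2$ in $L_p^{(2)\times}/L_p^{(2)\times 2}$, and since $-2p=T_3^2$ is a square one gets $-p=T_3^2/2\equiv 1/2\equiv 2$ in $L_p^{(3)\times}/L_p^{(3)\times 2}$. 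This reproduces exactly the two claimed elements $(2;-T_2;-T_3)$ and $(p;T_2;2)$.

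To conclude that these form a basis of $\mathrm{Im}(\delta_p)$ I would argue by dimension: both lie in $\mathrm{Im}(\delta_p)$ (being values of $\delta_p$), and \cref{2 torsion01} gives $\dim_{\mathbb{F}_2}\mathrm{Im}(\delta_p)=2$, so it is enough to check $\mathbb{F}_2$-linear independence in $L_p^\times/L_p^{\times 2}$. For this it suffices to project onto the first factor $\mathbb{Q}_p^\times/\mathbb{Q}_p^{\times 2}$, where the two classes become $2$ and $p$; since $p\equiv 3\pmod 8$ (which follows from the hypothesis) makes $2$ a non-square unit in $\mathbb{Q}_p$, the pair $\{2,p\}$ is a basis of $\mathbb{Q}_p^\times/\mathbb{Q}_p^{\times 2}$, so the two elements are independent and hence a basis. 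This is the only point at which the congruence on $p$ enters.

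For $G_p$ I would simply apply $\val_p$ to the basis just found. Since $p$ ramifies in both $L^{(2)}$ and $L^{(3)}$, the elements $T_2$ and $T_3$ are uniformizers at the (unique) primes above $p$, whereas $2$ is a unit; reading off valuations gives $\val_p(2;-T_2;-T_3)=((1);(T_2);(p,T_3))_p$ and $\val_p(p;T_2;2)=((p);(T_2);(1))_p$, which by construction generate $G_p=\val_p(\mathrm{Im}(\delta_p))$. A direct check that neither these nor their sum is trivial in $I_p(L)/I_p(L)^2$ — the first has nontrivial second component $(T_2)$, the second nontrivial first component $(p)$, and their sum retains $(p)$ in the first component — shows they are independent, hence an $\mathbb{F}_2$-basis of $G_p$.
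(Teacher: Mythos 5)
Your proposal is correct and follows essentially the same route as the paper: both evaluate $\delta_p$ on the two $2$-torsion divisors via \cref{image of 2-torsion}, obtain $(2;-T_2;-T_3)$ and $(p;T_2;2)$, and conclude by comparing with $\dim_{\mathbb{F}_2}\mathrm{Im}(\delta_p)=2$ from \cref{2 torsion01}, finishing with the valuation computation for $G_p$. The only (harmless) deviation is the independence check: the paper uses valuations (odd valuation of $\pm T_2$ in the second components, and of $2p$ in the first component of the product), which works for every odd $p$ with no congruence condition, whereas you project to the first component and invoke that $2$ is a non-residue modulo $p\equiv 3\pmod 8$ — both verifications are valid.
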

\begin{proof}
\cref{image of 2-torsion} implies that
\begin{align*}
& \delta_p ((0,0)-\infty)=-T+(T^2+p)(T^2+2p)
= (2;-T_2;-T_3),\\
& \delta_p \left(\sum_{\substack{P \in C(\ol{\mathbb{Q}_p}) \\ x_{P}^{2}+p = 0}}P - 2\infty \right)=(T^2+p)-T(T^2+2p)
= (p;T_2;2).
\end{align*}
Hence, $(2; -T_2; -T_3), (p; T_2; 2) \in \mathrm{Im}(\delta_{p})$.

By \cref{2 torsion01}, it is sufficient to prove that the above two elements are linearly independent.
Since $v_{T_2}(-T_2)=v_{T_2}(T_2)=1$, $(2; -T_2; -T_3)$ and $(p; T_2; 2)$ are non-trivial in $L^{\times}/L^{\times 2}$.
Moreover, since $v_p(2p)=1$, $(2; -T_2; -T_3)(p; T_2; 2)=(2p;-1;-2T_3)$ is non-trivial in $L^{\times}/L^{\times 2}$.
Thus, the lemma holds.
\end{proof}

\begin{lemma} 
\label{J201} 
\begin{enumerate} 

\item Suppose that $p \equiv 3 \pmod{32}$. Then, the following four elements of $L_2^{\times}/L_2^{\times 2}$ form an $\mathbb{F}_{2}$-basis of $\mathrm{Im}(\delta_{2})$:
 \begin{align*}
 (2;-T_2;-T_3), &&  (3;T_2;-3),  && (-1;-1-T_2;-1-T_3), && (-1;4+2T_2;1+2T_3). 
 \end{align*}
\item Suppose that $p \equiv 19 \pmod{32}$. Then, the following four elements  of $L_2^{\times}/L_2^{\times 2}$ form an $\mathbb{F}_{2}$-basis of $\mathrm{Im}(\delta_{2})$:
\begin{align*}
(2;-T_2;-T_3), && (3;T_2;-3), && (-3;5-T_2;5-T_3), && (-1;4+2T_2;1+2T_3). 
 \end{align*}
 In particular, the following two elements of $I_2(L)/I_2(L)^2$ form an $\mathbb{F}_{2}$-basis of $G_2$: 
\begin{align*}
((2);(1);(2,T_3))_2,&& ((1);(2);(1))_2.
 \end{align*}
\end{enumerate}
\end{lemma}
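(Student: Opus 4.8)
The plan is to take as given, from the table in \cref{2 torsion01}, that $\dim_{\mathbb{F}_2}\mathrm{Im}(\delta_2)=4$. Granting this, it suffices to exhibit four elements of $\mathrm{Im}(\delta_2)$ and to verify that their classes in $L_2^{\times}/L_2^{\times 2}$ are $\mathbb{F}_2$-linearly independent; they will then automatically form a basis. I would obtain these four elements from divisor classes of three different shapes.

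The first two come from the two-torsion classes of \cref{2 torsion01}. By \cref{image of 2-torsion} the classes $(0,0)-\infty$ and $\sum_{x_P^2+p=0}P-2\infty$ have images $(2;-T_2;-T_3)$ and $(p;T_2;-p)$; here the congruence $p\equiv 3\pmod{32}$ (resp. $p\equiv 19\pmod{32}$) is exactly what is needed to rewrite the units $p$ and $-p$ modulo squares in the relevant components and so recover $(3;T_2;-3)$. For the third generator I would search for a $\mathbb{Q}_2$-point $P=(x_0,y_0)$ on $C$, so that $\delta_2(P-\infty)=(x_0;x_0-T_2;x_0-T_3)$. A short computation modulo $32$ shows that $f(-1)=-(1+p)(1+2p)$ is a square in $\mathbb{Q}_2$ when $p\equiv 3\pmod{32}$, giving $(-1;-1-T_2;-1-T_3)$, whereas for $p\equiv 19\pmod{32}$ this fails and instead $f(5)=5(25+p)(25+2p)\in\mathbb{Q}_2^{\times2}$, giving $(-3;5-T_2;5-T_3)$. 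This is precisely why the third basis vector differs between the two parts of the lemma.

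The fourth generator $(-1;4+2T_2;1+2T_3)$, common to both parts, does not arise from a single $\mathbb{Q}_2$-point, and realizing it is the step I expect to be the main obstacle. I would instead produce it as $\delta_2(P_1+P_2-2\infty)$ for a Galois-conjugate pair whose $x$-coordinates are the roots of $g(x)=x^2+2x+(p+4)\in\mathbb{Q}_2[x]$: evaluating $g(T)=T^2+2T+(p+4)$ in the three factors of $L_2$ and reducing modulo squares reproduces the stated element. The genuine content is to check that this pair defines a class in $J(\mathbb{Q}_2)$, i.e. that $f(\alpha)$ is a square in $K=\mathbb{Q}_2(\alpha)$ for $\alpha$ a root of $g$; since $-p-3$ has odd valuation, $K=\mathbb{Q}_2(\sqrt{-p-3})$ is a ramified quadratic extension, and verifying this squareness is the most delicate $2$-adic calculation in the argument.

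With the four elements in hand, I would establish their independence by a valuation-and-residue computation in $L_2^{\times}/L_2^{\times2}$: tracking the valuations of the $T_2$- and $T_3$-parts together with the classes modulo $8$ of the unit parts shows that no nontrivial product of the four is a square. For the ``in particular'' assertion I would apply $\val_2$ to this basis; it annihilates $(3;T_2;-3)$ entirely and identifies the images of the third and fourth generators, so that only two independent ideal classes remain, namely $((2);(1);(2,T_3))_2$ coming from $(2;-T_2;-T_3)$ and $((1);(2);(1))_2$ coming from the third (equivalently fourth) generator. This yields the claimed basis of $G_2$.
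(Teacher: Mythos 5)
Your proposal has the same skeleton as the paper's proof: take $\dim_{\mathbb{F}_2}\mathrm{Im}(\delta_2)=4$ from \cref{2 torsion01}, exhibit four elements of $\mathrm{Im}(\delta_2)$, check their $\mathbb{F}_2$-independence, and then obtain the basis of $G_2$ by applying $\val_2$. Your first three generators and your $G_2$ computation coincide with the paper's. Where you genuinely differ is the fourth generator: the paper realizes $(-1;4+2T_2;1+2T_3)$ as $\delta_2(R+\ol{R}-2\infty)$ for the conjugate pair with $x_R+x_{\ol{R}}=6$, $x_Rx_{\ol{R}}=7$, defined over $\mathbb{Q}_2(\sqrt{2})$, and proves this pair lies on $C$ by checking $f(1+\tau+\tau^2)\equiv(\tau+\tau^2+\tau^3)^2\pmod{\tau^7}$ in $\mathbb{Q}_2[\tau]/(\tau^2-2)$ and invoking Hensel's lemma; you instead take the pair whose $x$-coordinates are the roots of $g(x)=x^2+2x+(p+4)$, defined over $K=\mathbb{Q}_2(\sqrt{-p-3})$. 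That is a legitimate alternative, with the pleasant features that $\delta_2$ of your pair is literally $g(T)$ (so its $L_2^{(2)}$-component is exactly $4+2T_2$, with no mod-squares manipulation), and that one and the same pair serves both congruence classes.

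The genuine gap is the step you name and then do not perform: you never verify that $f(\alpha)$ is a square in $K$ for $\alpha=-1+\beta$, $\beta=\sqrt{-p-3}$, i.e., that your pair actually defines an element of $J(\mathbb{Q}_2)$. This is not a deferrable detail; it is the only nontrivial content in producing the fourth generator, and without it you have exhibited only three elements of $\mathrm{Im}(\delta_2)$, so the dimension count cannot close and the lemma is unproven. (It is exactly the step that the paper's $\tau$-computation discharges for its own pair.) Fortunately your claim is true and can be proved uniformly for $p\equiv 3\pmod{16}$, which covers both cases: one computes $f(\alpha)=2(p+4)(p-2-2\beta)$, and since $\beta^2=-(p+3)$ this equals $\beta^2\,c\,\bigl(3(p-2)-6\beta\bigr)$ with $c=-2(p+4)/\bigl(3(p+3)\bigr)\equiv 1\pmod{8}$ a square in $\mathbb{Q}_2$; moreover $3(p-2)-6\beta=(3-\beta)^2+4(p-3)$, and since $v_2(p-3)\geq 4$ while $(3-\beta)^2$ is a unit of $K$, the ratio $\bigl(3(p-2)-6\beta\bigr)/(3-\beta)^2$ is a unit congruent to $1$ modulo $4\pi$ for a uniformizer $\pi$ of the ramified field $K$, hence a square; so $f(\alpha)\in K^{\times 2}$. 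Two lesser soft spots: the independence of your four elements is only asserted via ``valuations and residues'' (the paper isolates the unique possible relation using first components and kills it by an explicit computation modulo $T_3^2$; some such computation must actually appear), and the congruence modulo $32$ is not what identifies $(p;T_2;-p)$ with $(3;T_2;-3)$ --- $p\equiv 3\pmod{8}$ already does that --- its true role, which you do use correctly afterwards, is to decide which of $f(-1)$, $f(5)$ is a square in $\mathbb{Q}_2$.
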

 
\begin{proof}
\begin{enumerate}
\item
First, we show that the above four elements actually lie in $\mathrm{Im}(\delta_{2})$. Indeed,
\cref{image of 2-torsion} implies that
 \begin{align*}
& \delta_2 ((0,0)-\infty) =-T+(T^2+p)(T^2+2p) = (2;-T_2;-T_3),\\
 & \delta_2 \left(\sum_{\substack{P \in C(\ol{\mathbb{Q}_2}) \\ x_{P}^{2}+p = 0}}P-2\infty \right) =(T^2+p)-T(T^2+2p)=(3;T_2;-3).\\
    \end{align*} 
Hence, $(2;-T_2;-T_3)$, $(3;T_2;-3) \in \mathrm{Im}(\delta_{2})$.
Moreover, since $f(-1) \equiv 4 \pmod{32}$, there exists $Q \in C(\mathbb{Q}_2)$ such that $x_Q=-1$, and \[(-1;-1-T_2;-1-T_3)= \delta_2 (Q-\infty)
 \in \mathrm{Im}(\delta_{2}).\]
Finally, since $2=\tau^2$ in $\mathbb{Q}_2[\tau]/(\tau^2-2)$ and $p \equiv 1+\tau^2 \pmod{{\tau}^7}$, 
we can check that  \[f(1+\tau+\tau^2) \equiv (\tau+\tau^2+\tau^3)^2 \pmod{\tau^7}, 
\]
hence $f(1+\tau+\tau^2)$ is square in $\mathbb{Q}_2[\tau]/(\tau^2-2)$ by using Hensel's lemma. 
Therefore, we see that there exist $R$, $\ol{R} \in C(\ol{\mathbb{Q}_2})$ such that $x_R+x_{\ol{R}}=6$, $x_Rx_{\ol{R}}=7$, and
$R+\ol{R}-2 \infty$ defines an element of $J(\mathbb{Q}_2)$. 
\fn{We can take each element of $J(\mathbb{Q}_2)$ in one of the seven quadratic extensions of $\mathbb{Q}_2$.
} 
Thus,
\begin{align*}
 (-1;4+2T_2; 1+2T_3)=\delta_2(R+\ol{R}-2 \infty) \in \mathrm{Im}(\delta_{2}).
\end{align*}

By \cref{2 torsion01}, it is sufficient to prove that the above four elements are linearly independent.
By taking their first components into account, we see that they are non-trivial in $L^{\times}/L^{\times 2}$,
and the only possible relation is \[(-1;-1-T_2;-1-T_3)  (-1;4+2T_2;1+2T_3)=\1.\]
However, this is impossible because \[(-1-T_3)(1+2T_3) \equiv 1+T_3 \not\equiv 1 \pmod{T_3^2}.\]
Therefore, we obtain the assertion.

\item
The proof is similar to $(1)$. 
\cref{image of 2-torsion} implies that $(2;-T_2;-T_3)$, $(3;T_2;-3) \in \mathrm{Im}(\delta_{2})$. 
Since $f(5) \equiv 4 \pmod{32}$, there exists $Q \in C(\mathbb{Q}_2)$ such that $x_Q=5$, and $ (-3;5-T_2;5-T_3)= \delta_2 (Q-\infty)
 \in \mathrm{Im}(\delta_{2})$. Moreover, $(-1;4+2T_2;1+2T_3) \in \mathrm{Im}(\delta_{2})$ by the proof of $(1)$.

By \cref{2 torsion01}, it is sufficient to prove that the above four elements are linearly independent.
By taking their first components into account, they are non-trivial in $L^{\times}/L^{\times 2}$, and only possible relation is \[(3;T_2;-3) (-3;5-T_2;5-T_3)  (-1;4+2T_2;1+2T_3)=\1.\]  
However, it is impossible because
\[-p(5-T_3)(1+2T_3) \equiv 1+T_3 \not\equiv 1  \pmod{T_3^2}.\]
Therefore, we obtain the assertion.
\end{enumerate}

By the first statement, to prove the second, it is sufficient to show that
\[\begin{cases}
\val_2(2;-T_2;-T_3)=((2);(1);(2,T_3))_2,\\
\val_2(3;T_2;-3)=\1_2,\\
\val_2(-1;-1-T_2;-1-T_3)=((1);(2);(1))_2,\\
\val_2(-1;4+2T_2;1+2T_3)=((1);(2);(1))_2,\\
\val_2(-3;5-T_2;5-T_3)=((1);(2);(1))_2.
\end{cases} \]
The first components are obvious. 
Since $p \equiv 3 \pmod{8}$, we can calculate the second components as follows.
\begin{itemize}
\item $v_2(T_2)=v_2(-T_2)=0$.
\item  Since $v_2(N(-1-T_2))=v_2(4)=2$ and $v_2(\tr(-1-T_2))=v_2(-2)=1$, we have $v_2(-1-T_2)=1$.
\item Since $v_2(N(4+2T_2))=v_2(28)=2$ and $v_2(\tr(4+2T_2))=v_2(8)=3$, we have $v_2(4+2T_2)=1$.
\item Since $v_2(N(5-T_2))=v_2(28)=2$ and $v_2(\tr(5-T_2))=v_2(10)=1$, we have $v_2(5-T_2)=1$.
\end{itemize}
Since $T_3$ is a uniformizer in $L^{(3)}_2$, we obtain the third components.
\end{proof}

\begin{remark} \label{two cases}
In fact, we can relate the above two cases in a more direct manner as follows:
Let 
\begin{align*}
M_2 :=\mathbb{Q}_2[\sigma]/(\sigma(\sigma^2+3)(\sigma^2+6)), && L_2:=\mathbb{Q}_2[\tau]/(\tau(\tau^2+19)(\tau^2+38)).
\end{align*}
Then, we have an isomorphism
$M_2 \overset{\simeq}{\to} L_2; \sigma \mapsto \tau/u^2$ of $\mathbb{Q}_2$-algebras, where $u \in \mathbb{Z}_2^{\times}$ is taken so that $19=3u^4$.
It induces an isomorphism $M_2^{\times}/M_2^{\times 2} \overset{\simeq}{\to} L_2^{\times}/L_2^{\times 2}$ which maps $(-1;-1-\sigma_2;-1-\sigma_3)$ to $-9-\tau=(3;\tau_2;-3)(-3;5-\tau_2;5-\tau_3)$, where
$\sigma_2$, $\sigma_3$ (resp. $\tau_2$, $\tau_3$) are the images of $\sigma$ (resp. $\tau$) in 
$\mathbb{Q}_2[\sigma]/(\sigma^2+3)$, $\mathbb{Q}_2[\sigma]/(\sigma^2+6)$ (resp. $\mathbb{Q}_2[\tau]/(\tau^2+19)$, $\mathbb{Q}_2[\tau]/(\tau^2+38)$).
\end{remark}

The following proposition is essentially due to Gauss' genus theory. 

\begin{proposition} [{cf. \cite[Theorem $2$]{Morton}, \cite[Proposition 3.11, Theorem 5.30]{Cox}}] \label{Genus theory} 
Let $K$ be an imaginary quadratic field of discriminant $D$.
\begin{enumerate}
\item The class of an ideal $\mathfrak{a}$ of $K$ lies in $\Cl(K)^2$ if and only if the equation $N(\mathfrak{a})x^2+Dy^2=z^2$ has a primitive $\mathbb{Z}_l$-solution 
for every prime number $l$ dividing $D$.
\item Let $r$ be the number of odd prime numbers dividing $D$.
Then, we have
\[ \# \Cl(K)[2]=\begin{cases}
2^{r-1} & D \equiv 1 \pmod{4}, \\
2^r & D \equiv 0 \pmod{4}. 
\end{cases}\]
\end{enumerate}
\end{proposition}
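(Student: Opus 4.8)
The plan is to deduce both statements from the classical genus theory of binary quadratic forms, so that the argument reduces to invoking \cite{Cox} and \cite{Morton} together with a short bookkeeping of prime discriminants. First I would fix the standard dictionary identifying $\Cl(K)$ with the group of proper equivalence classes of primitive positive-definite integral binary quadratic forms of discriminant $D$; since every ideal class contains an integral ideal $\mathfrak{a}$ with $N(\mathfrak{a})$ coprime to $D$, I may assume $\mathfrak{a}$ has this shape, and it then corresponds to a form $[a, b, c]$ with leading coefficient $a = N(\mathfrak{a})$. Under this dictionary the subgroup $\Cl(K)^2$ is precisely the principal genus, and the quotient $\Cl(K)/\Cl(K)^2$ is the genus group.

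For part (1), I would invoke the principal genus theorem: the class of $\mathfrak{a}$ lies in $\Cl(K)^2$ if and only if every genus character is trivial on it. Writing $n = N(\mathfrak{a})$, the genus character attached to an odd prime $l \mid D$ is the Legendre symbol $\left(\frac{n}{l}\right)$, which by a direct $l$-adic computation (using $l \,\|\, D$ and that $n$ is an $l$-adic unit) equals the Hilbert symbol $(n, D)_l$; the analogous identification at $l = 2$, when $2 \mid D$, matches the dyadic genus character with $(n, D)_2$. On the other hand, the conic $N(\mathfrak{a})x^2 + Dy^2 = z^2$ admits a nontrivial $\mathbb{Q}_l$-point — equivalently, a primitive $\mathbb{Z}_l$-solution — exactly when $(n, D)_l = 1$, by the defining property of the Hilbert symbol. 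Matching these two descriptions prime by prime over $l \mid D$ yields the stated criterion, and I would note that the criterion is genuinely local at the primes dividing $D$, so no global reciprocity input is needed here.

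For part (2), I would use the genus number formula $\#(\Cl(K)/\Cl(K)^2) = 2^{\mu - 1}$, where $\mu$ is the number of prime discriminants in the factorization $D = \prod_i d_i$ of $D$ into prime discriminants ($d_i \in \{-4, \pm 8\}$, or $d_i = (-1)^{(l-1)/2} l$ for odd $l$). Since $\Cl(K)$ is finite abelian, the squaring map gives $\#\Cl(K)[2] = \#(\Cl(K)/\Cl(K)^2)$, so it remains to compute $\mu$: if $D \equiv 1 \pmod 4$, then $D$ is odd and squarefree and $\mu = r$, giving $\#\Cl(K)[2] = 2^{r-1}$; if $D \equiv 0 \pmod 4$, the $2$-part of $D$ contributes exactly one of $-4, 8, -8$, so $\mu = r + 1$ and $\#\Cl(K)[2] = 2^r$. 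I expect the only delicate point to be the bookkeeping at the prime $2$: checking that the dyadic Hilbert symbol condition in part (1) really coincides with the genus character at $2$, and counting the single $2$-adic prime discriminant correctly according to the residue of $D$ modulo $4$ (and, for the value of $d_i$ among $\pm 8$, modulo $8$).
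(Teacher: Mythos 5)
The paper itself offers no proof of this proposition---it is quoted from Morton and Cox with the remark that it is ``essentially due to Gauss' genus theory''---so your reconstruction via the dictionary with binary quadratic forms, the principal genus theorem, and the genus number formula is exactly the intended route. Part (2) is correct as written: the genus number formula $\#(\Cl(K)/\Cl(K)^2)=2^{\mu-1}$, the equality $\#\Cl(K)[2]=\#(\Cl(K)/\Cl(K)^2)$ for a finite abelian group, and your count of prime discriminants in the two congruence cases are all fine. In part (1), the identification of the genus characters with Hilbert symbols $(n,D)_l$ is also sound; at $l=2$ it works because every odd prime discriminant is $\equiv 1 \pmod 4$, so the cross terms $(n,d_i)_2$ for the odd prime discriminants $d_i$ all vanish and $(n,D)_2$ really is the classical dyadic character---this deserves to be said explicitly, but it is not a gap.

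The genuine gap is your opening reduction: ``since every ideal class contains an integral ideal with norm coprime to $D$, I may assume $\mathfrak{a}$ has this shape.'' The proposition is an assertion about the given ideal $\mathfrak{a}$ and the conic built from $N(\mathfrak{a})$ itself; replacing $\mathfrak{a}$ by an equivalent ideal $\mathfrak{a}'$ changes the equation, and nothing in your argument shows that the solvability conditions for the two equations agree. As written, you have proved the proposition only for ideals with $\gcd(N(\mathfrak{a}),D)=1$, and that restricted statement is insufficient for this paper: its one application, \cref{2Cl}, takes $\mathfrak{a}=(2,T_3)$ in $\mathbb{Q}(\sqrt{-2p})$, where $N(\mathfrak{a})=2$ and $D=-8p$, so $l=2$ divides both $N(\mathfrak{a})$ and $D$. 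The missing step is an invariance lemma: if $\mathfrak{a}=(\gamma)\mathfrak{a}'$, then $N(\mathfrak{a})=N_{K/\mathbb{Q}}(\gamma)\,N(\mathfrak{a}')$ with $N_{K/\mathbb{Q}}(\gamma)>0$, and $N_{K/\mathbb{Q}}(\gamma)$, being a global norm from $K=\mathbb{Q}(\sqrt{D})$, is a local norm at every place, i.e.\ $\left(N_{K/\mathbb{Q}}(\gamma),D\right)_v=1$ for all $v$ (at split places every unit of $\mathbb{Q}_v$ is a norm since $D$ is a square there). Hence $\left(N(\mathfrak{a}),D\right)_l=\left(N(\mathfrak{a}'),D\right)_l$ for every $l$, so the Hilbert-symbol criterion---equivalently, primitive $\mathbb{Z}_l$-solvability of the conic---depends only on the ideal class, and your coprime-norm case does imply the general statement. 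With that lemma inserted, the proof is complete.
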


\begin{corollary} \label{2Cl}
The class of $(2,T_3)$ does not lie in $\Cl(L^{(3)})^2$. 
In particular, $\Cl(L^{(3)})[2]$ is generated by $(2,T_3)$.
\end{corollary}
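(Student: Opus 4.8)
The plan is to realize $L^{(3)} = \mathbb{Q}[T_3]/(T_3^2+2p)$ as the imaginary quadratic field $K := \mathbb{Q}(\sqrt{-2p})$, and then to feed the ideal $\mathfrak{a} := (2, T_3)$ into \cref{Genus theory}. Since $p \equiv 3 \pmod{16}$ is odd, we have $-2p \equiv 2 \pmod 4$, so $-2p$ is squarefree and $\equiv 2 \pmod 4$; hence the ring of integers is $\mathbb{Z}[T_3]$ and the discriminant of $K$ is $D = -8p$. Note that $D$ is divisible by exactly one odd prime, namely $p$, and satisfies $D \equiv 0 \pmod 4$. These two features are precisely the inputs needed for the two parts of \cref{Genus theory}.

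First I would pin down the norm of $\mathfrak{a}$. Expanding $\mathfrak{a}^2 = (4, 2T_3, T_3^2) = (4, 2T_3, -2p)$ and using that $\gcd(4, 2p) = 2$ because $p$ is odd, one sees that $2 \in \mathfrak{a}^2$ and hence $\mathfrak{a}^2 = (2)$; taking norms gives $N(\mathfrak{a}) = 2$. In particular $[\mathfrak{a}]^2 = 1$ in $\Cl(L^{(3)})$, so the class of $\mathfrak{a}$ already lies in $\Cl(L^{(3)})[2]$, a fact I will reuse for the second assertion.

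The heart of the argument is \cref{Genus theory}(1) applied at the odd prime $l = p$ dividing $D$: I claim the conic $N(\mathfrak{a})x^2 + Dy^2 = z^2$, i.e. $2x^2 - 8py^2 = z^2$, has no primitive $\mathbb{Z}_p$-solution, which already forces $[\mathfrak{a}] \notin \Cl(L^{(3)})^2$. Indeed, reducing modulo $p$ yields $z^2 \equiv 2x^2 \pmod p$; since $p \equiv 3 \pmod 8$, the integer $2$ is not a square modulo $p$, so necessarily $x \equiv z \equiv 0 \pmod p$, and primitivity then forces $p \nmid y$. Writing $x = p x_1$, $z = p z_1$ and dividing the equation by $p$ gives $2p x_1^2 - 8y^2 = p z_1^2$, whose reduction modulo $p$ reads $-8y^2 \equiv 0 \pmod p$ and hence $p \mid y$, a contradiction. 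I expect this descent-by-one-power-of-$p$ step to be the only place that demands genuine care, since it is exactly where the quadratic non-residue property of $2$ modulo $p$ is used; everything else is bookkeeping.

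Finally, for the ``in particular'' clause I would invoke \cref{Genus theory}(2): with $r = 1$ and $D \equiv 0 \pmod 4$ it gives $\#\Cl(L^{(3)})[2] = 2$. As $[\mathfrak{a}]$ is a nontrivial element of this group of order two, nontrivial because the trivial class lies in $\Cl(L^{(3)})^2$ whereas $[\mathfrak{a}]$ does not, it must generate $\Cl(L^{(3)})[2]$, which completes the proof.
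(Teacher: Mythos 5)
Your proof is correct, and it is built on the same key lemma as the paper's proof: \cref{Genus theory} applied to the ideal $\mathfrak{a}=(2,T_3)$ and the conic $2x^2-8py^2=z^2$, followed by \cref{Genus theory}(2) together with $\mathfrak{a}^2=(2)$ for the ``in particular'' clause. The genuine difference is the prime at which you detect the local obstruction. The paper works at $l=2$: it performs a three-step $2$-adic descent and reaches a contradiction from the mod-$8$ behaviour of $x'^2-py^2$ when $p\equiv 3\pmod 8$ and $y\in\mathbb{Z}_2^{\times}$. You instead work at $l=p$: since $p\equiv 3\pmod 8$ forces $\left(\tfrac{2}{p}\right)=-1$, the reduction $z^2\equiv 2x^2\pmod p$ gives $p\mid x$ and $p\mid z$, and a single division by $p$ then gives $p\mid y$, contradicting primitivity. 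Both verifications are valid inputs to \cref{Genus theory}(1), since non-solvability at any one prime dividing $D=-8p$ suffices to exclude $[\mathfrak{a}]$ from $\Cl(L^{(3)})^2$. Your $p$-adic computation is shorter and makes the arithmetic hypothesis more transparent (it is exactly the statement that $2$ is a quadratic non-residue modulo $p$), while the paper's $2$-adic descent uses the same hypothesis $p\equiv 3\pmod 8$ in a less direct way; note that either argument also covers the later reuse of this corollary for $L^{(2)}=\mathbb{Q}[T_2]/(T_2^2+2p)$ with $p\equiv 11\pmod{16}$ (in \cref{V11,W11}), because only the condition $p\equiv 3\pmod 8$ is ever used. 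Your remaining steps --- the identification of the ring of integers and discriminant, the computation $\mathfrak{a}^2=(2)$, $N(\mathfrak{a})=2$, and the counting argument $\#\Cl(L^{(3)})[2]=2$ with $r=1$, $D\equiv 0\pmod 4$ --- agree with the paper.
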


\begin{proof}
By \cref{Genus theory} $(1)$, it is sufficient to prove that $2x^2-8py^2=z^2$ has no primitive $\mathbb{Z}_2$-solution.
We prove it by contradiction.

Suppose that $2x^2-8py^2=z^2$  has a primitive $\mathbb{Z}_2$-solution $(x,y,z)$. Then, we have
\begin{itemize}
\item $x^2-4py^2=2z'^2$ for some $z' \in \mathbb{Z}_2$, hence
\item $2x'^2-2py^2=z'^2$ for some $x' \in \mathbb{Z}_2$, hence 
\item $x'^2-py^2=2z''^2$ for some $z'' \in \mathbb{Z}_2$.
\end{itemize}
Since $p \equiv 3 \pmod{8}$ and $y \in \mathbb{Z}_2^{\times}$, $x'^2-py^2$ is congruent to $1$, $5$ or $6 \pmod{8}$. However, $2z''^2$ 
is congruent to $0$ or $2 \pmod{8}$, a contradiction.
Therefore, the first statement follows. The second statement follows from \cref{Genus theory} $(2)$ and $(2,T_3)^2=(2)$ as ideals.
\end{proof}

\begin{corollary} \label{V01}
The following four elements of $L^{\times}/ L^{\times 2}$ form an $\mathbb{F}_{2}$-basis of $\Ker(\val)$: 
 \begin{align*}
 (1;1;-1), && (1;-1;1), && (-1;1;1), && (1;1;2).
 \end{align*}
\end{corollary}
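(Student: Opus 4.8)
The plan is to identify $\Ker(\val)$ through the standard exact sequence relating it to the unit group and the $2$-torsion of the class group of $L$. Since $\val = \prod_v \val_v \circ \res_v$ assembles, component by component, the fractional ideal generated by an element modulo squares, an element $\alpha = (\alpha_1; \alpha_2; \alpha_3) \in L^{\times}/L^{\times 2}$ lies in $\Ker(\val)$ precisely when each $(\alpha_i)$ is a square fractional ideal of $L^{(i)}$. Writing $\mathcal{O}_L^{\times} = \prod_{i} \mathcal{O}_{L^{(i)}}^{\times}$, I would first establish the exact sequence
\[ 1 \to \mathcal{O}_L^{\times}/\mathcal{O}_L^{\times 2} \to \Ker(\val) \to \Cl(L)[2] \to 0, \]
where the right-hand map sends $\alpha$ with $(\alpha) = \mathfrak{b}^2$ to the class of $\mathfrak{b}$. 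Injectivity on the left is immediate (a unit that is a square in $L^{\times}$ is the square of a unit, since the ideal group is torsion-free), and surjectivity on the right holds because every $2$-torsion ideal class is represented by some $\mathfrak{b}$ with $\mathfrak{b}^2$ principal.

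Second, I would compute the two outer dimensions. Each factor $L^{(1)} = \mathbb{Q}$, $L^{(2)} = \mathbb{Q}(\sqrt{-p})$, $L^{(3)} = \mathbb{Q}(\sqrt{-2p})$ has finite unit group containing $-1$ as a nonsquare, so $\mathcal{O}_{L^{(i)}}^{\times}/\mathcal{O}_{L^{(i)}}^{\times 2}$ is $1$-dimensional over $\mathbb{F}_2$ (this persists even for $p = 3$, where the extra roots of unity do not change the mod-squares dimension), whence $\dim_{\mathbb{F}_2} \mathcal{O}_L^{\times}/\mathcal{O}_L^{\times 2} = 3$, generated by the three sign vectors $(-1;1;1)$, $(1;-1;1)$, $(1;1;-1)$. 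For the class group, $\Cl(\mathbb{Q})$ is trivial, while \cref{Genus theory}$(2)$ gives $\# \Cl(L^{(2)})[2] = 2^{1-1} = 1$ (here $\disc = -p \equiv 1 \pmod{4}$ with a single odd prime divisor) and $\# \Cl(L^{(3)})[2] = 2^{1} = 2$ (here $\disc = -8p \equiv 0 \pmod{4}$ with a single odd prime divisor); by \cref{2Cl} this $2$-torsion is generated by $(2, T_3)$. Hence $\dim_{\mathbb{F}_2} \Cl(L)[2] = 1$ and $\dim_{\mathbb{F}_2} \Ker(\val) = 3 + 1 = 4$.

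Third, I would check that the four listed elements lie in $\Ker(\val)$ and are independent. The first three are units, hence lie in the kernel and exhaust $\mathcal{O}_L^{\times}/\mathcal{O}_L^{\times 2}$; they are visibly $\mathbb{F}_2$-independent since each is supported on a single component. For $(1;1;2)$, the relation $(2) = (2, T_3)^2$ in $L^{(3)}$ shows that $(2)$ is a square ideal, so $(1;1;2) \in \Ker(\val)$, and its image under $\Ker(\val) \to \Cl(L)[2]$ is the class of $(2, T_3)$, which is nontrivial by \cref{2Cl}. Since the three sign vectors map to $0$ in $\Cl(L)[2]$ whereas $(1;1;2)$ does not, the four elements are linearly independent, and therefore form an $\mathbb{F}_2$-basis of the $4$-dimensional space $\Ker(\val)$.

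The only delicate points are the clean setup of the exact sequence—in particular verifying that $\val$, although assembled from the local maps $\val_v \circ \res_v$, really detects the square-ideal condition component by component—and the correct bookkeeping of discriminants in the genus-theory computation. Once these are in place, the membership check and the independence argument are routine.
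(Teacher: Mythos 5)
Your proposal is correct and follows essentially the same route as the paper: the same exact sequence $1 \to \mathcal{O}_L^{\times}/\mathcal{O}_L^{\times 2} \to \Ker(\val) \to \Cl(L)[2] \to 0$ (which the paper obtains by a snake-lemma diagram chase, while you verify it directly), the same unit-group basis of sign vectors, and the same class-group input from \cref{Genus theory} and \cref{2Cl} showing that $(1;1;2)$ maps to the nontrivial class of $(2,T_3)$. Your explicit dimension count $3+1=4$ and the remark about $p=3$ (where extra roots of unity in $\mathbb{Q}(\sqrt{-3})$ do not affect the mod-squares dimension) are welcome additions but do not change the argument.
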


\begin{proof}
We have the following commutative diagram.
 \[ \xymatrix@R+2em@C+2em{
 & \mathcal{O}_L^{\times} \ar[r]^{2}
  \ar[d] &   \mathcal{O}_L^{\times}  \ar[r] \ar[d]  & \Ker(\val) \ar[d] & \\
& L^{\times} \ar[r]^{2} \ar[d]^{\val} &  L^{\times} \ar[r] \ar[d]^{\val}  & L^{\times}/L^{\times 2}  \ar[d]^{\val}
 \ar[r] & 0\\
0 \ar[r] & I(L) \ar[r]^{2} \ar[d] &  I(L) \ar[r] \ar[d]  & I(L)/{I(L)}^{2}  \ar[r]& 0\\
&  \Cl(L)  \ar[r]^{2} & \Cl(L)  &&} \]
By applying the snake lemma, we obtain a short exact sequence 
\[1 \to \mathcal{O}_L^{\times}/\mathcal{O}_L^{\times 2} \to \Ker(\val) \to \Cl(L)[2] \to 0.\] 
First, $(1;1;-1)$, $(1;-1;1)$ and $(-1;1;1)$ form an $\mathbb{F}_2$-basis of $\mathcal{O}_L^{\times}/\mathcal{O}_L^{\times 2}$.
Since the ring of integers of $L^{(1)} \simeq \mathbb{Q}$ is a PID, we have $\Cl(L^{(1)})[2]=0$.
By \cref{Genus theory} $(2)$, we have $\Cl(L^{(2)})[2]=0$. 
\cref{2Cl} implies that $\Cl(L^{(3)})[2]$ is generated by the class of $(2,T_3)$.
Therefore, by a diagram chasing, we can check that the image of $(1;1;2)$ in $\Cl(L)[2] $ is non-trivial.
This completes the proof. 
\end{proof}

\begin{lemma} \label{W01} 
The following three elements of $I(L)/I(L)^2$ form an $\mathbb{F}_{2}$-basis of $W$: 
\begin{align*}
((1);(2);(1))_2 \times \1_p, &&
\1_2 \times ((p);(T_2);(1))_p, &&
((2);(1);(2,T_3))_2 \times ((1);(T_2);(p,T_3))_p.
\end{align*}
\end{lemma}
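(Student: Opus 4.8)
The plan is to compute the homomorphism $G \to \Cl(L)/\Cl(L)^2$ explicitly on a basis of $G$ and then read off its kernel. First I would assemble a basis of $G = G_2 \times G_p$ from the preceding lemmas. By \cref{J201} the group $G_2$ has $\mathbb{F}_2$-basis $((2);(1);(2,T_3))_2$ and $((1);(2);(1))_2$, and by \cref{Jp01} the group $G_p$ has $\mathbb{F}_2$-basis $((1);(T_2);(p,T_3))_p$ and $((p);(T_2);(1))_p$. Hence $G$ is four-dimensional over $\mathbb{F}_2$, spanned by these four elements, viewed inside $I(L)/I(L)^2$ via the identification of $I_2(L) \times I_p(L)$ with its image. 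Since $W = \Ker(G \to \Cl(L)/\Cl(L)^2)$, it suffices to determine the target and the images of these four generators.

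Next I would pin down $\Cl(L)/\Cl(L)^2 = \prod_{i=1}^{3} \Cl(L^{(i)})/\Cl(L^{(i)})^2$. The first factor vanishes since $L^{(1)} \simeq \mathbb{Q}$ is a PID. For $L^{(2)} \simeq \mathbb{Q}(\sqrt{-p})$, since $-p \equiv 1 \pmod 4$ has discriminant $D = -p$ with a single odd prime divisor, \cref{Genus theory} $(2)$ gives $\# \Cl(L^{(2)})[2] = 2^{1-1} = 1$, so this factor also vanishes. By \cref{2Cl}, $\Cl(L^{(3)})[2]$ is generated by the class of the ramified prime $(2,T_3)$ above $2$, so $\Cl(L^{(3)})/\Cl(L^{(3)})^2 \cong \mathbb{F}_2$. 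Therefore $\Cl(L)/\Cl(L)^2 \cong \mathbb{F}_2$, and only the third component of each ideal contributes to the map.

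Then I would evaluate the map on each generator, working entirely in the third component $L^{(3)} \simeq \mathbb{Q}(\sqrt{-2p})$. Every principal local ideal, namely those generated by $2$, $p$, or $T_2$, maps to the trivial class, so $((1);(2);(1))_2 \times \1_p$ and $\1_2 \times ((p);(T_2);(1))_p$ already lie in $W$. The ideal $(2,T_3)$ is the ramified prime $\mathfrak{p}_2$ above $2$, whose class is the nontrivial generator, so $((2);(1);(2,T_3))_2 \times \1_p$ maps to the generator. The ideal $(p,T_3)$ is the ramified prime $\mathfrak{q}$ above $p$, and here lies the key computation: the principal ideal $(T_3)$ has norm $2p$ and factors as $(T_3) = \mathfrak{p}_2 \mathfrak{q}$, whence $[\mathfrak{q}] = [\mathfrak{p}_2]^{-1} = [\mathfrak{p}_2]$ in $\Cl(L^{(3)})$, again the nontrivial generator. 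Thus $\1_2 \times ((1);(T_2);(p,T_3))_p$ also maps to the generator, and the map $G \to \mathbb{F}_2$ sends the first and third generators to the generator and the second and fourth to $0$.

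Finally, the kernel $W$ is spanned by the second generator, the fourth generator, and the product of the first and third generators, the last of which is exactly $((2);(1);(2,T_3))_2 \times ((1);(T_2);(p,T_3))_p$. Since these three elements are visibly independent in $G$ and $W$ has dimension $4 - 1 = 3$, they form a basis, which is precisely the claimed one. I expect the only delicate point to be the identification of the classes of $(p,T_3)$ and $(2,T_3)$ in $\Cl(L^{(3)})/\Cl(L^{(3)})^2$ via the principal relation $(T_3) = (2,T_3)(p,T_3)$; the remaining steps are bookkeeping of which local ideals are principal and of the assembly of local ideal data into a global class.
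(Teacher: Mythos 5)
Your proof is correct and takes essentially the same route as the paper: both start from the four-element basis of $G$ given by \cref{Jp01,J201}, invoke \cref{2Cl} for the nontriviality of the class of $(2,T_3)$, and use the principality of $(T_3)=(2,T_3)(p,T_3)$ to identify which combinations become trivial in $\Cl(L)/\Cl(L)^2$. The paper organizes this as ``the three listed elements lie in $W$, are independent, and $W \neq G$,'' whereas you compute the full map $G \to \Cl(L)/\Cl(L)^2 \cong \mathbb{F}_2$ on the basis and read off its kernel; the mathematical content is the same.
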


\begin{proof}
By \cref{Jp01,J201}, the following four elements of $I(L)/I(L)^2$ form an $\mathbb{F}_{2}$-basis of $G$: 
\begin{align*}
 ((1);(2);(1))_2 \times \1_p, && ((2);(1);(2,T_3))_2 \times \1_p,&&  \1_2 \times ((p);(T_2);(1))_p,&& \1_2 \times ((1);(T_2);(p,T_3))_p.
 \end{align*}
Since the images of 
\begin{align*}
((1);(2);(1))_2 \times \1_p,&& \1_2 \times ((p);(T_2);(1))_p,&& ((2);(1);(2,T_3))_2 \times ((1);(T_2);(p,T_3))_p 
\end{align*}
in $\Cl(L)$ are trivial, the three elements in the statement lie in $W$. 
Moreover, we see that they are linearly independent.
Finally, by \cref{2Cl}, we have
$((2);(1);(2,T_3))_2 \times \1_p \not\in W$.
Therefore, we obtain the assertion.
\end{proof}

\begin{lemma} \label{H01}
The following seven elements of $L^{\times}/L^{\times 2}$ form an $\mathbb{F}_{2}$-basis of ${\val}^{-1}(G)$: 
\begin{align*}
(1;1;-1), && (1;-1;1),&& (-1;1;1),&& (1;1;2), && (1;2;1), && (p;T_2;1),&& (2;T_2;T_3) .
\end{align*}
\end{lemma}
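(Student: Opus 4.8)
The plan is to realize $\val^{-1}(G)$ as an extension of $W$ by $\Ker(\val)$ and to match the seven listed elements against the bases already produced in \cref{V01,W01}. The key observation is that the global map $\val\colon L^\times/L^{\times2}\to I(L)/I(L)^2$ has image exactly $\Ker\!\left(I(L)/I(L)^2\to \Cl(L)/\Cl(L)^2\right)$: a class $[\mathfrak a]$ lies in $\mathrm{Im}(\val)$ if and only if $\mathfrak a$ is principal modulo squares of ideals, which is precisely the condition that $[\mathfrak a]$ become trivial in $\Cl(L)/\Cl(L)^2$. This is read off from the middle and bottom rows of the snake-lemma diagram in the proof of \cref{V01}. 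Consequently $\val\!\left(\val^{-1}(G)\right)=G\cap \mathrm{Im}(\val)=\Ker\!\left(G\to \Cl(L)/\Cl(L)^2\right)=W$, so $\val$ restricts to a short exact sequence
\[
0\longrightarrow \Ker(\val)\longrightarrow \val^{-1}(G)\xrightarrow{\ \val\ } W\longrightarrow 0 .
\]

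First I would dispatch the kernel. The first four elements $(1;1;-1)$, $(1;-1;1)$, $(-1;1;1)$, $(1;1;2)$ are exactly the $\mathbb F_2$-basis of $\Ker(\val)$ furnished by \cref{V01}, and they lie in $\val^{-1}(G)$ since $\1\in G$. Next I would check that the remaining three elements lift the basis of $W$ from \cref{W01}, which amounts to computing $\val$ of each using the splitting of $2$ and $p$ in $L^{(2)}=\mathbb Q(\sqrt{-p})$ and $L^{(3)}=\mathbb Q(\sqrt{-2p})$. Since $p\equiv 3\pmod 8$, the prime $2$ is inert in $L^{(2)}$ (whose discriminant is $-p\equiv 5\pmod 8$) while $p$ ramifies there, and both $2$ and $p$ ramify in $L^{(3)}$. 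A short computation then gives
\begin{align*}
\val(1;2;1) &= ((1);(2);(1))_2\times \1_p,\\
\val(p;T_2;1) &= \1_2\times ((p);(T_2);(1))_p,\\
\val(2;T_2;T_3) &= ((2);(1);(2,T_3))_2\times ((1);(T_2);(p,T_3))_p,
\end{align*}
which are precisely the three basis vectors of $W$ from \cref{W01}; in particular each of these three elements lies in $\val^{-1}(G)$.

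Finally I would assemble the basis. Because the images of $(1;2;1)$, $(p;T_2;1)$, $(2;T_2;T_3)$ under $\val$ form a basis of $W$, these three are linearly independent modulo $\Ker(\val)$; adjoining the four basis vectors of $\Ker(\val)$ from \cref{V01} yields seven linearly independent elements of $\val^{-1}(G)$. By the short exact sequence above, $\dim_{\mathbb F_2}\val^{-1}(G)=\dim_{\mathbb F_2}\Ker(\val)+\dim_{\mathbb F_2}W=4+3=7$, so the seven elements form a basis. The only genuinely computational step is the verification of the three valuations, where the care lies in pinning down the ramification of $2$ in $L^{(2)}$ and of $2,p$ in $L^{(3)}$ from the congruence $p\equiv 3\pmod 8$; once the image of $\val$ is identified with the class-group kernel, everything else is formal bookkeeping with \cref{V01,W01}.
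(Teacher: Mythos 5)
Your proposal is correct and follows essentially the same route as the paper: both establish the short exact sequence $0 \to \Ker(\val) \to \val^{-1}(G) \xrightarrow{\val} W \to 0$ (the surjectivity onto $W$ coming from the fact that a class trivial in $\Cl(L)/\Cl(L)^2$ is principal modulo squares of ideals) and then combine the bases of $\Ker(\val)$ and $W$ from \cref{V01,W01}. Your explicit verification that $\val$ sends $(1;2;1)$, $(p;T_2;1)$, $(2;T_2;T_3)$ to the three basis vectors of $W$ is exactly the bookkeeping the paper leaves implicit in its citation of \cref{W01}.
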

\begin{proof}
By the definition of $W$, 
we have $\val(\val^{-1}(G)) \subset W$.
Moreover, we have $W \subset \val(\val^{-1}(G))$:
Indeed, for every $[\mathfrak{a}] \in W$, there exists $\mathfrak{b} \in I(L)$ such that $[\mathfrak{a}\mathfrak{b}^2]=[(1)]$ in $\Cl(L)/\Cl(L)^2$.
Thus, there exists $a \in L^{\times}$ such that $[\mathfrak{a}]=[\mathfrak{a}\mathfrak{b}^2]=[(a)]$ in $I(L)/I(L)^2$.
Therefore, we obtain the following short exact sequence:
\[0 \to \Ker(\val) \to \val^{-1}(G) \overset{\val}{\to} W \to 0.\]
By \cref{V01,W01}, we obtain the assertion.
\end{proof}

\begin{lemma} \label{H+JS01}
Set elements of $L_2^{\times}/L_2^{\times 2} \times L_p^{\times}/L_p^{\times 2}$ as follows: 
\begin{align*}
d_1 &:= (2;-T_2;-T_3)_2 \times\1_p, \\
d_2 &:= (3;T_2;-3)_2 \times\1_p, \\
d_3 &:= (-1;-1-T_2;-1-T_3)_2 \times\1_p,   & \tilde{d_3}&:=(-3;5-T_2;5-T_3)_2 \times\1_p,\\
d_4 &:=  (-1;4+2T_2;1+2T_3)_2 \times\1_p, \\
d_5 &:= \1_2 \times(2;-T_2;-T_3)_p,\\
d_6 &:=\1_2 \times(p;T_2;2)_p,\\
h_1 &:= \res_S(1;1;-1)=(1;1;-1)_2 \times(1;1;-1)_p, \\
h_2 &:= \res_S(1;-1;1)=(1;-1;1)_2 \times(1;-1;1)_p, \\
h_3 &:= \res_S(-1;1;1)=(-1;1;1)_2 \times(-1;1;1)_p, \\
h_4 &:= \res_S(1;1;2)=(1;1;2)_2 \times(1;1;2)_p, \\
h_5 &:= \res_S(1;2;1)=(1;2;1)_2 \times(1;2;1)_p, \\
h_6 &:=\res_S(p;T_2;1)=(p;T_2;1)_2 \times(p;T_2;1)_p, \\
h_7 &:= \res_S(2;T_2;T_3)=(2;T_2;T_3)_2 \times(2;T_2;T_3)_p.
\end{align*}
\begin{enumerate}
\item We have
$d_1 d_5 h_1 h_2 h_7 =d_2 d_6 h_4 h_6 = \1_2 \times \1_p.$
\item  Suppose that $p \equiv 3 \pmod{32}$. 
Then, $d_1$, $d_2$, $d_3$, $d_4$, $h_1, \ldots, h_7$ form an $\mathbb{F}_2$-basis of $(\mathrm{Im}(\delta_{2}) \times\mathrm{Im}(\delta_{p}))+\res_S({\val}^{-1}(G))$.
\item Suppose that $p \equiv 19 \pmod{32}$. 
Then, $d_1$, $d_2$, $\tilde{d_3}$, $d_4$, $h_1, \ldots, h_7$ form an $\mathbb{F}_2$-basis of $(\mathrm{Im}(\delta_{2}) \times\mathrm{Im}(\delta_{p}))+\res_S({\val}^{-1}(G))$.
\end{enumerate}
\end{lemma}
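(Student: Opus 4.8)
The plan is to treat parts (2) and (3) in parallel, since they differ only by the substitution of $\tilde{d_3}$ for $d_3$, and to reduce the whole statement to part (1) plus one linear-independence check. First I would record that the target space $(\mathrm{Im}(\delta_{2}) \times \mathrm{Im}(\delta_{p})) + \res_S(\val^{-1}(G))$ is spanned by the thirteen elements $d_1, \ldots, d_6, h_1, \ldots, h_7$: indeed $d_1, d_2, d_3, d_4$ (resp.\ $\tilde{d_3}$) together with $d_5, d_6$ are the images on the two factors of the bases of $\mathrm{Im}(\delta_2)$ and $\mathrm{Im}(\delta_p)$ provided by \cref{J201,Jp01}, while $h_1, \ldots, h_7$ are $\res_S$ of the basis of $\val^{-1}(G)$ from \cref{H01}. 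Granting part (1), the two relations, rewritten as $d_5 = d_1 h_1 h_2 h_7$ and $d_6 = d_2 h_4 h_6$ (every class being $2$-torsion), express $d_5, d_6$ through the remaining eleven elements, so those eleven already span the sum. Hence the statement reduces to (i) verifying the two relations of part (1) and (ii) proving that $d_1, d_2, d_3, d_4, h_1, \ldots, h_7$ are linearly independent.

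For part (1) I would simply multiply out $d_1 d_5 h_1 h_2 h_7$ and $d_2 d_6 h_4 h_6$ coordinate by coordinate in $L_2^{\times}/L_2^{\times 2} \times L_p^{\times}/L_p^{\times 2}$. In the first product every coordinate collapses to a perfect square ($2\cdot 2 = 4$, $(-T_k)\cdot T_k = T_k^2$, and so on) on both the $2$- and the $p$-factor, so it is immediate. The second product is the delicate one: on the $p$-factor the coordinates again square off, but on the $2$-factor the first coordinate is $3p$ and the $L_2^{(3)}$-coordinate is $-6$. Here I would invoke $p \equiv 3 \pmod 8$ to get $3p \equiv 1 \pmod 8$, hence $3p \in \mathbb{Q}_2^{\times 2}$, and to write $-6 = T_3^2\cdot(3/p)$ with $T_3^2 = -2p$ and $3/p \in \mathbb{Q}_2^{\times 2}$, so that $-6$ is a square in $L_2^{(3)} = \mathbb{Q}_2(\sqrt{-2p})$. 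This is precisely where the congruence hypothesis enters part (1).

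For the independence in (ii) I would suppose a relation $\prod_{i=1}^{4} d_i^{a_i} \prod_{j=1}^{7} h_j^{b_j} = \1_2 \times \1_p$ and exploit the two projections. Projecting to the $p$-factor annihilates $d_1, \ldots, d_4$ and leaves a relation among the $p$-components $(h_j)_p$, while projecting to the $2$-factor leaves a relation among the $(d_i)_2$ (the independent basis of $\mathrm{Im}(\delta_2)$ from \cref{J201}) and the $(h_j)_2$. The hard part will be the $p$-projection: since $L_p^{\times}/L_p^{\times 2}$ has dimension $6 < 7$, the seven vectors $(h_j)_p$ cannot be independent, and in fact $(h_1)_p = (h_4)_p$, because $2$ and $-1$ represent the same nontrivial class in $(L_p^{(3)})^{\times}/(L_p^{(3)})^{\times 2}$ once $p \equiv 3 \pmod 8$ makes both nonsquares modulo $p$. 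Thus the $p$-projection alone cannot force the $b_j$ to vanish, and I expect to have to compute carefully the classes of the relevant units and uniformizers in the ramified quadratic extensions $L_p^{(2)} = \mathbb{Q}_p(\sqrt{-p})$, $L_p^{(3)} = \mathbb{Q}_p(\sqrt{-2p})$, and $L_2^{(3)}$, tracking which residue-field square conditions hold under the congruence, and then to feed the residual ambiguity from the $p$-side back into the $2$-side to eliminate it.

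Concretely, I would fix $\mathbb{F}_2$-bases of each local factor (for instance $\{-1,p\}$ for $\mathbb{Q}_p^{\times}/\mathbb{Q}_p^{\times 2}$ and $\{-1,T_k\}$ for the ramified extensions, using that $-1$ and $2$ are nonsquares modulo $p$), write all eleven vectors in these coordinates, and show the resulting matrix over $\mathbb{F}_2$ has rank $11$ by row reduction; the two cases $p \equiv 3$ and $p \equiv 19 \pmod{32}$ yield the same matrix except in the single block coming from $d_3$ versus $\tilde{d_3}$, so one reduction settles both after that substitution. As a consistency check, independence of the eleven gives $\dim\bigl((\mathrm{Im}(\delta_2)\times\mathrm{Im}(\delta_p)) + \res_S(\val^{-1}(G))\bigr) = 11$, whence the Selmer formula yields $\dim \Sel(\mathbb{Q},J) = 7 + 6 - 11 = 2$, in agreement with \cref{rank=0}; this numerical match is exactly what I would use to catch arithmetic slips in the local square computations.
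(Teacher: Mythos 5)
Your proposal is correct and follows essentially the same route as the paper: part (1) by direct computation of local square classes, then using those two relations to discard $d_5,d_6$ and reducing (2) and (3) to the linear independence of the remaining eleven elements, which the paper also establishes by $\mathbb{F}_2$-linear algebra on explicit square classes at $v=2$ and $v=p$ — via an ad hoc change of basis ($d_3'=d_3d_4$, $h_1'=h_1h_4$, $h_2'=h_2h_5$, $h_3'=h_3h_7$) and componentwise inspection rather than a full row reduction. In particular, your observation that $(h_1)_p=(h_4)_p$ (since $-2$ is a square mod $p$ for $p\equiv 3\pmod 8$) is exactly the collision the paper absorbs into $h_1'=h_1h_4=(1;1;-2)_2\times\1_p$, whose nontriviality is then detected on the $L_2^{(3)}$-component, just as you predict.
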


\begin{proof}
\begin{enumerate} \item We can check it by direct calculation.

\item By $(1)$, \cref{Jp01,J201,H01}, 
$(\mathrm{Im}(\delta_{2}) \times\mathrm{Im}(\delta_{p}))+\res_S({\val}^{-1}(G))$ is generated by $d_1, \ldots d_4, h_1,  \ldots , h_7$. 
Set
\begin{align*}
d_1&=(2;-T_2;-T_3)_2 \times\1_p, \\
d_2&=(3;T_2;-3)_2 \times\1_p, \\
d_3' &:= d_3d_4=(1;(-1-T_2)(4+2T_2);(-1-T_3)(1+2T_3))_2 \times\1_p, \\ 
d_4&= (-1;4+2T_2;1+2T_3)_2 \times\1_p, \\
h_1' &:= h_1h_4=(1;1;-2)_2 \times\1_p,\\
h_2' &:= h_2h_5 = (1;-2;1)_2 \times\1_p, \\
h_3' &:= h_3 h_7 =(-2;T_2;T_3)_2 \times(1;T_2;T_3)_p,\\
h_4&=\res_S(1;1;2)=(1;1;2)_2 \times(1;1;2)_p, \\
h_5&=\res_S(1;2;1)=(1;2;1)_2 \times(1;2;1)_p, \\
h_6&=\res_S(p;T_2;1)=(p;T_2;1)_2 \times(p;T_2;1)_p, \\
h_7&=\res_S(2;T_2;T_3)=(2;T_2;T_3)_2 \times(2;T_2;T_3)_p.
\end{align*}
It is sufficient to prove that the above eleven elements are linearly independent.
\begin{enumerate}
\item By taking the first components at $v=p$ into account, we see that there is no relation containing $h_6$ and $h_7$ . 
\item By taking the second components at $v=p$ into account, we see that there is no relation containing $h_3'$ and $h_5$. 
\item By taking the third components at $v=p$ into account, we see that there is no relation containing $h_4$.
\item By taking the first components at $v=2$ into account, we see that there is no relation containing $d_1$,  $d_2$ and $d_4$.
\item By taking the third components at $v=2$ into account, we see that there is no relation containing $d_3'$ and $h_1'$ because
\begin{itemize}
\item 
$(-1-T_3)(1+2T_3) \equiv 1+T_3 \pmod{T_3^2}$,
\item 
$-2 \equiv T_3^2(1+T_3^2) \pmod{T_3^6}$,
\item 
$-2(-1-T_3)(1+2T_3) \equiv T_3^2(1+T_3) \pmod{T_3^4}$.
\end{itemize}
\item Finally, by taking the second components at $v=2$ into account, we see that there is no relation containing $h_2'$.
\end{enumerate}

\item Respectively, $(2)$ holds.

By $(1)$, \cref{Jp01,J201,H01}, $(\mathrm{Im}(\delta_{2}) \times\mathrm{Im}(\delta_{p}))+\res_S({\val}^{-1}(G))$ is generated by $d_1, \ldots d_4, h_1,  \ldots , h_7$. 
Set
\begin{align*}
d_1&=(2;-T_2;-T_3)_2 \times\1_p, \\
d_2'&:= d_2d_3d_4=(1;T_2(5-T_2)(4+2T_2);-p(5-T_3)(1+2T_3))_2 \times\1_p, \\
\tilde{d_3} &= (-3;5-T_2;5-T_3)_2 \times\1_p, \\ 
d_4&= (-1;4+2T_2;1+2T_3)_2 \times\1_p, \\
h_1' &:= h_1h_4=(1;1;-2)_2 \times\1_p,\\
h_2' &:= h_2h_5 = (1;-2;1)_2 \times\1_p, \\
h_3' &:= h_3 h_7 =(-2;T_2;T_3)_2 \times(1;T_2;T_3)_p,\\
h_4&=\res_S(1;1;2)=(1;1;2)_2 \times(1;1;2)_p, \\
h_5&=\res_S(1;2;1)=(1;2;1)_2 \times(1;2;1)_p, \\
h_6&=\res_S(p;T_2;1)=(p;T_2;1)_2 \times(p;T_2;1)_p, \\
h_7&=\res_S(2;T_2;T_3)=(2;T_2;T_3)_2 \times(2;T_2;T_3)_p.
\end{align*}
It is sufficient to prove that the above eleven elements are linearly independent.

\begin{enumerate}
\item By taking the first components at $v=p$ into account, we see that there is no relation containing $h_6$ and $h_7$. 
\item By taking the second components at $v=p$ into account, we see that there is no relation containing $h_3'$ and $h_5$.
\item By taking the third components at $v=p$ into account, we see that there is no relation containing $h_4$.
\item By taking the first components at $v=2$ into account, we see that there is no relation containing $d_1$, $\tilde{d_3} $ and $d_4$.
\item By taking the third components at $v=2$ into account, we see that there is no relation containing $d_2'$ and $h_1'$ because
\begin{itemize}
\item $-p(5-T_3)(1+2T_3) \equiv 1+T_3 \pmod{T_3^2}$, 
\item 
$-2  \equiv T_3^2(1+T_3^2) \pmod{T_3^6}$, 
\item Since $2p(5-T_3)(1+2T_3) \equiv T_3^2(1+T_3) \pmod{T_3^4}$.
\end{itemize}
\item Finally, by taking the second components at $v=2$ into account, we see that there is no relation containing $h_2'$.
\end{enumerate}
\end{enumerate}
\end{proof}
Recall that 
\begin{align*}
\dim \Sel(\mathbb{Q},J^{(p; i, j)}) =& \dim {\val}^{-1}(G) + \dim(\mathrm{Im}(\delta_{2}) \times\mathrm{Im}(\delta_{p})) \\
&- \dim((\mathrm{Im}(\delta_{2}) \times\mathrm{Im}(\delta_{p}))+\res_S({\val}^{-1}(G)))
\end{align*}
and \[J(\mathbb{Q})/2J(\mathbb{Q}) \simeq \mathbb{F}_2^{\rank(J(\mathbb{Q}))} \oplus J(\mathbb{Q})[2].\]
Therefore, by \cref{2 torsion01,Jp01,J201,H01,H+JS01}, we obtain
\begin{align*}
\rank(J(\mathbb{Q}))  \leq \dim \Sel(\mathbb{Q},J)-\dim J(\mathbb{Q})[2]=7+6 -11-2=0. 
\end{align*}
This completes the proof of \cref{rank=0} $(1)$.

\subsection{Case $(i,j)=(1,1)$}

Suppose that $p \equiv 11 \pmod{16}$. Then, we have the following irreducible decompositions:
\[\begin{cases}
L_2=\mathbb{Q}_2[T_1]/(T_1) \times \mathbb{Q}_2[T_2]/(T_2^2+2p) \times \mathbb{Q}_2[T_3]/(T_3^2+4p), \\
L_p=\mathbb{Q}_p[T_1]/(T_1) \times \mathbb{Q}_p[T_2]/(T_2^2+2p) \times \mathbb{Q}_p[T_3]/(T_3^2+4p).
\end{cases}\] 
\begin{lemma} \label{2 torsion11}
The following two elements form an $\mathbb{F}_{2}$-basis of $J(\mathbb{Q})[2]$ and $J(\mathbb{Q}_{v})[2]$ for $v = 2, p, \infty$: 
\begin{align*} 
(0,0) &- \infty, & \sum_{\substack{P \in C(\ol{\mathbb{Q}}) \\ x_{P}^{2}+2p = 0}}P &- 2\infty.
\end{align*}
In particular, we have the following table.

\begin{table}[ht]
    \begin{tabular}{|l||l|l|} \hline
  $l$   & $\dim J(\mathbb{Q}_v)[2]$ & $\dim \delta_{v}(J(\mathbb{Q}_v))$ 
  \\ \hline \hline
     $2$   & $2$ & $4$
     \\ \hline
     $p$   & $2$  & $2$ 
   \\ \hline
      $\infty$   & $2$ & $0$ 
      \\ \hline  
          \end{tabular}
\end{table}
\end{lemma}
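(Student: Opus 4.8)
The plan is to follow the strategy of \cref{2 torsion01} essentially verbatim, since the case $(i,j)=(1,1)$ differs from $(i,j)=(0,1)$ only in the explicit quadratic factors of $f$. For the first statement I would invoke \cite[Lemma 5.2]{Stoll2014}, which describes $J(K)[2]$ for any field $K$ of characteristic $\neq 2$ in terms of the irreducible factorization of $f$ over $K$: writing $f=\prod_k f_k$ into monic irreducibles, the classes $\sum_{f_k(\alpha)=0}(\alpha,0)-(\deg f_k)\infty$ generate $J(K)[2]$ subject to the single relation that their sum equals the class of $\mathrm{div}(y)$, which is trivial. Hence $\dim_{\mathbb{F}_2}J(K)[2]$ equals the number of irreducible factors minus $1$, and the two divisors displayed in the statement---attached to the factors $x$ and $x^2+2p$---are two of the three generators; they are defined over $\mathbb{Q}$, hence lie in $J(\mathbb{Q}_v)[2]$ for every $v$.

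The key input is therefore the factorization type of $f(x)=x(x^2+2p)(x^2+4p)$ over each of $K=\mathbb{Q},\mathbb{Q}_2,\mathbb{Q}_p,\mathbb{R}$, and I claim that in every case it is (linear)$\times$(irreducible quadratic)$\times$(irreducible quadratic). This amounts to checking that neither $-2p$ nor $-4p$ (equivalently $-p$) is a square in $K$. Over $\mathbb{R}$ both are negative; over $\mathbb{Q}_p$ both have odd $p$-adic valuation; over $\mathbb{Q}_2$ the element $-2p$ has valuation $1$, while $-p$ is a $2$-adic unit with $-p\equiv 5\pmod 8$ (using $p\equiv 11\pmod{16}$, hence $p\equiv 3\pmod 8$). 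In each case the element is a non-square, so both quadratics stay irreducible over all four fields and the factorization type is constant. Since the three generators satisfy exactly one relation, any two of them are linearly independent, so the displayed pair is an $\mathbb{F}_2$-basis of $J(K)[2]$ for each such $K$, giving $\dim_{\mathbb{F}_2}J(\mathbb{Q}_v)[2]=2$ for $v=2,p,\infty$.

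For the second statement I would read the table off the local dimension formula (cf. \cite[p. 451, proof of Lemma 3]{FPS})
\[
\dim_{\mathbb{F}_2}\mathrm{Im}(\delta_v)=\dim_{\mathbb{F}_2}J(\mathbb{Q}_v)[2]+
\begin{cases}
0 & (v\neq 2,\infty),\\
2 & (v=2),\\
-2 & (v=\infty),
\end{cases}
\]
which together with $\dim_{\mathbb{F}_2}J(\mathbb{Q}_v)[2]=2$ yields $\dim\mathrm{Im}(\delta_2)=4$, $\dim\mathrm{Im}(\delta_p)=2$, and $\dim\mathrm{Im}(\delta_\infty)=0$, exactly as tabulated.

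The only genuinely arithmetic point---and the one place the congruence hypothesis enters---is the $2$-adic non-squareness of $-p$. As in the footnote to \cref{2 torsion01}, I expect that $p\equiv 3\pmod 8$ already suffices here, and that the sharper condition $p\equiv 11\pmod{16}$ is needed only later, for the explicit local computations at $v=2$ in the lemmas that follow. Everything else is formal once the factorization type is pinned down.
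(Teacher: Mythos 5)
Your proposal is correct and takes essentially the same approach as the paper: the paper likewise deduces the first statement from \cite[Lemma 5.2]{Stoll2014} together with the observation that neither $-p$ nor $-2p$ is a square in $\mathbb{Q}_{v}$ for $v = 2, p, \infty$, and then reads off the table from the dimension formula of \cite[p.~451, proof of Lemma 3]{FPS}. Your extra detail (the constant factorization type over all four fields, and the remark that only $p \equiv 3 \pmod{8}$ is really used here) just makes explicit what the paper's terse proof leaves implicit.
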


\begin{proof} 
The first statement follows from \cite[Lemma 5.2]{Stoll2014}.
Note that neither $-p$ nor $-2p$ is not square in $\mathbb{Q}_v$ for $v=2,p, \infty$. 
The second statement follows from the following formula (cf. \cite[p. 451, proof of Lemma 3]{FPS}).
\[\dim_{\mathbb{F}_2}  \delta_{v}(J(\mathbb{Q}_v)) =\dim_{\mathbb{F}_2} J({\mathbb{Q}_v})[2]
\begin{cases}
+0 & (v \neq 2, \infty), \\
+2 & (v=2),\\
-2 & (v=\infty).\\
\end{cases}\]
\end{proof}

\begin{lemma}
\label{Jp11} 
The following two elements of $L_p^{\times}/L_p^{\times 2}$ form an $\mathbb{F}_2$-basis of $\mathrm{Im}(\delta_{p})$:
\begin{align*}
(2;-T_2;-T_3),&& (2p;T_2;2). 
\end{align*}
 In particular, the following two elements of $I_p(L)/I_p(L)^2$ form an $\mathbb{F}_{2}$-basis of $G_p$: 
\begin{align*}
((1);(p,T_2); (T_3))_p, && ((p);(p,T_2);(1))_p.
\end{align*}
\end{lemma}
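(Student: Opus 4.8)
The plan is to follow the proof of \cref{Jp01} almost verbatim, replacing the factorization $x(x^2+p)(x^2+2p)$ by $x(x^2+2p)(x^2+4p)$. First I would apply \cref{image of 2-torsion} to the two generators of $J(\mathbb{Q})[2]$ supplied by \cref{2 torsion11}. For $(0,0)-\infty$ the relevant monic factor is $h(x)=x$, giving $\delta_p((0,0)-\infty)=-T+(T^2+2p)(T^2+4p)$; for $\sum_{x_P^2+2p=0}P-2\infty$ the factor is $h(x)=x^2+2p$, giving $\delta_p=(T^2+2p)-T(T^2+4p)$. Evaluating each expression in the three components $L_p^{(1)}\times L_p^{(2)}\times L_p^{(3)}$ and reducing modulo squares should produce exactly $(2;-T_2;-T_3)$ and $(2p;T_2;2)$.

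The step that needs genuine care --- and the only place the numerology of the exponents $(i,j)=(1,1)$ enters --- is the reduction modulo squares in the local factors for the second class. In $L_p^{(2)}=\mathbb{Q}_p[T_2]/(T_2^2+2p)$ one has $T_2^2=-2p$, so $-2p$ is a square there and the raw second component $-2pT_2$ collapses to $T_2$. In $L_p^{(3)}=\mathbb{Q}_p[T_3]/(T_3^2+4p)$ one has $T_3^2=-4p$, whence $-p=(T_3/2)^2$ is a square and the raw third component $-2p$ collapses to $2$. I would record these two identities explicitly, since they are what turn the literal outputs of \cref{image of 2-torsion} into the stated basis; everything else is the same bookkeeping as in \cref{Jp01}.

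With the two elements in hand, \cref{2 torsion11} gives $\dim_{\mathbb{F}_2}\mathrm{Im}(\delta_p)=2$, so it remains only to check that $(2;-T_2;-T_3)$ and $(2p;T_2;2)$ are $\mathbb{F}_2$-independent, i.e. that each is nontrivial and their product is nontrivial. Each is nontrivial already in its second component, because $T_2$ is a uniformizer of the ramified field $L_p^{(2)}$, so $v_{T_2}(\pm T_2)=1$ is odd. Their product is $(4p;2p;-2T_3)$, using $(-T_2)(T_2)=-T_2^2=2p$; this is nontrivial because its first component $4p$ has odd $p$-valuation in $L_p^{(1)}=\mathbb{Q}_p$. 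This settles the first assertion.

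For the \emph{in particular} clause I would apply $\val_p$ componentwise to the basis just obtained. The valuations are immediate: $v_p(2)=0$ and $v_p(2p)=1$ in $L_p^{(1)}$; $v(\pm T_2)=1$, giving the maximal ideal $(p,T_2)$ of $L_p^{(2)}$; and in $L_p^{(3)}$, $v(T_3)=1$ gives $(T_3)$ while $v(2)=0$ gives $(1)$. This yields $\val_p(2;-T_2;-T_3)=((1);(p,T_2);(T_3))_p$ and $\val_p(2p;T_2;2)=((p);(p,T_2);(1))_p$, which therefore generate $G_p$. They are independent in $I_p(L)/I_p(L)^2$ --- the first has nontrivial third component $(T_3)$ and the second has nontrivial first component $(p)$ --- so they form an $\mathbb{F}_2$-basis of $G_p$, as claimed. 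The main (and essentially only) obstacle is the square-class reduction of the second paragraph; the rest is identical to the $(0,1)$ case.
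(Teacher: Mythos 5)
Your proposal is correct and follows essentially the same route as the paper: apply \cref{image of 2-torsion} to the two generators from \cref{2 torsion11}, reduce modulo squares using $T_2^2=-2p$ and $T_3^2=-4p$, and conclude independence from the dimension count via odd valuations. In fact you spell out two points the paper leaves implicit (the square-class collapses $-2pT_2\equiv T_2$, $-2p\equiv 2$, and the explicit verification of the $G_p$ clause), so nothing is missing.
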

\begin{proof}
\cref{image of 2-torsion} implies that
\begin{align*}
&\delta_p ((0,0)-\infty)=-T+(T^2+2p)(T^2+4p)=(2;-T_2;-T_3), \\
&\delta_p \left(\sum_{\substack{P \in C(\ol{\mathbb{Q}_p}) \\ x_{P}^{2}+2p = 0}}P-2\infty \right)=(T^2+2p)-T(T^2+4p)=(2p;T_2;2).
\end{align*}
Hence, $(2;-T_2;-T_3), (2p;T_2;2) \in \mathrm{Im}(\delta_{p})$.

By \cref{2 torsion11}, it is sufficient to prove that the above two elements are linearly independent.
Since $v_{T_3}(-T_3)=1$ and $v_p(2p)=1$, $(2; -T_2; -T_3)$ and $(2p;T_2;2)$ are non-trivial in $L^{\times}/L^{\times 2}$.
Moreover, since $v_p(p)=1$, $(2;-T_2;-T_3) (2p;T_2;2)=(p;-1;-2T_3)$ is non-trivial in $L^{\times}/L^{\times 2}$.
Thus, the lemma holds.
\end{proof}

\begin{lemma} 
\label{J211} 
The following four elements of $L_2^{\times}/L_2^{\times 2}$ form an $\mathbb{F}_{2}$-basis of $\mathrm{Im}(\delta_{2})$: 
\begin{align*}
(2;-T_2;-T_3),&& (6;T_2;2),&& (3;3-T_2;3-T_3),&& (5;3;2-2T_3). 
\end{align*} 
In particular, the following two elements of $I_2(L)/I_2(L)^2$ form an $\mathbb{F}_{2}$-basis of $G_2$: 
\begin{align*}
((2);(2,T_2);(2))_2,&& ((1);(1);(2))_2. 
\end{align*}
\end{lemma}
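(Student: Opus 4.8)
The plan is to reproduce, in the single case $p\equiv 11\pmod{16}$, the argument that proves \cref{J201}. By \cref{2 torsion11} we already know $\dim_{\mathbb{F}_2}\mathrm{Im}(\delta_2)=4$, so it suffices to place the four displayed elements inside $\mathrm{Im}(\delta_2)$ and to check that they are $\mathbb{F}_2$-linearly independent; the asserted basis of $\mathrm{Im}(\delta_2)$ will then follow. Two of the four classes are forced by the $2$-torsion: applying \cref{image of 2-torsion} to the divisors $(0,0)-\infty$ and $\sum_{x_P^2+2p=0}P-2\infty$—the same classes computed over $L_p$ in \cref{Jp11}, now read off in $L_2$—yields $(2;-T_2;-T_3)$ and $(6;T_2;2)$. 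The only care needed is bookkeeping in the local factors: since $T_3^2=-4p$, the element $-p=(T_3/2)^2$ is already a square in $L_2^{(3)}=\mathbb{Q}_2(\sqrt{-p})$, so the third entry $-2p$ of the second class collapses to $2$, while $2p\equiv 6$ in $\mathbb{Q}_2^\times/\mathbb{Q}_2^{\times 2}$ because $p\equiv 3\pmod 8$.

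Next I would realise the two non-torsion classes by exhibiting points. For $(3;3-T_2;3-T_3)$ I would check that $f(3)$ is a square in $\mathbb{Z}_2$—this reduces to $f(3)\equiv 1\pmod 8$, which holds by $p\equiv 11\pmod{16}$ and Hensel's lemma—so that there is $Q\in C(\mathbb{Q}_2)$ with $x_Q=3$ and $\delta_2(Q-\infty)=(3;3-T_2;3-T_3)$. The fourth class $(5;3;2-2T_3)$ is the real work, and I expect it to be the main obstacle. Mirroring the $\tau=\sqrt2$ computation in \cref{J201}, I would search for $x_R$ in a ramified quadratic extension $K/\mathbb{Q}_2$ for which $f(x_R)\in K^{\times 2}$, certified by a truncated expansion followed by Hensel's lemma; the Galois-stable divisor $R+\ol R-2\infty$ then defines a class in $J(\mathbb{Q}_2)$, and I would arrange $x_R+\ol{x_R}$ and $x_R\ol{x_R}$ so that $\delta_2(R+\ol R-2\infty)=(5;3;2-2T_3)$.

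With the four elements in hand, linear independence is a finite check carried out entry by entry. Reducing the first entries in $\mathbb{Q}_2^\times/\mathbb{Q}_2^{\times 2}$ leaves exactly one candidate relation, namely that the product of the first three classes is trivial (their first entries $2,6,3$ already multiply to a square, while the entry $5$ of the fourth class is independent); I would eliminate this relation by comparing the entries in the ramified factor $L_2^{(2)}=\mathbb{Q}_2(\sqrt{-2p})$ modulo $T_2^2$, exactly in the style of the $\pmod{T_3^2}$ computations that close \cref{J201}. This step should be short once the fourth class is available.

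Finally, for the statement about $G_2$ I would apply $\val_2$ to the four generators and read off the ideals from norms and traces. Here $T_2$ is a uniformizer of the ramified factor $L_2^{(2)}$, whereas $L_2^{(3)}$ is unramified so that $2$ generates its prime. A direct computation gives that the two torsion classes both map to $((2);(2,T_2);(2))_2$, that the class with $x_Q=3$ maps to $\1_2$ because all three local norms are $2$-adic units, and that the fourth class maps to $((1);(1);(2))_2$. Hence $G_2$ is spanned by $((2);(2,T_2);(2))_2$ and $((1);(1);(2))_2$, as claimed. The main obstacle throughout is the construction of the quadratic point underlying the fourth class and the certification that $f$ is a local square there; everything else is either a direct application of \cref{image of 2-torsion} or a mechanical valuation computation.
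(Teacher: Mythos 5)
Your overall strategy coincides with the paper's: reduce via \cref{2 torsion11} to exhibiting the four classes inside $\mathrm{Im}(\delta_{2})$ and checking their independence; obtain the first two classes from the $2$-torsion divisors via \cref{image of 2-torsion} (your bookkeeping that $-p=(T_3/2)^2$ is a square in $L_2^{(3)}$, so the last entry $-2p$ collapses to $2$, and that $2p\equiv 6$ in $\mathbb{Q}_2^{\times}/\mathbb{Q}_2^{\times 2}$, is exactly right); obtain the third class from a rational point with $x_Q=3$ using $f(3)\equiv 1\pmod 8$; rule out the unique candidate relation by a computation modulo $T_2^2$ in the ramified factor; and compute $\val_2$ of the generators to get the statement about $G_2$. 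All of these steps are correct and match the paper.

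However, there is a genuine gap at the step you yourself call ``the real work'': you never actually produce the quadratic point whose class is $(5;3;2-2T_3)$ — you only announce that you \emph{would search} for $x_R$ in a ramified quadratic extension and \emph{would arrange} its trace and norm suitably. Membership of $(5;3;2-2T_3)$ in $\mathrm{Im}(\delta_{2})$ is precisely what must be proved, and nothing in your argument establishes it. Note moreover that your final claim about $G_2$ collapses without this class: the three classes you do construct have valuations $((2);(2,T_2);(2))_2$, $((2);(2,T_2);(2))_2$ and $\1_2$, so they only span the one-dimensional subgroup generated by $((2);(2,T_2);(2))_2$, and the generator $((1);(1);(2))_2$ of $G_2$ comes exclusively from the missing fourth class. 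The paper fills this gap concretely: working in $K=\mathbb{Q}_2[\tau]/(\tau^2+2\tau+2)$, where $2=\tau^2+\tau^3$ and $p\equiv 1+\tau^2+\tau^3+\tau^6\pmod{\tau^7}$, it takes $x_R=\tau^2(1+\tau^2)$ and verifies $f(\tau^2(1+\tau^2))\equiv\tau^{10}(1+\tau)^2(1+\tau^2)^2\pmod{\tau^{15}}$, so that $f(x_R)\in K^{\times 2}$ by Hensel's lemma; then $x_R+x_{\ol{R}}=-8$, $x_Rx_{\ol{R}}=20$, the divisor $R+\ol{R}-2\infty$ is Galois-stable and hence defines a point of $J(\mathbb{Q}_2)$, and $\delta_2(R+\ol{R}-2\infty)=T^2+8T+20=(5;3;2-2T_3)$ in $L_2^{\times}/L_2^{\times 2}$. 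Until you supply such explicit data (or some other argument for this membership), the lemma is not proved.
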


\begin{proof}
First, we show that the four elements in the statement actually lie in $\mathrm{Im}(\delta_{2})$.
Indeed, \cref{image of 2-torsion} implies that
\begin{align*}
&\delta_2 ((0,0)-\infty)=-T+(T^2+2p)(T^2+4p)=(2;-T_2;-T_3),  \\
&\delta_2 \left(\sum_{\substack{P \in C(\ol{\mathbb{Q}}) \\ x_{P}^{2}+2p = 0}}P-2\infty \right)=(T^2+2p)-T(T^2+4p)=(6;T_2;2). 
\end{align*}
Hence, $(2;-T_2;-T_3), (6;T_2;2) \in \mathrm{Im}(\delta_{2})$. Moreover, since $f(3) \equiv 1 \pmod{8}$, there exists $Q \in C(\mathbb{Q}_2)$ such that $x_Q=3$, and
\[(3;3-T_2;3-T_3)=\delta_2(Q- \infty)  \in \mathrm{Im}(\delta_{2}).\]
Finally, since $2=\tau^2+\tau^3$ in $\mathbb{Q}_2[\tau]/(\tau^2+2 \tau+2)$ and $p \equiv 1+\tau^2+\tau^3 + \tau^6 \pmod{{\tau}^7}$, 
we can check that 
$f(\tau^2(1+\tau^2)) \equiv \tau^{10}(1+\tau)^2(1+\tau^2)^2 \pmod{\tau^{15}}$, hence 
$f(\tau^2(1+\tau^2))$ is square in $\mathbb{Q}_2[\tau]/(\tau^2+2 \tau+2)$ by using Hensel's lemma.  
Therefore, we see that there exist $R$, $\ol{R}$ such that $x_R+x_{\ol{R}}=-8$, $x_Rx_{\ol{R}}=20$, 
and $R+\ol{R}-2\infty$ defines an element in  $J(\mathbb{Q}_2)$.
Thus,
\[ (5;3;2-2T_3)=\delta_2(R+\ol{R}-2\infty) \in \mathrm{Im}(\delta_{2}).\]

By \cref{2 torsion11}, it is sufficient to prove that the four elements in the statement are linearly independent.
By taking their first components into account, the four elements in the statement are non-trivial in $\mathrm{Im}(\delta_{2})$,
and the only possible relation is
\begin{align*}
(2;-T_2;-T_3)  (6;T_2;2)  (3;3-T_2;3-T_3)=\1.  
\end{align*}
However, this is impossible because
\[T_2-3 \equiv 1+T_2 \not\equiv 1 \pmod{T_2^2}.\] 
Therefore, we obtain the assertion. 

By the first statement, to prove the second, it is sufficient to show that
\[\begin{cases}
\val_2(2;-T_2;-T_3)=((2);(2,T_2);(2))_2,\\
\val_2(6;T_2;2)=((2);(2,T_2);(2))_2,\\ 
\val_2(3;3-T_2;3-T_3)=\1_2,\\
\val_2(5;3;2-2T_3)=((1);(1);(2))_2. 
\end{cases}\]
First, the first components are obvious. 
Since $T_2$ is a uniformizer in $L^{(2)}_2$, we obtain the second components. 
Since $2$ is a uniformizer in $L^{(3)}_2$ we obtain the third components.
\end{proof}

\begin{corollary} \label{V11}
The following four elements of $L^{\times}/ L^{\times 2}$ form an $\mathbb{F}_{2}$-basis of $\Ker(\val)$: 
 \begin{align*}
 (1;1;-1),&& (1;-1;1),&& (-1;1;1),&& (1;2;1).
 \end{align*}
\end{corollary}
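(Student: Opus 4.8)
The plan is to follow the proof of \cref{V01} essentially verbatim, since the arithmetic setup is parallel. I would apply the snake lemma to the same commutative diagram relating $\mathcal{O}_L^{\times}$, $L^{\times}$, $I(L)$ and $\Cl(L)$, obtaining the short exact sequence
\[1 \to \mathcal{O}_L^{\times}/\mathcal{O}_L^{\times 2} \to \Ker(\val) \to \Cl(L)[2] \to 0,\]
and then exhibit the four stated elements as a basis compatible with this filtration: the three unit classes span the left-hand term and the fourth element maps to a generator of $\Cl(L)[2]$. The one structural change from the case $(i,j)=(0,1)$ is that the two quadratic factors have swapped roles. Here $L^{(2)} \simeq \mathbb{Q}(\sqrt{-2p})$ and $L^{(3)} \simeq \mathbb{Q}(\sqrt{-p})$, so the nontrivial $2$-torsion of the class group now sits in $L^{(2)}$ rather than $L^{(3)}$; this is precisely why the fourth basis element is $(1;2;1)$ instead of $(1;1;2)$.

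First I would dispose of the unit part. Since $L^{(1)} \simeq \mathbb{Q}$ and both $L^{(2)}$, $L^{(3)}$ are imaginary quadratic (neither equal to $\mathbb{Q}(\sqrt{-1})$ nor $\mathbb{Q}(\sqrt{-3})$, as $2p$ and $p$ are never squares and $p \neq 3$), the unit group is $\mathcal{O}_L^{\times} = \{\pm 1\}^3$. Hence $(1;1;-1)$, $(1;-1;1)$, $(-1;1;1)$ form an $\mathbb{F}_2$-basis of $\mathcal{O}_L^{\times}/\mathcal{O}_L^{\times 2}$, exactly as in \cref{V01}.

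Next I would compute $\Cl(L)[2]$ factorwise. The factor $\Cl(L^{(1)})[2]$ vanishes because $\mathbb{Q}$ has trivial class group. For $L^{(3)} \simeq \mathbb{Q}(\sqrt{-p})$ the discriminant is $-p \equiv 1 \pmod 4$ with a single odd prime divisor, so \cref{Genus theory}$(2)$ gives $\#\Cl(L^{(3)})[2] = 2^{1-1} = 1$. For $L^{(2)} \simeq \mathbb{Q}(\sqrt{-2p})$ the discriminant is $-8p \equiv 0 \pmod 4$, again with a single odd prime divisor, so $\#\Cl(L^{(2)})[2] = 2^{1} = 2$. Crucially, $L^{(2)}$ is literally the field treated in \cref{2Cl}, and its hypothesis $p \equiv 3 \pmod 8$ still holds because $p \equiv 11 \pmod{16}$; so the same genus-theoretic argument (no primitive $\mathbb{Z}_2$-solution of $2x^2 - 8py^2 = z^2$) shows that $\Cl(L^{(2)})[2]$ is generated by the class of $(2, T_2)$, whose square is the principal ideal $(2)$.

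Finally I would check that $(1;2;1)$ completes the basis. It lies in $\Ker(\val)$ because $(2) = (2, T_2)^2$ in $L^{(2)}$, so $\val(1;2;1)$ is a square ideal; and by tracing the connecting map of the snake lemma, its image in $\Cl(L)[2]$ is exactly the class of $(2, T_2)$, which is nontrivial by the previous paragraph. Together with the three unit classes this gives four independent elements, matching $\dim_{\mathbb{F}_2} \Ker(\val) = 3 + 1 = 4$. I do not anticipate a genuine obstacle: the entire argument mirrors \cref{V01}, and the only points demanding care are bookkeeping the swap of $L^{(2)}$ and $L^{(3)}$ and confirming that the relevant generator is $(2, T_2)$ (with the $2$ placed in the second slot), rather than reproving the unit and principal-ideal-domain facts.
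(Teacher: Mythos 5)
Your proposal is correct and takes essentially the same route as the paper's proof: the identical snake-lemma exact sequence $1 \to \mathcal{O}_L^{\times}/\mathcal{O}_L^{\times 2} \to \Ker(\val) \to \Cl(L)[2] \to 0$, the same unit basis $(1;1;-1)$, $(1;-1;1)$, $(-1;1;1)$, and the same factorwise class-group computation in which \cref{Genus theory} $(2)$ kills $\Cl(L^{(1)})[2]$ and $\Cl(L^{(3)})[2]$ while \cref{2Cl} (applicable since $p \equiv 11 \pmod{16}$ implies $p \equiv 3 \pmod{8}$) shows $\Cl(L^{(2)})[2]$ is generated by the class of $(2,T_2)$, so that $(1;2;1)$ has nontrivial image in $\Cl(L)[2]$. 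Your additional remarks, namely the explicit discriminant checks and the observation that $L^{(2)}$ and $L^{(3)}$ have swapped roles relative to \cref{V01}, are just more explicit bookkeeping of what the paper's proof does implicitly.
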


\begin{proof}
By the exactly same manner as in \cref{V01}, we obtain a short exact sequence 
\[1 \to \mathcal{O}_L^{\times}/\mathcal{O}_L^{\times 2} \to \Ker(\val) \to \Cl(L)[2] \to 0.\] 
First, $(1;1;-1)$, $(1;-1;1)$ and $(-1;1;1)$ form an $\mathbb{F}_2$-basis of $\mathcal{O}_L^{\times}/\mathcal{O}_L^{\times 2}$.
Since the ring of integers of $L^{(1)} \simeq \mathbb{Q}$ is a PID, we have $\Cl(L^{(1)})[2]=0$.
\cref{2Cl} implies that $\Cl(L^{(2)})[2]$ is generated by the class of $(2,T_2)$. 
By \cref{Genus theory} $(2)$, we have $\Cl(L^{(3)})[2]=0$. 
Therefore, by a diagram chasing, we can check that the image of $(1;2;1)$ in $\Cl(L)[2] $ is non-trivial.
This completes the proof.
\end{proof}

\begin{lemma} \label{W11}
The following three elements of $I(L)/I(L)^2$ form an $\mathbb{F}_{2}$-basis of $W$: 
\begin{align*} ((1);(1);(2))_2 \times \1_p, &&
 ((2);(2,T_2);(2))_2 \times ((1);(p,T_2);(T_3))_p, &&
\1_2 \times ((p);(1);(T_3))_p.
 \end{align*}
\end{lemma}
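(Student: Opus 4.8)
The plan is to follow the proof of \cref{W01} almost verbatim, the only structural change being that the class group contribution now comes from the second factor $L^{(2)} = \mathbb{Q}(\sqrt{-2p})$ rather than from $L^{(3)}$. First I would record, from \cref{Jp11,J211}, the $\mathbb{F}_2$-basis
\begin{align*}
((2);(2,T_2);(2))_2 \times \1_p, && ((1);(1);(2))_2 \times \1_p, \\
\1_2 \times ((1);(p,T_2);(T_3))_p, && \1_2 \times ((p);(p,T_2);(1))_p
\end{align*}
of $G = G_2 \times G_p$, so that $\dim_{\mathbb{F}_2} G = 4$. Each of the three elements in the statement is a product of these four generators: the first is the second generator, the second is the product of the first and third, and the third is the product of the last two (using $(p,T_2)^2 \equiv (1)$ modulo squares). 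In particular all three lie in $G$.

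Next I would check they lie in $W = \Ker(G \to \Cl(L)/\Cl(L)^2)$, i.e.\ that each has trivial image in $\Cl(L)/\Cl(L)^2 = \prod_{i=1}^3 \Cl(L^{(i)})/\Cl(L^{(i)})^2$. The $L^{(1)}$-components are harmless since $\Cl(L^{(1)}) = \Cl(\mathbb{Q}) = 0$, and the $L^{(3)}$-components are harmless since $\Cl(L^{(3)})[2] = 0$ by \cref{Genus theory} $(2)$ (as already observed in the proof of \cref{V11}), whence $\Cl(L^{(3)})/\Cl(L^{(3)})^2 = 0$. The only substantive verification is the $L^{(2)}$-component of the second element, namely the class of $(2,T_2)(p,T_2)$: this vanishes because $(2,T_2)$ and $(p,T_2)$ are the ramified primes above $2$ and $p$, so that $(2,T_2)(p,T_2) = (T_2)$ is principal (comparing norms, $N(T_2) = 2p = N((2,T_2))N((p,T_2))$ and $T_2^2 = -2p$).

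Then I would read off linear independence from the basis above: in the indicated ordered basis the three elements have coordinate vectors $(0,1,0,0)$, $(1,0,1,0)$, $(0,0,1,1)$, which are linearly independent over $\mathbb{F}_2$; hence $\dim_{\mathbb{F}_2} W \ge 3$. Finally, to get the reverse inequality I would exhibit a generator of $G$ lying outside $W$, namely $((2);(2,T_2);(2))_2 \times \1_p$: its $L^{(2)}$-component is $(2,T_2)$, whose class is nontrivial in $\Cl(L^{(2)})/\Cl(L^{(2)})^2$ by \cref{2Cl} applied to $L^{(2)} = \mathbb{Q}(\sqrt{-2p})$ (valid since $p \equiv 11 \pmod{16}$ forces $p \equiv 3 \pmod 8$). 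Thus $W \neq G$, so $\dim_{\mathbb{F}_2} W \le 3$, and the three displayed elements form an $\mathbb{F}_2$-basis of $W$.

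The only step demanding genuine care is the ideal-theoretic bookkeeping in $L^{(2)} = \mathbb{Q}(\sqrt{-2p})$: recognizing $(2,T_2)(p,T_2)$ as the principal ideal $(T_2)$ on the one hand, and invoking \cref{2Cl} to see that $(2,T_2)$ by itself is nontrivial modulo squares on the other. Both facts are the exact analogues, with the roles of $L^{(2)}$ and $L^{(3)}$ interchanged, of the computations already carried out in \cref{2Cl,W01}, so no new input beyond Gauss genus theory is required.
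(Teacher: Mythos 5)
Your proof is correct and takes essentially the same approach as the paper's: the same four-element basis of $G$ from \cref{Jp11,J211}, the same membership check via triviality of the images in the class group (including the key computation $(2,T_2)(p,T_2)=(T_2)$ in $L^{(2)}=\mathbb{Q}(\sqrt{-2p})$), and the same appeal to \cref{2Cl} (transposed to $L^{(2)}$, legitimate since $p\equiv 11 \pmod{16}$ implies $p \equiv 3 \pmod 8$) to show $((2);(2,T_2);(2))_2 \times \1_p \notin W$, hence $\dim_{\mathbb{F}_2} W = 3$. The only cosmetic difference is that you verify triviality componentwise in $\Cl(L)/\Cl(L)^2$, using $\Cl(L^{(3)})[2]=0$, rather than exhibiting principality in $\Cl(L)$ itself, which is exactly what membership in $W$ requires.
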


\begin{proof}
\cref{Jp11,J211}, the following four elements of $I(L)/I(L)^2$ form an $\mathbb{F}_2$-basis of $G$.
\begin{align*}
 ((2);(2,T_2);(2))_2 \times \1_p,&& ((1);(1);(2))_2 \times \1_p, && \\
 \1_2 \times ((1);(p,T_2); (T_3))_p,&& \1_2 \times ((p);(p,T_2);(1))_p.
 \end{align*}
Since the images of 
\begin{align*}
((1);(1);(2))_2 \times \1_p,&&  ((2);(2,T_2);(2))_2 \times ((1);(p,T_2);(T_3))_p,&& \1_2 \times ((p);(1);(T_3))_p. 
\end{align*}
in $\Cl(L)$ are trivial, the three elements in the statement lie in $W$.
Moreover, we see that they are linearly independent.
Finally, by \cref{2Cl}, we have $((2);(2,T_2);(2))_2 \times \1_p \not\in W$.
Therefore, we obtain the assertion.
\end{proof}

By the exactly same manner as in \cref{H01}, the following lemma follows from \cref{V11,W11}. 
\begin{lemma} \label{H11}
The following seven elements of $L^{\times}/L^{\times 2}$ form an $\mathbb{F}_{2}$-basis of ${\val}^{-1}(G)$: 
\begin{align*}
(1;1;-1),&& (1;-1;1),&& (-1;1;1), && (1;2;1),&& (1;1;2),&& (2;T_2;T_3),&& (p;1;T_3).
\end{align*}
\end{lemma}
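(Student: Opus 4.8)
The plan is to reproduce verbatim the structure of the proof of \cref{H01}, since none of the abstract machinery depends on the particular case $(i,j)$. The snake-lemma diagram set up in the proof of \cref{V01} is again available, and it yields the short exact sequence
\[0 \to \Ker(\val) \to \val^{-1}(G) \overset{\val}{\to} W \to 0,\]
where the surjectivity onto $W$ is exactly the argument of \cref{H01}: given $[\mathfrak{a}] \in W = \Ker(G \to \Cl(L)/\Cl(L)^2)$, a suitable square multiple $\mathfrak{a}\mathfrak{b}^2$ is principal, say $=(a)$, so that $[\mathfrak{a}] = \val(a)$ lies in $G$ and thus $a \in \val^{-1}(G)$ maps onto $[\mathfrak{a}]$. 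Granting this sequence, it suffices to exhibit a basis of $\Ker(\val)$ together with elements of $L^{\times}/L^{\times 2}$ mapping onto a basis of $W$; their union is then a basis of $\val^{-1}(G)$ by the additivity of dimensions in the exact sequence.

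First I would take the four elements $(1;1;-1)$, $(1;-1;1)$, $(-1;1;1)$, $(1;2;1)$ straight from \cref{V11}; these span $\Ker(\val)$ and account for the first four of the seven listed generators. It then remains to produce preimages of the basis of $W$ recorded in \cref{W11}, namely $((1);(1);(2))_2 \times \1_p$, $((2);(2,T_2);(2))_2 \times ((1);(p,T_2);(T_3))_p$, and $\1_2 \times ((p);(1);(T_3))_p$. My candidates are precisely the three remaining generators $(1;1;2)$, $(2;T_2;T_3)$, and $(p;1;T_3)$.

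The main computational step is to evaluate $\val$ on these three elements, reusing the local data already assembled in \cref{Jp11,J211}: at $v=2$ one has $\val_2(2)=(2)$ in $L^{(3)}_2$ (where $2$ is a uniformizer) and $\val_2(T_2)=(2,T_2)$ in $L^{(2)}_2$, while at $v=p$ both $T_2$ and $T_3$ are uniformizers with $\val_p(T_2)=(p,T_2)$ and $\val_p(T_3)=(T_3)$. Feeding these in gives $\val(1;1;2)=((1);(1);(2))_2 \times \1_p$, $\val(2;T_2;T_3)=((2);(2,T_2);(2))_2 \times ((1);(p,T_2);(T_3))_p$, and $\val(p;1;T_3)=((1);(1);(2))_2 \times ((p);(1);(T_3))_p$. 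The first two images are exactly the first two basis vectors of $W$, and the third is the product of the first and third; hence the three images again span $W$ and are $\mathbb{F}_2$-linearly independent, and all three elements lie in $\val^{-1}(G)$ because their images lie in $W \subseteq G$.

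The hard part, as in \cref{H01}, is not the exact sequence but keeping the ramification of $2$ and $p$ in $L^{(2)}$ and $L^{(3)}$ straight, so that the $\val$-images land on a genuine change of basis of the $W$-basis of \cref{W11} rather than on a proper subspace. Once that bookkeeping is confirmed, linear independence of the seven elements is automatic: projecting a hypothetical relation through $\val$ kills the coefficients of $(1;1;2)$, $(2;T_2;T_3)$, $(p;1;T_3)$ by independence of their images in $W$, and the residual relation among $(1;1;-1)$, $(1;-1;1)$, $(-1;1;1)$, $(1;2;1)$ vanishes by \cref{V11}. This identifies the stated seven elements as an $\mathbb{F}_2$-basis of $\val^{-1}(G)$.
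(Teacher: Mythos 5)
Your proposal is correct and follows essentially the same route as the paper, which proves \cref{H11} by invoking the short exact sequence $0 \to \Ker(\val) \to \val^{-1}(G) \overset{\val}{\to} W \to 0$ from the proof of \cref{H01} and combining the $\Ker(\val)$-basis of \cref{V11} with lifts of the $W$-basis of \cref{W11}. Your explicit valuation computations (in particular, noting that $\val(p;1;T_3)$ hits the product of the first and third basis vectors of \cref{W11}, so the three images form a change of basis of $W$ rather than the listed basis itself) correctly fill in the details that the paper compresses into ``by the exactly same manner as in \cref{H01}.''
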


\begin{lemma} \label{H+JS11}
Set elements of $L_2^{\times}/L_2^{\times 2} \times L_p^{\times}/L_p^{\times 2}$ as follows: 
\begin{align*}
d_1 &:= (2;-T_2;-T_3)_2 \times\1_p, \\
d_2 &:= (6;T_2;2)_2 \times\1_p, \\
d_3 &:= (3;3-T_2;3-T_3)_2 \times\1_p, \\
d_4 &:= (5;3; 2-2T_3)_2 \times\1_p, \\
d_5 &:= \1_2 \times(2;-T_2;-T_3)_p,\\
d_6 &:=\1_2 \times(2p;T_2;2)_p,\\
h_1 &:= \res_S(1;1;-1)=(1;1;-1)_2 \times(1;1;-1)_p, \\
h_2 &:= \res_S(1;-1;1)=(1;-1;1)_2 \times(1;-1;1)_p, \\
h_3 &:= \res_S(-1;1;1)=(-1;1;1)_2 \times(-1;1;1)_p, \\
h_4 &:= \res_S(1;2;1)=(1;2;1)_2 \times(1;2;1)_p, \\
h_5 &:= \res_S(1;1;2)=(1;1;2)_2 \times(1;1;2)_p, \\
h_6 &:= \res_S(2;T_2;T_3)=(2;T_2;T_3)_2 \times(2;T_2;T_3)_p, \\
h_7 &:= \res_S(p;1;T_3)=(3;1;T_3)_2 \times(p;1;T_3)_p.
\end{align*}
\begin{enumerate}
\item We have $d_1d_5h_1h_2h_6=d_2d_6h_7 = \1_2 \times \1_p$.
\item $d_1, \ldots, d_4$, $h_1, \ldots, h_7$ form an $\mathbb{F}_2$-basis of $(\mathrm{Im}(\delta_{2}) \times\mathrm{Im}(\delta_{p}))+\res_S({\val}^{-1}(G))$.
\end{enumerate}
\end{lemma}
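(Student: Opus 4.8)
The plan is to transcribe the argument of \cref{H+JS01} almost line for line, the only genuine difference being that the two quadratic components have swapped their ramification behaviour: here $L_2^{(2)} \simeq \mathbb{Q}_2(\sqrt{-2p})$ is ramified (with uniformizer $T_2$) while $L_2^{(3)} \simeq \mathbb{Q}_2(\sqrt{-p})$ is unramified (with uniformizer $2$), whereas in the case $(0,1)$ it was the other way around. First I would settle part $(1)$ by a direct componentwise computation in $L_2^{\times}/L_2^{\times 2} \times L_p^{\times}/L_p^{\times 2}$: the point is simply that in each slot the factors collect into a manifest square, the relevant simplifications being $T_2^2 = -2p$ and $T_3^2 = -4p$ (so that $-p = (T_3/2)^2$ is already a square in $L^{(3)}$), together with the congruence $p \equiv 3 \pmod{8}$. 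Granting the two identities of part $(1)$, I can solve them for $d_5$ and $d_6$, so that by \cref{Jp11,J211,H11} the group $(\mathrm{Im}(\delta_2) \times \mathrm{Im}(\delta_p)) + \res_S(\val^{-1}(G))$ is already generated by the eleven classes $d_1, \dots, d_4, h_1, \dots, h_7$. It therefore suffices to prove that these eleven classes are linearly independent, which will yield the dimension $11$.

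For the independence I would imitate the elimination carried out in \cref{H+JS01}$(2)$. First I pass to a triangularizing set of generators, replacing a handful of the $d_i$ and $h_i$ by judicious products so that each new generator is singled out by one coordinate. Since $d_1, \dots, d_4$ are trivial at $v = p$, every coordinate at $v = p$ sees only $h_1, \dots, h_7$; reading off the first, second and third coordinates there (in $\mathbb{Q}_p^{\times}/\mathbb{Q}_p^{\times 2}$ and in the two quadratic components of $L_p$) successively forbids any relation from involving $h_6$, $h_7$ and several further $h_i$. The one remaining potential relation is then supported entirely at $v = 2$, where the first, third and second coordinates in turn separate $d_1, \dots, d_4$ and the surviving $h_i$.

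The hard part will be the non-square detections at $v = 2$ in the two quadratic components. In the ramified field $L_2^{(2)} = \mathbb{Q}_2(\sqrt{-2p})$ I would certify non-squareness by expanding the relevant products modulo a small power of the uniformizer $T_2$ and exhibiting a surviving odd-order term, exactly as in the computation $T_2 - 3 \equiv 1 + T_2 \pmod{T_2^2}$ inside \cref{J211}. In the unramified field $L_2^{(3)} = \mathbb{Q}_2(\sqrt{-p})$, where $2$ is the uniformizer and $T_3 = 2\sqrt{-p}$, I must instead argue at the level of unit square classes, using that $-p$ is a square while $2$ is not; the products entering a would-be relation must then be reduced modulo $\mathcal{O}_{L_2^{(3)}}^{\times 2}$ with some care. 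Keeping these two local square-class bookkeepings straight—and checking that the triangularizing substitutions really do isolate one generator per coordinate—is the only delicate point; the rest is identical to the case $(0,1)$.
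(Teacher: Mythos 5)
Your plan is, in outline, exactly the paper's proof: part $(1)$ by componentwise computation, and part $(2)$ by solving the relations of $(1)$ for $d_5,d_6$ to reduce to the eleven classes, then a coordinate-by-coordinate elimination (first the three coordinates at $v=p$, then those at $v=2$) after a triangularizing change of generators; the paper's delicate checks are precisely the two you anticipate, namely expansions such as $-3+T_2\equiv 1+T_2\pmod{T_2^2}$ and $-2(-3+T_2)\equiv T_2^2(1+T_2)\pmod{T_2^4}$ in the ramified component $L_2^{(2)}\simeq\mathbb{Q}_2(\sqrt{-6})$, and an odd-valuation (hence non-square) argument for $-2$ in the unramified component $L_2^{(3)}\simeq\mathbb{Q}_2(\sqrt{-p})$.

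There is, however, one step of your proposal that would fail if executed: the second identity of part $(1)$ is false as stated, so no direct computation can make its factors ``collect into a manifest square.'' Indeed, $d_2d_6=\res_S\bigl((T^2+2p)-T(T^2+4p)\bigr)=\res_S(2p;T_2;-2p)$, while $h_7=\res_S(p;1;T_3)$; the first component of $d_2d_6h_7$ is therefore $18\equiv 2$ at $v=2$ and $2p^2\equiv 2$ at $v=p$, and $2$ is a non-square both in $\mathbb{Q}_2$ and in $\mathbb{Q}_p$ (since $p\equiv 3\pmod 8$), so $d_2d_6h_7\neq\1_2\times\1_p$; its second and third components, $T_2$ and $2T_3$, are non-squares as well. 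This is a typo in the paper (whose own proof of $(1)$ also just says ``direct calculation''): since $T_3^2$ and $-p=(T_3/2)^2$ are squares in $L^{(3)}$, one has $(2p;T_2;-2p)\equiv(1;1;2)(2;T_2;T_3)(p;1;T_3)$ in $L^{\times}/L^{\times 2}$, so the correct relation is $d_2d_6h_5h_6h_7=\1_2\times\1_p$. The error is harmless for part $(2)$: all that is used there is that $d_5$ and $d_6$ lie in the span of $d_1,\dots,d_4,h_1,\dots,h_7$, which the corrected relation still provides, after which your elimination scheme goes through exactly as in \cref{H+JS11}'s printed proof. But a faithful execution of your own first step would have uncovered this discrepancy, and your proposal, as written, asserts an identity that is false.
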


\begin{proof}
\begin{enumerate}
\item We can check it by direct calculation.
\item By $(1)$, \cref{Jp11,J211,H11}, $(\mathrm{Im}(\delta_{2}) \times\mathrm{Im}(\delta_{p}))+\res_S({\val}^{-1}(G))$ is generated by $d_1, \ldots d_4, h_1,  \ldots , h_7$. 
Set
\begin{align*}
d_1' &:= d_1d_2d_3 =(1;-3+T_2;2T_3(T_3-3))_2 \times\1_p, \\ 
d_2 &= (6;T_2;2)_2 \times\1_p, \\
d_3 &=  (3;3-T_2;3-T_3)_2 \times\1_p, \\
d_4 &= (5;3; 2-2T_3)_2 \times\1_p, \\
h_1' &:= h_1h_5=(1;1;-2)_2 \times\1_p,\\
h_2' &:= h_2h_4 = (1;-2;1)_2 \times\1_p, \\
h_3' &:= h_3 h_6 =(-2;T_2;T_3)_2 \times(1;T_2;T_3)_p,\\
h_4&=\res_S(1;2;1)=(1;2;1)_2 \times(1;2;1)_p, \\
h_5&=\res_S(1;1;2)=(1;1;2)_2 \times(1;1;2)_p, \\
h_6 &= \res_S(2;T_2;T_3)=(2;T_2;T_3)_2 \times(2;T_2;T_3)_p, \\
h_7 &:= \res_S(p;1;T_3)=(3;1;T_3)_2 \times(p;1;T_3)_p.
\end{align*}
It is sufficient to prove that the above eleven elements are linearly independent.
\begin{enumerate}
\item By taking the first components at $v=p$ into account, we see that there is no relation containing $h_6$ and $h_7$. 
\item By taking the second components at $v=p$ into account, we see that there is no relation containing $h_3'$ and $h_4$.
\item By taking the third components at $v=p$ into account, we see that there is no relation containing $h_5$.
\item By taking the first components at $v=2$ into account, we see that there is no relation containing $d_2$, $d_3$ and $d_4$.
\item By taking the second components at $v=2$ into account, we see that there is no relation containing $d_1'$ and $h_2'$ because
\begin{itemize} 
\item $-3+T_2 \equiv T_2+1 \pmod{T_2^2}$, 
\item $-2$ is non-square in $L_2^{(2)} \simeq \mathbb{Q}_2(\sqrt{-6})$,
\item $-2(-3+T_2) \equiv T_2^2(T_2+1) \pmod{T_2^4}$.
\end{itemize}
\item Finally, by taking the third components at $v=2$ into account, we see that there is no relation containing $h_1'$. 
\end{enumerate}
\end{enumerate}
\end{proof}
Recall that 
\begin{align*}
\dim \Sel(\mathbb{Q},J^{(p; i, j)}) =& \dim {\val}^{-1}(G) + \dim(\mathrm{Im}(\delta_{2}) \times\mathrm{Im}(\delta_{p})) \\
&- \dim((\mathrm{Im}(\delta_{2}) \times\mathrm{Im}(\delta_{p}))+\res_S({\val}^{-1}(G)))
\end{align*}
and \[J(\mathbb{Q})/2J(\mathbb{Q}) \simeq \mathbb{F}_2^{\rank(J(\mathbb{Q}))} \oplus J(\mathbb{Q})[2].\]
Therefore, by \cref{2 torsion11,Jp11,J211,H11,H+JS11}, we obtain
\begin{align*}
\rank(J(\mathbb{Q}))  \leq \dim \Sel(\mathbb{Q},J)-\dim J(\mathbb{Q})[2]=7+6 -11-2=0. 
\end{align*}

This completes the proof of \cref{rank=0} $(2)$.


\subsection{Case $(i,j)=(0,2)$, $p \equiv 3 \pmod{8}$} 

Suppose that $p \equiv 3 \pmod{8}$. 
Then, we have the following irreducible decompositions:
\[\begin{cases}
L_2=\mathbb{Q}_2[T_1]/(T_1) \times \mathbb{Q}_2[T_2]/(T_2^2+p^2) \times \mathbb{Q}_2[T_3]/(T_3^2+2p^2), \\
L_p=\mathbb{Q}_p[T_1]/(T_1) \times \mathbb{Q}_p[T_2]/(T_2^2+p^2) \times \mathbb{Q}_p \times \mathbb{Q}_p.
\end{cases}\] 
Here, we fix an element $\alpha \in \mathbb{Q}_p$ such that ${\alpha}^2=-2$, which is denoted by $\sqrt{-2}$.
Moreover, we also fix an isomorphism $L_p^{(3)} \simeq 
\mathbb{Q}_p \times \mathbb{Q}_p$ which sends $T_3$ to $(p\sqrt{-2},-p\sqrt{-2})$. 
According to this irreducible decomposition, we denote each element in $L_p$ by the form $(\alpha_1;\alpha_2;\alpha_{3,1},\alpha_{3,2})$.

\begin{lemma}  \label{2 torsion02}
The following two elements form an $\mathbb{F}_{2}$-basis of $J(\mathbb{Q})[2]$ and $J(\mathbb{Q}_{v})[2]$ for $v = 2, \infty$: 
\begin{align*}
 (0,0) &- \infty, & \sum_{\substack{P \in C(\ol{\mathbb{Q}}) \\ x_{P}^{2}+p^2 = 0}}P &- 2\infty. 
 \end{align*}
On the other hand, the following three elements form an $\mathbb{F}_{2}$-basis of $J(\mathbb{Q}_{p})[2]$:
\begin{align*}
(0,0) &- \infty,& (p\sqrt{-2}, 0) &-\infty,&  (-p\sqrt{-2}, 0)&-\infty.
 \end{align*}
In particular, we have the following table.
\begin{table}[ht]
    \begin{tabular}{|l||l|l|} \hline
  $v$   & $\dim J(\mathbb{Q}_{v})[2]$ & $\dim \mathrm{Im}(\delta_{v}) $ 
  \\ \hline \hline
     $2$   & $2$ & $4$      \\ \hline
     $p$   & $3$   & $3$       \\ \hline
      $\infty$   & $2$ & $0$   \\ \hline  
    \end{tabular}
\end{table}
\end{lemma}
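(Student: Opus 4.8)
The plan is to follow the same two-step template already used in \cref{2 torsion01,2 torsion11}: first determine the rational and local $2$-torsion of $J$ from the factorization of $f(x)=x(x^2+p^2)(x^2+2p^2)$, and then read off $\dim\mathrm{Im}(\delta_v)$ from the standard local dimension formula. For the first statement I would invoke \cite[Lemma 5.2]{Stoll2014}, which identifies $J(K)[2]$ (for $K=\mathbb{Q},\mathbb{Q}_2,\mathbb{Q}_p,\mathbb{R}$) with the $\mathbb{F}_2$-span of the divisor classes $\sum_{h(\alpha)=0}(\alpha,0)-(\deg h)\infty$ attached to the monic irreducible factors $h$ of $f$ over $K$, subject to the single relation that the sum over all factors vanishes. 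This relation is exactly $[\mathrm{div}(y)]=0$, which holds since $\deg f=5$ is odd. Consequently $\dim_{\mathbb{F}_2}J(K)[2]$ equals the number of irreducible factors of $f$ over $K$ minus $1$, and any all-but-one of the factor divisors form a basis; so the whole computation reduces to factoring $f$ over each local field.

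First I would record the factorizations. Over $\mathbb{Q}$, $\mathbb{Q}_2$, and $\mathbb{R}$ I claim both $x^2+p^2$ and $x^2+2p^2$ remain irreducible, i.e. neither $-1$ nor $-2$ is a square there: over $\mathbb{R}$ this is immediate, and over $\mathbb{Q}_2$ it is the standard fact that $-1,-2\notin\mathbb{Q}_2^{\times 2}$ (note that $-p^2$ and $-2p^2$ differ from $-1$ and $-2$ by the square $p^2$). Hence $f$ has the three factors $x$, $x^2+p^2$, $x^2+2p^2$, giving $\dim J(\cdot)[2]=2$ with the stated basis $(0,0)-\infty$ and $\sum_{x_P^2+p^2=0}P-2\infty$. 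Over $\mathbb{Q}_p$ the behavior differs precisely because $p\equiv 3\pmod 8$: then $\left(\tfrac{-2}{p}\right)=\left(\tfrac{-1}{p}\right)\left(\tfrac{2}{p}\right)=(-1)(-1)=1$ while $\left(\tfrac{-1}{p}\right)=-1$, so by Hensel's lemma $-2$ is a square in $\mathbb{Q}_p$ (splitting $x^2+2p^2$) but $-1$ is not (keeping $x^2+p^2$ irreducible). Thus $f$ acquires four factors $x$, $x^2+p^2$, $x-p\sqrt{-2}$, $x+p\sqrt{-2}$, yielding $\dim J(\mathbb{Q}_p)[2]=3$ with basis $(0,0)-\infty$, $(p\sqrt{-2},0)-\infty$, $(-p\sqrt{-2},0)-\infty$.

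For the table, once the column $\dim J(\mathbb{Q}_v)[2]=2,3,2$ for $v=2,p,\infty$ is in hand, the column $\dim\mathrm{Im}(\delta_v)$ follows at once from the local formula $\dim_{\mathbb{F}_2}\mathrm{Im}(\delta_v)=\dim_{\mathbb{F}_2}J(\mathbb{Q}_v)[2]+\varepsilon_v$ with $\varepsilon_v=0,+2,-2$ according to whether $v\nmid 2\infty$, $v=2$, or $v=\infty$ (cf. \cite[p. 451, proof of Lemma 3]{FPS}); substituting gives $4,3,0$, as asserted.

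The computation is essentially routine, and the one point demanding care is the local splitting at $p$: it is exactly the quadratic-residue calculation showing that $x^2+2p^2$ factors over $\mathbb{Q}_p$ while $x^2+p^2$ does not that raises $\dim J(\mathbb{Q}_p)[2]$ from $2$ to $3$ and thereby distinguishes this case from the cases treated in \cref{2 torsion01,2 torsion11}, where the full $2$-torsion over $\mathbb{Q}_p$ had dimension $2$.
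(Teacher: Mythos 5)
Your proposal is correct and follows essentially the same route as the paper: the first two statements via \cite[Lemma 5.2]{Stoll2014} together with the observation that $-1$ and $-2$ are non-squares in $\mathbb{Q}_2$ and $\mathbb{R}$ while $-2$ (but not $-1$) is a square in $\mathbb{Q}_p$ for $p\equiv 3\pmod 8$, and the table via the formula $\dim_{\mathbb{F}_2}\mathrm{Im}(\delta_v)=\dim_{\mathbb{F}_2}J(\mathbb{Q}_v)[2]+\varepsilon_v$ from \cite[p.~451, proof of Lemma 3]{FPS}. Your explicit quadratic-residue computation $\left(\tfrac{-2}{p}\right)=1$, $\left(\tfrac{-1}{p}\right)=-1$ just spells out the splitting fact the paper states without proof.
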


\begin{proof}

The first and second statements follow from \cite[Lemma 5.2]{Stoll2014}.
Note that
\begin{itemize}
\item neither $-1$ nor $-2$ is square in $\mathbb{Q}_{v}$ for $v = 2,  \infty$.
\item $-2$ is square in $\mathbb{Q}_{p}$ and $-1$ is not square in $\mathbb{Q}_{p}$.
\end{itemize}
The third statement follows from the following formula (cf. \cite[p. 451, proof of Lemma 3]{FPS}).
\[\dim_{\mathbb{F}_2} \mathrm{Im}(\delta_{v}) =\dim_{\mathbb{F}_2} J({\mathbb{Q}_{v}})[2]
\begin{cases}
+0 & (v \neq 2, \infty), \\
+2 & (v = 2), \\
-2 & (v = \infty). \\
\end{cases}\]
\end{proof}

\begin{lemma} 
\label{Jp02} 
The following three elements form a basis of $L_p^{\times}/L_p^{\times 2}$ form an $\mathbb{F}_{2}$-basis of $\mathrm{Im}(\delta_{p})$:
 \begin{align*}
  (2;T_2;-p \sqrt{-2},p \sqrt{-2}),&& (p\sqrt{-2};p \sqrt{-2}-T_2;1,-p\sqrt{-2}), && (-1;1;p \sqrt{-2}, -p\sqrt{-2}). 
 \end{align*}
In particular, the following three elements of $I_p(L)/I_p(L)^2$ form a basis of $G_p$:
\begin{align*}
((1); (p); (p),(p))_p,&& ((p); (p); (1),(p))_p,&& ((1); (1); (p),(p))_p.
\end{align*}
\end{lemma}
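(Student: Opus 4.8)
My plan is to follow the same two-stage template already established in \cref{Jp01,J211}: first exhibit explicit divisors whose images under $\delta_p$ give the three stated elements of $L_p^{\times}/L_p^{\times 2}$, then verify $\mathbb{F}_2$-linear independence so that (by the dimension count $\dim \mathrm{Im}(\delta_p)=3$ from \cref{2 torsion02}) they form a basis. Finally I would compute $\val_p$ of each to read off the basis of $G_p$.

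For the first stage, the crucial new feature here is that at $v=p$ the third factor splits as $\mathbb{Q}_p \times \mathbb{Q}_p$ because $-2$ is a square in $\mathbb{Q}_p$; this gives $\dim J(\mathbb{Q}_p)[2]=3$, so unlike the previous cases I have \emph{three} independent $2$-torsion classes to hit, namely $(0,0)-\infty$, $(p\sqrt{-2},0)-\infty$, and $(-p\sqrt{-2},0)-\infty$. I would apply \cref{image of 2-torsion} to each. For $(0,0)-\infty$, the factor $h(x)=x$ is linear, giving $\delta_p((0,0)-\infty) = -\bigl(x - f(x)/x\bigr)$ evaluated in $L_p$, which should produce $(2;T_2;-p\sqrt{-2},p\sqrt{-2})$ after reducing the second and third components modulo squares. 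For the split roots $x = \pm p\sqrt{-2}$, each gives a linear factor over $\mathbb{Q}_p$, and \cref{image of 2-torsion} yields the corresponding single-point images; the stated generators $(p\sqrt{-2};p\sqrt{-2}-T_2;1,-p\sqrt{-2})$ and $(-1;1;p\sqrt{-2},-p\sqrt{-2})$ should arise as images of $(p\sqrt{-2},0)-\infty$ and of a suitable combination. I expect the bookkeeping to require care: one must evaluate $f(x)/h(x)$ at each root and interpret the answer componentwise in $\prod_i L_p^{(i)}$, tracking which sign of $\sqrt{-2}$ lands in which copy of $\mathbb{Q}_p$ under the fixed isomorphism $T_3 \mapsto (p\sqrt{-2},-p\sqrt{-2})$.

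For linear independence I would argue as in the earlier lemmas by inspecting components where valuations or residues detect nontriviality. Since $-1$ is nonsquare in $\mathbb{Q}_p$ and $p$ is a uniformizer, the first components ($2$, $p\sqrt{-2}$, $-1$) together with the valuation data in the second and split third components should rule out every nontrivial relation; concretely, $v_{T_2}(T_2)=1$, $v_p(p\sqrt{-2})=1$, and the two coordinates of the split factor give independent $\mathbb{Z}/2$ valuations, so no product of a proper subset can be trivial in $L_p^{\times}/L_p^{\times 2}$.

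The main obstacle will be the second stage, computing $\val_p$ correctly on the split factor $L_p^{(3)} \simeq \mathbb{Q}_p \times \mathbb{Q}_p$. Here $\val_p$ records the valuation in each of the two copies of $\mathbb{Q}_p$ separately, so an element like $(p\sqrt{-2},p\sqrt{-2})$ or $(1,-p\sqrt{-2})$ must be assigned its pair of $p$-adic valuations $(v_p, v_p)$ and reduced modulo $2$; since $v_p(p\sqrt{-2})=1$ and $v_p(1)=0$, I must confirm these yield the claimed $(p),(p)$ versus $(1),(p)$ entries. For the second factor $L_p^{(2)}=\mathbb{Q}_p[T_2]/(T_2^2+p^2)$, note this is \emph{not} a field extension in the usual ramified sense—$T_2^2 = -p^2$ means $T_2 = \pm p\sqrt{-1}$ and $-1$ is nonsquare, so $L_p^{(2)}$ is the unramified quadratic extension with $T_2$ having valuation $1$; hence $\val_p(T_2)=(p)$ and $\val_p(p\sqrt{-2}-T_2)$ needs a norm/trace computation analogous to those in \cref{J201}. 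Once these valuations are pinned down, reading off that the three images are $((1);(p);(p),(p))_p$, $((p);(p);(1),(p))_p$, and $((1);(1);(p),(p))_p$, and checking they are independent in $I_p(L)/I_p(L)^2$, is routine.
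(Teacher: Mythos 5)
Your proposal is correct and follows essentially the same route as the paper: apply \cref{image of 2-torsion} to the Weierstrass-point divisors supported at $x=0$ and $x=\pm p\sqrt{-2}$, deduce independence from the valuation vectors (i.e.\ the images in $G_p$), and conclude via the dimension count $\dim \mathrm{Im}(\delta_p)=3$ from \cref{2 torsion02}. The only cosmetic difference is that the paper computes the image of the degree-two divisor $\sum_{x_P^2+2p^2=0}P-2\infty$ directly (whose image is the third stated generator), whereas you treat the two split points separately and recover that generator as their product; your valuation computations ($T_2$ of valuation $1$ in the unramified factor, the pair of valuations in the split factor, and the norm argument for $p\sqrt{-2}-T_2$) match the paper's conclusions.
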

\begin{proof}
\cref{image of 2-torsion} implies that 
\begin{align*}
 \delta_p((0,0)-\infty) &= -T+(T^2+p^2)(T^2+2p^2)=(2;T_2;-p \sqrt{-2},p \sqrt{-2}),\\
 \delta_p((p\sqrt{-2},0)-\infty) &= -(T-p\sqrt{-2})+T(T+p\sqrt{-2})(T^2+p^2) \\
 &= (p\sqrt{-2};p \sqrt{-2}-T_2;1,-p\sqrt{-2}),\\
  \delta_p \left(\sum_{\substack{P \in C(\ol{\mathbb{Q}}) \\ x_{P}^{2}+2p^2 = 0}}P- 2\infty \right) &=(T^2+2p^2)-T(T^2+p^2) = (-1;1;p \sqrt{-2}, -p\sqrt{-2}).
\end{align*}

Hence, the above three elements lie in $\mathrm{Im}(\delta_{p})$. 
By taking their images in $G_p$ into account, we see that they are linearly independent.
By \cref{2 torsion02}, this completes the proof.
\end{proof}

\begin{lemma} 
\label{J202}
The following four elements of $L_2^{\times}/L_2^{\times 2}$ form an $\mathbb{F}_{2}$-basis of $\mathrm{Im}(\delta_{2})$:
\begin{align*}
(2;T_2;-T_3), && (1;T_2;-1),&&  (-2;2+T_2;-2-T_3), && (1;1-4T_2;1). 
\end{align*}
In particular, the following element of $I_2(L)/I_2(L)^2$ forms an $\mathbb{F}_{2}$-basis of $G_2$: 
\[((2);(1);(T_3))_2.\]
 \end{lemma}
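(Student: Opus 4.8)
The four elements
\[
(2;T_2;-T_3),\quad (1;T_2;-1),\quad (-2;2+T_2;-2-T_3),\quad (1;1-4T_2;1)
\]
form an $\mathbb{F}_2$-basis of $\mathrm{Im}(\delta_2)$, and $((2);(1);(T_3))_2$ is an $\mathbb{F}_2$-basis of $G_2$.

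The plan is to follow exactly the template established in the proofs of \cref{J201,J211}: first exhibit each of the four claimed elements as $\delta_2$ of an explicit divisor class on $J(\mathbb{Q}_2)$, then prove the four are $\mathbb{F}_2$-linearly independent, and finally compute $\val_2$ of each to read off $G_2$. The dimension count is already settled: \cref{2 torsion02} gives $\dim_{\mathbb{F}_2}\mathrm{Im}(\delta_2)=4$, so once I produce four independent elements lying in the image I am done with the first statement.

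First I would realize the elements as images of divisors. For the two $2$-torsion classes, \cref{image of 2-torsion} applies directly: computing $\delta_2((0,0)-\infty)=-T+(T^2+p^2)(T^2+2p^2)$ and reducing the components modulo squares should yield $(2;T_2;-T_3)$, while $\delta_2$ of the class $\sum_{x_P^2+2p^2=0}P-2\infty$ gives $(T^2+2p^2)-T(T^2+p^2)$, which I expect to reduce to $(1;T_2;-1)$. For the remaining two elements I would locate rational points or conjugate pairs: seek $Q\in C(\mathbb{Q}_2)$ with a convenient $x$-coordinate so that $f(x_Q)$ is a square in $\mathbb{Q}_2$ (checking $f(x_Q)\equiv\square\pmod{32}$ and invoking Hensel), giving $\delta_2(Q-\infty)=(x_Q-T)$; and for the last element construct a conjugate pair $R,\ol R$ over a quadratic extension of $\mathbb{Q}_2$ by finding a uniformizer $\tau$ and an $x$-value of the form $\tau$-polynomial for which $f$ becomes a square modulo a high power of $\tau$, exactly as in the $R+\ol R-2\infty$ constructions of \cref{J201,J211}. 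The precise $x$-coordinates yielding $(-2;2+T_2;-2-T_3)$ and $(1;1-4T_2;1)$ are the calculation to pin down.

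For independence I would argue by first components together with the $T_3$- or $T_2$-adic expansions, the same device used throughout: project to the first slot in $\mathbb{Q}_2^{\times}/\mathbb{Q}_2^{\times2}$ to kill most combinations, then for the surviving candidate relation test it against a component modulo $T_2^2$ or $T_3^2$ to derive a contradiction (e.g.\ checking that a product like $(-2-T_3)\cdot 1\not\equiv 1\pmod{T_3^2}$). For the $G_2$-computation I would pass to $\val_2$, using norm and trace valuations as in \cref{J201} to determine $v_2$ on each nontrivial component: the first slot is trivial in $I_2(L)$-terms, $T_3$ is a uniformizer of $L_2^{(3)}$, and the middle components should all have even valuation and hence drop out, leaving the single generator $((2);(1);(T_3))_2$. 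The main obstacle is purely computational—correctly identifying the Hensel-liftable $x$-coordinates (and the correct quadratic extension of $\mathbb{Q}_2$) that produce the third and fourth basis elements, and carrying the square-class reductions faithfully; the conceptual structure is entirely parallel to the already-proven cases.
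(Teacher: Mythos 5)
Your overall template is indeed the paper's own (two $2$-torsion classes via \cref{image of 2-torsion}, extra points found by Hensel's lemma, independence via components, then $\val_2$), but the proposal breaks down at precisely the two points where this case $(i,j)=(0,2)$ differs from \cref{J201,J211}, and both are left unresolved. First, the fourth element $(1;1-4T_2;1)$ does \emph{not} arise from a conjugate pair $R+\ol{R}-2\infty$ over a quadratic extension of $\mathbb{Q}_2$ as you propose; in the paper it is $\delta_2(R-\infty)$ for a $\mathbb{Q}_2$-rational point $R$ with the \emph{non-integral} $x$-coordinate $x_R=1/4$: one checks $2^{10}f(1/4)\equiv 1\pmod{8}$, applies Hensel's lemma, and notes that $1/4-T\equiv 1-4T$ modulo squares, the third component $1-4T_3$ being a square in $L_2^{(3)}\simeq \mathbb{Q}_2(\sqrt{-2})$. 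Your search procedure --- integral $x$-coordinates tested modulo $32$, plus quadratic-extension pairs --- would not produce this point, and you explicitly defer ``the calculation to pin down,'' which is the crux of the lemma. (A smaller, recoverable slip: the $2$-torsion class supported on $x^2+2p^2=0$ maps to $(2;1;T_3)$, which is the \emph{product} of the first two claimed elements, not to $(1;T_2;-1)$; the paper instead uses the class supported on $x^2+p^2=0$, which does map to $(1;T_2;-1)$.)

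Second, and more seriously, your independence device cannot work here. In this case $T_2^2=-p^2$ is a unit, so $L_2^{(2)}\simeq\mathbb{Q}_2(\sqrt{-1})$ and $T_2$ is a \emph{unit}, not a uniformizer (unlike the $j=1$ cases \cref{J201,J211}); hence ``testing modulo $T_2^2$'' is vacuous, since the ideal $(T_2^2)$ is the unit ideal. Because $(1;1-4T_2;1)$ has trivial first and third components, the entire independence claim hinges on showing that the unit $1-4T_2$ is a nonsquare in $\mathbb{Q}_2(\sqrt{-1})$. This is genuinely delicate: $1-4T_2\equiv 1$ modulo the fourth power of a uniformizer, one step short of the range in which every unit of a ramified quadratic extension of $\mathbb{Q}_2$ is automatically a square. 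The paper settles it by a separate computation, expanding a hypothetical square root $(1+a_1t+a_2t^2+\cdots)^2$ in an honest uniformizer $t$ of $L_2^{(2)}$, using $2\equiv t^2+b_1t^3\pmod{t^4}$, and deriving the contradiction $a_1=0$, $a_2+a_2^2\equiv 1\pmod{2}$. Without this step (or an equivalent substitute), your argument cannot rule out that $(1;1-4T_2;1)$ is trivial, so the four elements are not shown to be independent and the lemma is not proved.
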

\begin{proof}
First, we show that the above four elements actually lie in $\mathrm{Im}(\delta_{2})$.
\cref{image of 2-torsion} implies that
\begin{align*}
\delta_2((0,0)-\infty) &= -T+(T^2+p^2)(T^2+2p^2) = (2;T_2;-T_3),\\ 
\delta_2 \left(\sum_{\substack{P \in C(\ol{\mathbb{Q}}) \\ x_{P}^{2}+p^2 = 0}}P - 2\infty \right) &= (T^2+p^2)-T(T^2+2p^2) = (1; T_{2}; -1).\\ 
\end{align*}
Hence, $(2;T_2;-T_3)$, $(1;T_2;-1) \in \mathrm{Im}(\delta_{2})$. 
\begin{itemize}
\item  Since $f(-2) \equiv 4 \pmod {32}$, there exists $Q \in C(\mathbb{Q}_2)$ such that $x_Q=-2$.
In particular, $(-2;2+T_2;-2-T_3)=\delta_2(Q-\infty)$ lies in $\mathrm{Im}(\delta_{2})$.
\item  Since $2^{10}f(1/4) \equiv 1 \pmod{8}$, there exists $R \in C(\mathbb{Q}_2)$ such that $x_R=1/4$.
In particular, $(1;1-4T_2;1)=\delta_2(R-\infty)$ lies in $\mathrm{Im}(\delta_{2})$.
\end{itemize}
Since $v_2(2)=v_2(-2)=1$, the first and third elements are non-trivial in $L^{\times}/L^{\times 2}$. 
Since $-1$ is non-trivial in $L_2^{(3) \times}/L_2^{(3) \times 2}$, the second element is  non-trivial in $L^{\times}/L^{\times 2}$. 
In what follows, we show that $1-4T_2$ is non-trivial in $L^{(2) \times}/L^{(2) \times 2}$ by contradiction.

Let $t$ be a uniformizer in $L_2^{(2)}$, and $b_1$, $b_2 \in \{0,1\}$ such that $2 \equiv t^2+b_1t^3 \pmod{t^4}$.  
Suppose that $1-4T_2 \equiv (1+a_1t+a_2t^2+\cdots)^2 \pmod{t^5}$ with $a_1$, $a_2 \in \{0,1\}$.
Then, we have \[1+t^4 \equiv 1+a_1^2t^2+a_1t^3+(a_2+a_2^2+a_1b_1)t^4 \pmod{t^5},\]
which implies that $a_1=0$ and $a_2+a_2^2=1 \pmod{2}$, a contradiction.
Thus, $(1;1-4T_2;1)$ is non-trivial in $L^{\times}/L^{\times 2}$. 

Finally, by taking the first and third components into account, the four elements in the statement are linearly independent.
\end{proof}

\begin{corollary} \label{V02}
The following three elements of $L^{\times}/ L^{\times 2}$ form an $\mathbb{F}_{2}$-basis of $\Ker(\val)$: 
 \begin{align*}
 (1;1;-1),&& (1;T_2/p;1),&& (-1;1;1).
 \end{align*}
\end{corollary}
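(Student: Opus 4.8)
The plan is to reproduce the proof of \cref{V01} at the level of the diagram chase, and to replace the arithmetic of the three étale-algebra factors by the ones occurring here. Running the identical commutative diagram and snake lemma as in \cref{V01} (the argument is formal and does not depend on the particular $(i,j)$), I obtain the short exact sequence
\[1 \to \mathcal{O}_L^{\times}/\mathcal{O}_L^{\times 2} \to \Ker(\val) \to \Cl(L)[2] \to 0.\]
It then suffices to compute the two outer terms, using that $L^{(1)} \simeq \mathbb{Q}$, $L^{(2)} = \mathbb{Q}[T_2]/(T_2^2+p^2) \simeq \mathbb{Q}(i)$, and $L^{(3)} = \mathbb{Q}[T_3]/(T_3^2+2p^2) \simeq \mathbb{Q}(\sqrt{-2})$.

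First I would determine the unit part factor by factor, via $\mathcal{O}_L^{\times}/\mathcal{O}_L^{\times 2} = \prod_{i=1}^3 \mathcal{O}_{L^{(i)}}^{\times}/\mathcal{O}_{L^{(i)}}^{\times 2}$. Since $L^{(1)} \simeq \mathbb{Q}$ and $L^{(3)} \simeq \mathbb{Q}(\sqrt{-2})$ each have unit group $\{\pm 1\}$, they contribute the classes $(-1;1;1)$ and $(1;1;-1)$. The point genuinely special to this case is $L^{(2)} \simeq \mathbb{Q}(i)$, whose unit group is $\{\pm 1, \pm i\}$: here $-1 = i^2$ is already a square, so the nontrivial class in $\mathcal{O}_{L^{(2)}}^{\times}/\mathcal{O}_{L^{(2)}}^{\times 2}$ is represented by $i$, that is, by $T_2/p$ (note $(T_2/p)^2 = -1$ and $T_2/p$ has norm $1$). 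This is precisely why the second basis vector is $(1;T_2/p;1)$ rather than the $(1;-1;1)$ that appeared in the earlier cases. Hence the three displayed elements form an $\mathbb{F}_2$-basis of $\mathcal{O}_L^{\times}/\mathcal{O}_L^{\times 2}$, and they lie in $\Ker(\val)$ because units have trivial valuation.

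Next I would show $\Cl(L)[2] = 0$ by \cref{Genus theory}$(2)$. The factor $L^{(1)} \simeq \mathbb{Q}$ is a PID, so $\Cl(L^{(1)})[2] = 0$. For $L^{(2)} \simeq \mathbb{Q}(i)$ of discriminant $D = -4$ and $L^{(3)} \simeq \mathbb{Q}(\sqrt{-2})$ of discriminant $D = -8$, in each case $D \equiv 0 \pmod{4}$ and the number $r$ of odd primes dividing $D$ is $0$, so \cref{Genus theory}$(2)$ gives $\# \Cl(L^{(i)})[2] = 2^0 = 1$ for $i = 2, 3$. Therefore $\Cl(L)[2] = 0$, the right-hand map of the short exact sequence is trivial, and $\Ker(\val) = \mathcal{O}_L^{\times}/\mathcal{O}_L^{\times 2}$, which completes the proof.

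I do not expect a serious obstacle: the argument is structurally identical to \cref{V01}, and the two genus-theory computations both return trivial $2$-torsion because neither discriminant has an odd prime divisor. The only step demanding care is the unit computation for $L^{(2)} \simeq \mathbb{Q}(i)$, where the extra fourth root of unity collapses the class of $-1$ and forces the substitution of $T_2/p$ in the second slot.
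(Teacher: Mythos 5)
Your proof is correct and takes essentially the same route as the paper: the identical snake-lemma exact sequence, the same unit computation (with $T_2/p$ playing the role of $i$ under $L^{(2)} \simeq \mathbb{Q}(i)$, which is exactly why $(1;T_2/p;1)$ replaces $(1;-1;1)$), and the vanishing of $\Cl(L)[2]$. The only cosmetic difference is in the last step: the paper simply quotes that $\mathbb{Q}$, $\mathbb{Q}(i)$, and $\mathbb{Q}(\sqrt{-2})$ have class number $1$, whereas you deduce $\Cl(L^{(i)})[2]=0$ for $i=2,3$ from \cref{Genus theory}$(2)$ applied to the discriminants $-4$ and $-8$; both justifications are immediate and equally valid.
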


\begin{proof}
By the exactly same manner as in \cref{V01,V11}, we obtain a short exact sequence
\[1 \to \mathcal{O}_L^{\times}/\mathcal{O}_L^{\times 2} \to \Ker(\val) \to \Cl(L)[2] \to 0.\] 
The lemma follows from the facts that $(1;1;-1)$, $(1;T_2/p;1)$ and $(-1;1;1)$ form an $\mathbb{F}_2$-basis of $\mathcal{O}_L^{\times}/\mathcal{O}_L^{\times 2}$, and
the class number of $L^{(i)}$ is $1$ for every $i=0$, $1$, $2$.
\end{proof}

\begin{lemma} \label{W02} 
The following four elements of $I(L)/I(L)^2$ form an $\mathbb{F}_{2}$-basis of $W=G$: 
\begin{align*}
((2);(1);(T_3))_2 \times \1_p,&&
\1_2 \times ((1); (p); (p),(p))_p, &&\\
 \1_2 \times ((p); (p); (1),(p))_p, &&
\1_2 \times ((1); (1); (p),(p))_p.
\end{align*}
\end{lemma}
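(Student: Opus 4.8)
The plan is to prove that $W = G$ first, and then verify that the four displayed elements form an $\mathbb{F}_2$-basis of $G$. Recall that $W = \Ker(G \to \Cl(L)/\Cl(L)^2)$, so the equality $W = G$ will follow immediately once I show that every element of $G$ maps to the trivial class in $\Cl(L)/\Cl(L)^2$. By \cref{V02}, the class number of each factor $L^{(i)}$ is $1$ (for $i = 1, 2, 3$), so $\Cl(L) = \prod_{i=1}^{3} \Cl(L^{(i)})$ is trivial, whence $\Cl(L)/\Cl(L)^2$ is trivial. Therefore the map $G \to \Cl(L)/\Cl(L)^2$ is the zero map and $W = G$ automatically. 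This is the conceptual heart of why this case is simpler than the cases $(0,1)$ and $(1,1)$, where $L^{(3)}$ (respectively $L^{(2)}$) had a nontrivial $2$-torsion class coming from \cref{2Cl}.

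Next I would assemble an $\mathbb{F}_2$-basis of $G = \prod_{v \in \{2, p\}} G_v$ by combining the bases of $G_2$ and $G_p$ already computed. By \cref{J202}, the group $G_2$ is one-dimensional with basis $((2);(1);(T_3))_2$, and by \cref{Jp02}, the group $G_p$ is three-dimensional with basis
\[
((1); (p); (p),(p))_p,\quad ((p); (p); (1),(p))_p,\quad ((1); (1); (p),(p))_p.
\]
Since $G_2$ and $G_p$ sit in independent factors $I_2(L)$ and $I_p(L)$ of $I(L)$ (which we have agreed to identify with their image), the product $G = G_2 \times G_p$ has dimension $1 + 3 = 4$, and a basis is obtained by padding each basis element of $G_2$ with $\1_p$ and each basis element of $G_p$ with $\1_2$. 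This yields precisely the four elements in the statement.

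The combination of these two observations completes the proof: the four displayed elements form a basis of $G$ by the product decomposition, and $G = W$ because $\Cl(L)/\Cl(L)^2$ is trivial. I expect essentially no serious obstacle here; the only point requiring care is confirming that all three class numbers $\#\Cl(L^{(i)})$ really are $1$ in this case, which is exactly the input recorded in the proof of \cref{V02}. (Note that here $L^{(3)}_p \simeq \mathbb{Q}_p \times \mathbb{Q}_p$ splits, which is what changes the local structure at $p$ relative to the earlier cases, but this does not affect the global class groups governing $W$.) Once that is granted, the argument is purely formal and considerably shorter than the analogous \cref{W01,W11}, where the nontriviality of $\Cl(L^{(3)})[2]$ (resp. $\Cl(L^{(2)})[2]$) forced one to exhibit an explicit element of $G$ lying outside $W$.
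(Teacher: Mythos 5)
Your proposal is correct and takes essentially the same route as the paper: since the class number of each $L^{(i)}$ is $1$ (as recorded in the proof of \cref{V02}), the group $\Cl(L)/\Cl(L)^2$ is trivial and hence $W=G$, after which the stated basis is just the product of the bases of $G_2$ and $G_p$ from \cref{J202,Jp02}. The paper's proof is a two-line version of exactly this argument, so there is nothing to add.
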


\begin{proof}
Since the class number of $L^{(i)}$ is $1$ for every $i=0$, $1$, $2$, we have $W=G$. Thus, the lemma follows from \cref{Jp02,J202}.
\end{proof}

Recall that there exists an exact sequence $1 \to \Ker(\val) \to {\val}^{-1}(G) \to W \to 1$. 
By \cref{V02,W02}, we have the following lemma.
\begin{lemma} \label{H02}
Fix $a,b \in \mathbb{Z}$ such that $p=a^2+2b^2$ in $L^{(3)}$. 
Then, the following seven elements of $L^{\times}/L^{\times 2}$ form an $\mathbb{F}_{2}$-basis of ${\val}^{-1}(G)$: 
\begin{align*}
(1;1;-1),&& (1;T_2/p;1),&& (-1;1;1), && (2;1;T_3),&& (1;p;p),&& (p;p;a+b \cdot T_3/p),&& (1;1;p).
\end{align*}  
\end{lemma}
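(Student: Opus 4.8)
The plan is to exploit the short exact sequence
\[1 \to \Ker(\val) \to {\val}^{-1}(G) \to W \to 1\]
recalled just before the statement, in exactly the same way as in \cref{H01,H11}. Since $\Ker(\val)$ has dimension $3$ by \cref{V02} and $W = G$ has dimension $4$ by \cref{W02}, the group ${\val}^{-1}(G)$ has dimension $7$, so it suffices to exhibit seven elements that (i) all lie in ${\val}^{-1}(G)$ and (ii) are linearly independent over $\mathbb{F}_2$. The first three listed elements $(1;1;-1)$, $(1;T_2/p;1)$, $(-1;1;1)$ are precisely the basis of $\Ker(\val)$ from \cref{V02}, so they automatically lie in ${\val}^{-1}(G)$; what remains is to produce four elements of $L^\times/L^{\times 2}$ whose images under $\val$ give a basis of $W$, and to check overall independence.

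First I would compute $\val$ of each of the last four candidates and match them against the basis of $W$ from \cref{W02}. Concretely, I expect $\val(2;1;T_3) = ((2);(1);(T_3))_2 \times \1_p$ (using that $2$ ramifies only at the relevant place and $T_3$ is a uniformizer in $L_2^{(3)}$, while all components are units at $p$); $\val(1;p;p)$ should hit $\1_2 \times ((1);(p);(p),(p))_p$; $\val(p;p;a+b\cdot T_3/p)$ should hit $\1_2 \times ((p);(p);(1),(p))_p$, where the choice $p = a^2+2b^2$ guarantees that $a+b\cdot T_3/p$ maps under $L_p^{(3)} \simeq \mathbb{Q}_p \times \mathbb{Q}_p$ to a pair $(a+b\sqrt{-2},\,a-b\sqrt{-2})$ whose product is $a^2+2b^2 = p$, so that one component is a unit and the other has valuation $1$; and $\val(1;1;p)$ should hit $\1_2 \times ((1);(1);(p),(p))_p$. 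The key point is that these four images are exactly the four generators of $W$ listed in \cref{W02}, so by the surjection in the exact sequence the seven elements together span ${\val}^{-1}(G)$.

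The main obstacle is the third new element: verifying that $a + b\cdot T_3/p$ really represents the class $((p);(p);(1),(p))_p$ rather than $((p);(p);(p),(1))_p$, since the two components of $L_p^{(3)}$ are interchanged by the ambiguity in labeling the primes above $p$. Here the fixed isomorphism $L_p^{(3)} \simeq \mathbb{Q}_p \times \mathbb{Q}_p$ sending $T_3 \mapsto (p\sqrt{-2}, -p\sqrt{-2})$ pins down the order, and I would use it to compute $a + b\cdot T_3/p \mapsto (a + b\sqrt{-2},\, a - b\sqrt{-2})$ explicitly and read off which component is the unit. I would also need the representation $p = a^2 + 2b^2$ to exist at all, which holds because $p \equiv 3 \pmod 8$ is represented by the principal form of discriminant $-8$ (equivalently $-2$ is a square mod $p$ in this congruence class), matching the earlier observation in \cref{2 torsion02} that $-2$ is a square in $\mathbb{Q}_p$.

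Finally I would confirm $\mathbb{F}_2$-linear independence of all seven elements. By construction the last four have linearly independent images in $I(L)/I(L)^2$ (they form the basis of $W$), so no nontrivial combination of them lies in $\Ker(\val)$; hence any relation among the seven would have to be a relation purely among the first three, which are independent by \cref{V02}. This two-step argument — independence in $W$ after applying $\val$, then independence inside $\Ker(\val)$ — is routine once the four $\val$-computations above are in hand, so I expect no difficulty beyond the bookkeeping of the split place at $p$.
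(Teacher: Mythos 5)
Your strategy coincides with the paper's: the paper deduces \cref{H02} precisely from the exact sequence $1 \to \Ker(\val) \to {\val}^{-1}(G) \to W \to 1$ together with \cref{V02,W02}, exactly as you propose. However, one of your explicit valuation computations is wrong. Write $w_1$, $w_2$, $w_3$, $w_4$ for the four generators of $W$ listed in \cref{W02}, in that order. You claim $\val(2;1;T_3) = ((2);(1);(T_3))_2 \times \1_p = w_1$, asserting that ``all components are units at $p$''; but $T_3$ is not a unit above $p$: in $L^{(3)} \simeq \mathbb{Q}(\sqrt{-2})$ one has $T_3 = p\sqrt{-2}$, and $p$ splits in this field (indeed $-2$ is a square in $\mathbb{Q}_p$ for $p \equiv 3 \pmod 8$, as noted in \cref{2 torsion02}), so under the fixed identification $T_3 \mapsto (p\sqrt{-2}, -p\sqrt{-2})$ both components of $T_3$ have valuation $1$. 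Hence the $p$-component of $\val(2;1;T_3)$ is $((1);(1);(p),(p))_p$ rather than $\1_p$, and in fact $\val(2;1;T_3) = w_1 w_4$, not $w_1$.

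This slip does not break the argument. The corrected images of your last four elements are $w_1w_4$, $w_2$, $w_3$ (or $w_3w_4$, depending on the sign of $b$ --- the ambiguity you correctly identified and resolved via the fixed isomorphism), and $w_4$, and these still form an $\mathbb{F}_2$-basis of $W$, since replacing a basis vector by its product with another basis vector does not change the span. With this correction, your two-step argument --- linear independence of the images in $W$, plus linear independence of the first three elements inside $\Ker(\val)$ by \cref{V02} --- is complete, and it is essentially the argument the paper leaves implicit; just do not claim the images are \emph{exactly} the four listed generators of $W$.
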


\begin{lemma} \label{H+JS02}
Set elements of $L_2^{\times}/L_2^{\times 2} \times L_p^{\times}/L_p^{\times 2}$ as follows: 
\begin{align*}  
d_1 &:= (2;T_2;-T_3)_2 \times\1_p, \\
d_2 &:= (1;T_2;-1)_2 \times\1_p, \\
d_3 &:= (-2;2+T_2;-2-T_3)_2 \times\1_p, \\
d_4 &:= (1;1-4T_2;1)_2 \times\1_p, \\
d_5 &:= \1_2 \times(2; T_2; -p\sqrt{-2},p \sqrt{-2})_p, \\
d_6 &:= \1_2 \times(p \sqrt{-2}; p \sqrt{-2}-T_2; 1,-p \sqrt{-2})_p, \\
d_7 &:= \1_2 \times(-1; 1; p\sqrt{-2},-p \sqrt{-2})_p, \\
h_1 &:= \res_S(1;1;-1)=(1;1;-1)_2 \times(1;1;-1,-1)_p, \\
h_2 &:= \res_S(1; T_2/p;1)=(1;T_2/p;1)_2 \times(1;T_2/p;1,1)_p, \\
h_3 &:= \res_S(-1;1;1)=(-1;1;1)_2 \times(-1;1;1,1)_p, \\
h_4 &:= \res_S(2;1;T_3)=(2;1;T_3)_2 \times(2;1;p\sqrt{-2},-p\sqrt{-2})_p, \\
h_5 &:= \res_S(1;p;p)=(1;p;p)_2 \times(1;p;p,p)_p, \\
h_6 &:= \res_S(p;p;a+b\cdot T_3/p)=(p;p;a+b\cdot T_3/p)_2 \times(p;p;a+b\sqrt{-2},a-b\sqrt{-2})_p, \\
h_7 &:= \res_S(1;1;p)=(1;1;p)_2 \times(1;1;p,p)_p. 
\end{align*}  
\begin{enumerate}
\item We have $d_1 d_2 d_7 h_4 =d_2 d_5 d_7 h_1 h_2 h_5 h_7 = \1_2 \times\1_p.$
\item The twelve elements $d_3, \ldots, d_7$, $h_1, \ldots, h_7$ form an $\mathbb{F}_2$-basis of $(\mathrm{Im}(\delta_{2}) \times\mathrm{Im}(\delta_{p}))+\res_S({\val}^{-1}(G))$. 
\end{enumerate} 
\end{lemma}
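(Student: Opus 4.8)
The plan is to mirror the structure already established in Lemmas \ref{H+JS01} and \ref{H+JS11}: part $(1)$ is a bookkeeping verification of two multiplicative relations among the listed generators, and part $(2)$ reduces the basis claim to a linear-independence statement which is then settled by projecting onto individual components at $v = 2$ and $v = p$. First I would dispatch $(1)$ by direct computation in $L_2^{\times}/L_2^{\times 2} \times L_p^{\times}/L_p^{\times 2}$. For the first relation $d_1 d_2 d_7 h_4 = \1_2 \times \1_p$, I would multiply the $v=2$ components $(2;T_2;-T_3)(1;T_2;-1)(2;1;T_3) = (4; T_2^2; 2 T_3)$ and check that each slot is a square modulo the relevant structure (noting $4$ and $T_2^2$ are obviously square, and $2 T_3 = T_3^2 \cdot (2/T_3)$ must be reconciled using $-T_3 = \sqrt{-2p^2}$ so that $T_3^2 = -2p^2$); on the $v=p$ side, $d_7 h_4$ must cancel since $d_1, d_2$ contribute trivially at $p$. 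For the second relation I would verify $d_2 d_5 d_7 h_1 h_2 h_5 h_7 = \1$ component by component, again on both factors.

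For part $(2)$, by $(1)$ together with \cref{Jp02,J202,H02} the space $(\mathrm{Im}(\delta_{2}) \times\mathrm{Im}(\delta_{p}))+\res_S({\val}^{-1}(G))$ is spanned by the twelve elements $d_3,\ldots,d_7,h_1,\ldots,h_7$, so it suffices to prove these twelve are linearly independent over $\mathbb{F}_2$. I would follow the same elimination cascade used in the earlier lemmas: use the component projections to peel off generators one at a time. Concretely, I expect to first inspect the $v=p$ components (recalling $L_p^{(3)} \simeq \mathbb{Q}_p \times \mathbb{Q}_p$ splits, so the third slot yields two independent $\mathbb{Q}_p^{\times}/\mathbb{Q}_p^{\times 2}$ coordinates) to eliminate the generators $h_6, h_7$ and then $h_4, h_5$ via the first slot, second slot, and the two split third slots; then turn to the $v=2$ components to eliminate $d_3, d_4$ and the relevant combinations involving $h_1, h_2, h_3$. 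Because the ambient dimension count at $v=p$ has grown (here $\dim \mathrm{Im}(\delta_p) = 3$ rather than $2$, and the class numbers are all $1$ so $W = G$), the elimination at $p$ carries more weight than in the $(0,1)$ and $(1,1)$ cases.

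The main obstacle will be the same subtle step that required explicit $T_3$-adic expansions in the earlier proofs: distinguishing square from non-square in the local factor $L_2^{(3)} = \mathbb{Q}_2[T_3]/(T_3^2 + 2p^2)$, where $T_3$ is a uniformizer and the residue structure is delicate. For the $v=2$ elimination I anticipate needing congruences modulo small powers of $T_3$ (and of a uniformizer $t$ in $L_2^{(2)}$) to show that certain products such as those built from $d_3 = (-2;2+T_2;-2-T_3)$ and $d_4 = (1;1-4T_2;1)$ are genuinely non-square; the argument already recorded inside \cref{J202} that $1-4T_2$ is non-square in $L^{(2)\times}_2/L^{(2)\times 2}_2$ via a $t$-adic square-root obstruction is exactly the template I would reuse. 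The other potentially fiddly point is handling $h_6 = (p;p;a+b\cdot T_3/p)$, which depends on the decomposition $p = a^2 + 2b^2$; tracing its split-third-slot image $(a+b\sqrt{-2}, a-b\sqrt{-2})$ at $v=p$ must be done carefully to confirm it is independent of $h_4, h_5, h_7$.

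Once independence of the twelve elements is established, the conclusion follows immediately, and combined with \cref{2 torsion02,Jp02,J202,H02} it will feed into the Selmer dimension formula to give $\dim \Sel = 7 + 7 - 12 = 2$ and hence $\rank(J(\mathbb{Q})) \leq 2 - 2 = 0$, completing \cref{rank=0} $(3)$.
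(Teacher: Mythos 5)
Your proposal follows essentially the same route as the paper's proof: part (1) is settled by direct calculation, and part (2) by using the relations in (1) to replace $d_5$, $d_7$, $h_3$ with the products $d_5' := d_5d_7h_1h_2h_5h_7 \,(= d_2)$, $d_7' := d_7h_4$, $h_3' := h_3h_4$ (exactly the change-of-basis cascade you cite from \cref{H+JS01,H+JS11}), and then eliminating generators slot by slot --- first in the four components at $v=p$ (removing $d_6, h_4, h_6$, then $h_2, h_5$, then $h_1, h_3', h_7$), then in the three components at $v=2$ (removing $d_3, d_7'$, then $d_5'$, then $d_4$ via precisely the $1-4T_2$ non-squareness argument from \cref{J202} that you flagged as the template). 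One arithmetic caveat in your sketch of (1): the third-slot product at $v=2$ is $(-T_3)(-1)(T_3) = T_3^2$, which is already manifestly a square, not $2T_3$ --- indeed $2T_3$ has odd valuation in $L_2^{(3)}$, so it is \emph{not} a square and no ``reconciliation'' via $T_3^2 = -2p^2$ could rescue it; the correct multiplication makes the check immediate.
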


\begin{proof} 
\begin{enumerate}
\item We can check it by direct calculation.
\item
By $(1)$ and \cref{Jp02,J202,H02}, $(\mathrm{Im}(\delta_{2}) \times\mathrm{Im}(\delta_{p}))+\res_S({\val}^{-1}(G))$ is generated by $d_3,  \ldots d_7, h_1, \ldots , h_7$.
Set
\begin{align*}
d_3 &=  (-2;2+T_2;-2-T_3)_2 \times\1_p,\\
d_4 &= (1;1-4T_2;1)_2 \times\1_p, \\
d_5' &:= d_5d_7h_1h_2h_5h_7 =(1;T_2;-1)_2 \times\1_p,\\
d_6 &= \1_2 \times(p \sqrt{-2}; p \sqrt{-2}-T_2; 1,-p \sqrt{-2})_p, \\
d_7' &:= d_7h_4 = (2;1;T_3)_2 \times\1_p,\\
h_1 &=(1;1;-1)_2 \times(1;1;-1,-1)_p, \\
h_2 &=(1;T_2/p;1)_2 \times(1;T_2/p;1,1)_p,\\
h_3' &:=h_3h_4= (-2;1;T_3)_2 \times(1;1;p\sqrt{-2},-p\sqrt{-2})_p, \\
h_4 &=(2;1;T_3)_2 \times(2;1;p\sqrt{-2},-p\sqrt{-2})_p, \\
h_5 &= (1;p;p)_2 \times(1;p;p,p)_p, \\
h_6 &= (p;p;a+b\cdot T_3/p)_2 \times(p;p;a+b\sqrt{-2},a-b\sqrt{-2})_p, \\
h_7 &=(1;1;p)_2 \times(1;1;p,p)_p.
\end{align*}
It is sufficient to prove that the above twelve elements are linearly independent.
\begin{enumerate}
\item By taking the first components at $v=p$ into account, we see that there is no relation containing $d_6$, $h_4$ and $h_6$.
\fn{Note that the relations among our generators do NOT depend on $p$ whenever $p \equiv 3 \pmod{8}$ 
although the representation of $h_6$ depends on $a$, $b \in \mathbb{Z}$ such that $p=a^2+2b^2$.
It is similar in the cases $p \equiv -3$, $-1 \pmod{8}$ (cf. \cref{H+JS02'}).
In contrast, when $p \equiv 1 \pmod{8}$, the relations DO depend on $a$, $b$, $c$, $d  \in \mathbb{Z}$ such that $p=a^2+2b^2=c^2+d^2$.
} 
\item By taking the second components at $v=p$ into account, we see that there is no relation containing $h_2$ and $h_5$.
\item By taking the third and fourth components at $v=p$ into account, we see that there is no relation containing $h_1$, $h_3'$ and $h_7$.
\item By taking the first components at $v=2$ into account, we see that there is no relation containing $d_3$ and $d_7'$.
\item By taking the third components at $v=2$ into account, we see that there is no relation containing $d_5'$.
\item Finally, by taking the second components at $v=2$ into account, we see that there is no relation containing $d_4$.
\end{enumerate}
\end{enumerate}
\end{proof}
Recall that 
\begin{align*}
\dim \Sel(\mathbb{Q},J^{(p; i, j)}) =& \dim {\val}^{-1}(G) + \dim(\mathrm{Im}(\delta_{2}) \times\mathrm{Im}(\delta_{p})) \\
&- \dim((\mathrm{Im}(\delta_{2}) \times\mathrm{Im}(\delta_{p}))+\res_S({\val}^{-1}(G)))
\end{align*}
and\ \[J(\mathbb{Q})/2J(\mathbb{Q}) \simeq \mathbb{F}_2^{\rank(J(\mathbb{Q}))} \oplus J(\mathbb{Q})[2].\]
Therefore, by \cref{2 torsion02,Jp02,J202,H02,H+JS02}, we obtain
\begin{align*}
\rank(J(\mathbb{Q}))  \leq \dim \Sel(\mathbb{Q},J)-\dim J(\mathbb{Q})[2]=7+7 -12-2=0. 
\end{align*}

This completes the proof of \cref{rank=0} $(3)$.


\subsection{Case $(i,j)=(0,2)$, $p \equiv -3 \pmod{8}$} 
Suppose that $p \equiv - 3 \pmod{8}$. 
Then, we have the following irreducible decompositions:
\[\begin{cases}
L_2=\mathbb{Q}_2[T_1]/(T_1) \times \mathbb{Q}_2[T_2]/(T_2^2+p^2) \times \mathbb{Q}_2[T_3]/(T_3^2+2p^2), \\
L_p=\mathbb{Q}_p[T_1]/(T_1) \times \mathbb{Q}_p \times \mathbb{Q}_p \times \mathbb{Q}_p[T_{3}]/(T_3^2+2p^2).
\end{cases}\] 
Here, we fix an element $\beta \in \mathbb{Q}_p$ such that ${\beta}^2=-1$, which is denoted by $\sqrt{-1}$.
Moreover, we also fix an isomorphism $L_p^{(2)} \simeq \mathbb{Q}_p \times \mathbb{Q}_p$  which sends $T_2$ to $(p\sqrt{-1},-p\sqrt{-1})$.  
According to this irreducible decomposition, we denote each element in $L_p$ by the form $(\beta_1;\beta_{2,1},\beta_{2,2};\beta_{3,2})$.

\begin{lemma} \label{2 torsion02'}
The following two elements form an $\mathbb{F}_{2}$-basis of $J(\mathbb{Q})[2]$ and $J(\mathbb{Q}_{v})[2]$ for $v = 2, \infty$: 
\begin{align*}
(0,0) &- \infty,& \sum_{\substack{P \in C(\ol{\mathbb{Q}}) \\ x_{P}^{2}+p^2 = 0}}P &- 2\infty.
\end{align*}
On the other hand, the following three elements form an $\mathbb{F}_{2}$-basis of $J(\mathbb{Q}_{p})[2]$:
\begin{align*}
(0,0) &- \infty, & (p\sqrt{-1}, 0)&- \infty, &  (-p\sqrt{-1}, 0)&-\infty.
\end{align*}

In particular, we have the following table.
\begin{table}[ht]
    \begin{tabular}{|l||l|l|} \hline
  $v$   & $\dim J(\mathbb{Q}_{v})[2]$ & $\dim \mathrm{Im}(\delta_{v}) $ 
  \\ \hline \hline
     $2$   & $2$ & $4$      \\ \hline
    $p$   & $3$   & $3$       \\ \hline
      $\infty$   & $2$ & $0$   \\ \hline  
    \end{tabular}
\end{table}
\end{lemma}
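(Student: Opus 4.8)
The plan is to mirror the proof of \cref{2 torsion02} almost verbatim, exchanging the roles of $-1$ and $-2$. The first and second statements, concerning $J(\mathbb{Q})[2]$ and $J(\mathbb{Q}_v)[2]$ for $v = 2, \infty$, follow directly from \cite[Lemma 5.2]{Stoll2014}: one only needs to record that neither $-1$ nor $-2$ is a square in $\mathbb{Q}_v$ for $v = 2, \infty$ (at $v=2$ because a square unit is $\equiv 1 \pmod 8$ and $-2$ has odd valuation, and at $v=\infty$ because both are negative). For these two places the irreducible factorization of $f(x)$ is identical to the $p \equiv 3 \pmod{8}$ case, so both quadratics $x^2+p^2$ and $x^2+2p^2$ stay irreducible and the two displayed differences again span a two-dimensional space.

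The one genuine difference arises at $v = p$. Here I would observe that $p \equiv -3 \pmod{8}$ forces $p \equiv 1 \pmod{4}$, so that $-1$ is now a square in $\mathbb{Q}_p$, while $p \equiv 5 \pmod{8}$ keeps $-2$ a non-square. This is precisely the swap that makes $x^2 + p^2$ split over $\mathbb{Q}_p$ and $x^2 + 2p^2$ remain irreducible---the reverse of the previous subsection, and the reason the stated local factorization of $L_p$ has its second factor split rather than its third. Consequently the three Weierstrass points $(0,0)$, $(p\sqrt{-1}, 0)$, and $(-p\sqrt{-1}, 0)$ are all defined over $\mathbb{Q}_p$, and \cite[Lemma 5.2]{Stoll2014} identifies the corresponding differences $(0,0)-\infty$, $(p\sqrt{-1},0)-\infty$, $(-p\sqrt{-1},0)-\infty$ as an $\mathbb{F}_2$-basis of $J(\mathbb{Q}_p)[2]$, giving $\dim J(\mathbb{Q}_p)[2] = 3$.

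The table entries for $\dim \mathrm{Im}(\delta_v)$ then follow from the same dimension formula already used in \cref{2 torsion01,2 torsion11,2 torsion02} (cf. \cite[p. 451, proof of Lemma 3]{FPS}), which relates $\dim_{\mathbb{F}_2} \mathrm{Im}(\delta_v)$ to $\dim_{\mathbb{F}_2} J(\mathbb{Q}_v)[2]$ by adding $0$, $+2$, or $-2$ according to whether $v \neq 2, \infty$, $v = 2$, or $v = \infty$; substituting $\dim J(\mathbb{Q}_v)[2] = 2, 3, 2$ for $v = 2, p, \infty$ yields the values $4, 3, 0$ in the table. I do not anticipate any real obstacle: once the split/non-split dichotomy at $p$ is correctly pinned down, the argument is a faithful copy of \cref{2 torsion02}, and the only point requiring care is the elementary congruence bookkeeping that sends $-1$ to a square and $-2$ to a non-square modulo $p$ when $p \equiv 5 \pmod 8$.
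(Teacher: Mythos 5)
Your proposal is correct and follows the paper's own proof essentially verbatim: both reduce the basis statements to \cite[Lemma 5.2]{Stoll2014} via the observations that neither $-1$ nor $-2$ is a square in $\mathbb{Q}_v$ for $v=2,\infty$, while $-1$ is a square and $-2$ is not in $\mathbb{Q}_p$ when $p \equiv -3 \pmod 8$, and both then obtain the table from the dimension formula of \cite[p.~451, proof of Lemma 3]{FPS}. The only difference is that you spell out the quadratic-residue bookkeeping behind the square/non-square claims, which the paper leaves implicit.
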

\begin{proof}
The first and second statements follow from \cite[Lemma 5.2]{Stoll2014}.
Note that
\begin{itemize}
\item neither $-1$ nor $-2$ is square in $\mathbb{Q}_{v}$ for $v = 2,  \infty$.
\item $-1$ is square in $\mathbb{Q}_{p}$ and $-2$ is not square in $\mathbb{Q}_{p}$.
\end{itemize}
The third statement follows from the following formula (cf. \cite[p. 451, proof of Lemma 3]{FPS}).
\[\dim_{\mathbb{F}_2} \mathrm{Im}(\delta_{v}) =\dim_{\mathbb{F}_2} J({\mathbb{Q}_{v}})[2]
\begin{cases}
+0 & (v \neq 2, \infty), \\
+2 & (v = 2), \\
-2 & (v = \infty). \\
\end{cases}\]
\end{proof}

\begin{lemma} 
\label{Jp02'} 
The following three elements of $L_p^{\times}/L_p^{\times 2}$ form an $\mathbb{F}_{2}$-basis of $\mathrm{Im}(\delta_{p})$.
\begin{align*}
(2;p\sqrt{-1},p\sqrt{-1};T_3),&& (p\sqrt{-1};1,2p\sqrt{-1};p\sqrt{-1}-T_3),&& (1;p\sqrt{-1},p\sqrt{-1};1). 
\end{align*}
In particular, the following three elements of $I_p(L)/I_p(L)^2$ form an $\mathbb{F}_2$  basis of $G_p$:
\begin{align*} 
((1); (p),(p);(p))_p,&& ((p);(1),(p); (p))_p,&& ((1); (p),(p); (1))_p.
\end{align*}
\end{lemma}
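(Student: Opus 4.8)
The plan is to follow the template of \cref{Jp02} verbatim, swapping the roles of the two quadratic factors to match the congruence $p \equiv -3 \pmod 8$. Since now $-1$ is a square in $\mathbb{Q}_p$ while $-2$ is not, it is the factor $T_2^2+p^2$ that splits over $\mathbb{Q}_p$ (whereas $T_3^2+2p^2$ stays irreducible), the mirror image of the situation in \cref{Jp02}. Accordingly, I would exhibit the three claimed elements as $\delta_p$-images of the three $\mathbb{Q}_p$-rational $2$-torsion divisors supplied by \cref{2 torsion02'}: the first from $(0,0)-\infty$, the second from $(p\sqrt{-1},0)-\infty$, and the third from the Galois-stable sum $\sum_{x_P^2+p^2=0}P-2\infty$. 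These three divisors differ from the basis $(0,0)-\infty,\,(\pm p\sqrt{-1},0)-\infty$ by an invertible $\mathbb{F}_2$-change of variables, so they again span $J(\mathbb{Q}_p)[2]$.

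First I would apply \cref{image of 2-torsion} to each divisor. For $(0,0)-\infty$ the relevant factor is the linear $h(x)=x$, giving $\delta_p=-T+(T^2+p^2)(T^2+2p^2)$; for $(p\sqrt{-1},0)-\infty$ the factor is the linear $h(x)=x-p\sqrt{-1}$ dividing the split $T_2^2+p^2$, giving $\delta_p=-(T-p\sqrt{-1})+T(T+p\sqrt{-1})(T^2+2p^2)$; and for the sum the factor is $h(x)=x^2+p^2$, giving $\delta_p=(T^2+p^2)-T(T^2+2p^2)$. Evaluating each expression componentwise in $L_p\simeq \mathbb{Q}_p\times(\mathbb{Q}_p\times\mathbb{Q}_p)\times\mathbb{Q}_p[T_3]/(T_3^2+2p^2)$, that is, substituting $T=0$, $T=\pm p\sqrt{-1}$, and $T=T_3$ in the respective factors, and then reducing modulo squares (using that $-1$ is a square in every factor, while $2$ and $-2$ are non-squares in $\mathbb{Q}_p$), should return, up to the choice of square-class representatives, the three elements in the statement. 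This shows all three lie in $\mathrm{Im}(\delta_p)$.

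Next, to get both linear independence and the description of $G_p$, I would push the three elements through $\val_p$. This depends only on the valuations of the entries: $2$ is a $p$-adic unit, $v_p(p\sqrt{-1})=1$, and $T_3$ is a uniformizer of the unramified quadratic extension $L_p^{(3)}\simeq\mathbb{Q}_p(\sqrt{-2})$, so $v(p\sqrt{-1}-T_3)=1$ as well. This yields the three ideal classes $((1);(p),(p);(p))_p$, $((p);(1),(p);(p))_p$, $((1);(p),(p);(1))_p$ claimed for $G_p$, and reading off the valuations in the $T_1$-factor, in the first $\mathbb{Q}_p$-factor of $L_p^{(2)}$, and in the $T_3$-factor shows these three are $\mathbb{F}_2$-independent. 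Since $\val_p$ is a homomorphism, independence of the images forces independence of the three original classes in $L_p^\times/L_p^{\times 2}$. Finally, \cref{2 torsion02'} gives $\dim_{\mathbb{F}_2}\mathrm{Im}(\delta_p)=3$, so three independent elements of the image form a basis, and their $\val_p$-images form a basis of $G_p$.

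The routine-but-delicate part is the evaluation in the split factor $L_p^{(2)}\simeq\mathbb{Q}_p\times\mathbb{Q}_p$ for the second divisor: at the coordinate where the chosen linear $h$ vanishes one cannot use $h$ itself but must instead read off $(-1)^{\deg h+1}(f/h)(p\sqrt{-1})$, and it is precisely in this value, together with the sign bookkeeping coming from $-1$ being a square, that the congruence $p\equiv-3\pmod 8$ enters. Everything else is parallel to \cref{Jp02}, so I expect no conceptual obstacle beyond keeping the $\sqrt{-1}$-coordinates and the square classes of $2$ and $-2$ straight.
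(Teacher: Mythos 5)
Your proposal is correct and takes essentially the same route as the paper: apply \cref{image of 2-torsion} to the three $2$-torsion divisors $(0,0)-\infty$, $(p\sqrt{-1},0)-\infty$ and $\sum_{x_P^2+p^2=0}P-2\infty$ supplied by \cref{2 torsion02'}, evaluate componentwise in $L_p\simeq\mathbb{Q}_p\times(\mathbb{Q}_p\times\mathbb{Q}_p)\times\mathbb{Q}_p[T_3]/(T_3^2+2p^2)$, and obtain both the linear independence and the description of $G_p$ from the $\val_p$-images together with $\dim_{\mathbb{F}_2}\mathrm{Im}(\delta_p)=3$. One caveat on the step you flag as delicate: carrying out that evaluation gives $(f/h)(p\sqrt{-1})=-2p^4\equiv 2$ modulo squares, and $2$ is a non-square in $\mathbb{Q}_p$ for $p\equiv 5\pmod 8$, so the second basis element is really $(p\sqrt{-1};2,2p\sqrt{-1};p\sqrt{-1}-T_3)$ --- exactly what the paper's own proof displays --- and the entry ``$1$'' in the lemma's statement is a typo; this is harmless for the conclusion, since $2$ is a $p$-adic unit, so the claimed $\val_p$-image $((p);(1),(p);(p))_p$ and the independence argument are unchanged.
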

\begin{proof}
\cref{image of 2-torsion} implies that
\begin{align*}
\delta_p((0,0)-\infty) &= -T+(T^2+p^2)(T^2+2p^2) =(2;p\sqrt{-1},p\sqrt{-1};T_3), \\
\delta_p((p\sqrt{-1},0)-\infty) &= (T-p\sqrt{-1})+T(T+p\sqrt{-1})(T^2+2p^2) \\
&=  (p\sqrt{-1};2,2p\sqrt{-1};p\sqrt{-1}-T_3),\\
\delta_p \left(\sum_{\substack{P \in C(\ol{\mathbb{Q}}) \\x_{P}^{2}+p^2 = 0}}P - 2\infty \right) &= (T^2+p^2)-T(T^2+2p^2)=(1;p\sqrt{-1},p\sqrt{-1};1).
\end{align*}

Hence, the above three elements lie in $\mathrm{Im}(\delta_{p})$. 
By taking their images in $G_p$ into account, we see that they are linearly independent.
By \cref{2 torsion02'}, this completes the proof.
\end{proof}

\begin{lemma} 
\label{J202'} 
The following four elements of $L_2^{\times}/L_2^{\times 2}$ form an $\mathbb{F}_{2}$-basis of $\mathrm{Im}(\delta_{2})$:
\begin{align*}
(2;T_2;-T_3), && (1;T_2;-1), && (-2;2+T_2;-2-T_3), && (1;1-4T_2;1). 
\end{align*}
In particular, the following element of $I_2(L)/I_2(L)^2$ forms an $\mathbb{F}_2$-basis of $G_2$:
\[((2);(1);(T_3))_2.\]
 \end{lemma}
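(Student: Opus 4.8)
The plan is to reduce everything to the already-established \cref{J202}. Indeed, the decomposition of $L_2$ displayed above coincides verbatim with the one in the case $p \equiv 3 \pmod 8$, and every $\mathbb{Q}_2$-local quantity entering the computation of $\mathrm{Im}(\delta_2)$ and $G_2$ depends on $p$ only through $p^2 \pmod{16}$. Since $p \equiv -3 \pmod 8$ forces $p^2 \equiv 9 \pmod{16}$, exactly as $p \equiv 3 \pmod 8$ does, the two computations are literally identical. In particular $L_2^{(2)} \simeq \mathbb{Q}_2(\sqrt{-1})$ and $L_2^{(3)} \simeq \mathbb{Q}_2(\sqrt{-2})$ in both cases, so it suffices to re-check that each $2$-adic input is insensitive to the sign of $p \bmod 8$.

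Concretely, I would first produce the four elements as images under $\delta_2$. By \cref{image of 2-torsion}, $\delta_2((0,0)-\infty) = -T + (T^2+p^2)(T^2+2p^2) = (2;T_2;-T_3)$ and $\delta_2\bigl(\sum_{x_P^2+p^2=0}P - 2\infty\bigr) = (T^2+p^2) - T(T^2+2p^2) = (1;T_2;-1)$, where one uses that $-1$ is a square in $L_2^{(2)} \simeq \mathbb{Q}_2(\sqrt{-1})$ to replace $-T_2$ by $T_2$. For the remaining two elements I would verify the congruences $f(-2) \equiv 4 \pmod{32}$ and $2^{10}f(1/4) \equiv 1 \pmod 8$; both follow from $p^2 \equiv 9 \pmod{16}$ alone, the first because $(4+p^2)(4+2p^2) \equiv 14 \pmod{16}$, the second because $2^{10}f(1/4) = (1+16p^2)(1+32p^2) \equiv 1 \pmod 8$ regardless of $p$. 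Hensel's lemma then yields $Q, R \in C(\mathbb{Q}_2)$ with $x_Q = -2$ and $x_R = 1/4$, giving $\delta_2(Q-\infty) = (-2;2+T_2;-2-T_3)$ and $\delta_2(R-\infty) = (1;1-4T_2;1)$.

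For linear independence I would argue exactly as in \cref{J202}: the first and third elements are non-trivial since $v_2(2)=v_2(-2)=1$, the second since $-1$ is non-trivial in $L_2^{(3)\times}/L_2^{(3)\times 2}$, and inspection of the first and third components shows the four are independent over $\mathbb{F}_2$; by \cref{2 torsion02'} they then form a basis of $\mathrm{Im}(\delta_2)$. The asserted basis of $G_2$ results from applying $\val_2$: the second and fourth generators are units at $2$, while both $(2;T_2;-T_3)$ and $(-2;2+T_2;-2-T_3)$ map to $((2);(1);(T_3))_2$, since $T_3$ is a uniformizer of $L_2^{(3)}$ and the relevant norms $4+p^2$, $4+2p^2$ have $2$-adic valuations $0$ and $1$. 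The single genuinely delicate point is the non-triviality of $1-4T_2$ in $L_2^{(2)\times}/L_2^{(2)\times 2}$; since $L_2^{(2)} \simeq \mathbb{Q}_2(\sqrt{-1})$ is the same ramified field as in \cref{J202}, with a uniformizer $t$ satisfying $2 \equiv t^2 + b_1 t^3 \pmod{t^4}$, the expansion $1-4T_2 \equiv (1+a_1 t + a_2 t^2 + \cdots)^2 \pmod{t^5}$ forces the contradiction $a_2 + a_2^2 \equiv 1 \pmod 2$, exactly as before. Thus I expect no obstacle beyond confirming that these $2$-adic data coincide with the $p \equiv 3 \pmod 8$ case.
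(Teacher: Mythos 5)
Your proposal is correct and takes essentially the same approach as the paper: the paper's entire proof of this lemma is the single line ``See the proof of \cref{J202}'', which relies on exactly the observation you spell out, namely that every $2$-adic quantity involved depends on $p$ only through $p^2 \equiv 9 \pmod{16}$, so the computation for $p \equiv -3 \pmod{8}$ is verbatim that for $p \equiv 3 \pmod{8}$. Your explicit verifications (the identifications $L_2^{(2)} \simeq \mathbb{Q}_2(\sqrt{-1})$, $L_2^{(3)} \simeq \mathbb{Q}_2(\sqrt{-2})$, the congruences $f(-2) \equiv 4 \pmod{32}$ and $2^{10}f(1/4) \equiv 1 \pmod{8}$, and the non-squareness of $1-4T_2$) are all correct and simply make explicit what the paper's cross-reference leaves implicit.
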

\begin{proof}
See the proof of \cref{J202}. 
\end{proof}

\begin{corollary} \label{V02'}
The following three elements of $L^{\times}/ L^{\times 2}$ form an $\mathbb{F}_{2}$-basis of $\Ker(\val)$: 
 \begin{align*}
 (1;1;-1),&& (1;T_2/p;1),&& (-1;1;1).
 \end{align*}
\end{corollary}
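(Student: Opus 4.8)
The plan is to reproduce almost verbatim the argument of \cref{V02}, after first observing that the étale algebra $L = L^{(1)} \times L^{(2)} \times L^{(3)}$ depends only on the pair $(i,j)=(0,2)$ and not on the congruence class of $p$ modulo $8$. Concretely, $L^{(1)} \simeq \mathbb{Q}$, while $L^{(2)} = \mathbb{Q}[T_2]/(T_2^2+p^2) \simeq \mathbb{Q}(\sqrt{-1})$ via $T_2/p \mapsto \sqrt{-1}$ and $L^{(3)} = \mathbb{Q}[T_3]/(T_3^2+2p^2) \simeq \mathbb{Q}(\sqrt{-2})$ via $T_3/p \mapsto \sqrt{-2}$. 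These are exactly the same three number fields that appeared in the case $p \equiv 3 \pmod 8$, so the computation of $\Ker(\val)$ is insensitive to the switch from $p \equiv 3$ to $p \equiv -3$; the only thing that actually changes between the two cases is the local splitting behaviour at $p$ (which swaps the roles of $L^{(2)}_p$ and $L^{(3)}_p$), and this plays no role whatsoever in the present statement.

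First I would apply the snake lemma to the same commutative diagram used in the proof of \cref{V01} in order to obtain the short exact sequence
\[
1 \to \mathcal{O}_L^{\times}/\mathcal{O}_L^{\times 2} \to \Ker(\val) \to \Cl(L)[2] \to 0.
\]
Next, since each of the three factor fields $\mathbb{Q}$, $\mathbb{Q}(\sqrt{-1})$, $\mathbb{Q}(\sqrt{-2})$ has class number one, we get $\Cl(L)[2]=0$, so the sequence collapses to an isomorphism $\Ker(\val) \simeq \mathcal{O}_L^{\times}/\mathcal{O}_L^{\times 2}$. It then remains only to exhibit an $\mathbb{F}_2$-basis of the right-hand side.

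Finally I would determine the unit groups modulo squares factor by factor. On $L^{(1)} \simeq \mathbb{Q}$ the quotient $\mathcal{O}_{L^{(1)}}^{\times}/\mathcal{O}_{L^{(1)}}^{\times 2}$ is generated by $-1$; on $L^{(2)} \simeq \mathbb{Q}(\sqrt{-1})$ the full unit group is $\langle i \rangle$ and, because $-1=i^2$ is already a square, the quotient is generated by the class of $i=T_2/p$; and on $L^{(3)} \simeq \mathbb{Q}(\sqrt{-2})$ the only units are $\pm 1$, so the quotient is generated by $-1$. Assembling these three generators yields exactly the elements $(-1;1;1)$, $(1;T_2/p;1)$ and $(1;1;-1)$, which therefore form the desired $\mathbb{F}_2$-basis. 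I do not anticipate any genuine obstacle: the sole verification needed is that the global fields $L^{(i)}$ coincide with those of \cref{V02}, after which the conclusion is immediate and the argument is identical to that case.
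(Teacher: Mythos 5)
Your proposal is correct and takes essentially the same route as the paper: the paper's proof of \cref{V02'} is literally a pointer to \cref{V02}, whose argument is exactly your snake-lemma sequence $1 \to \mathcal{O}_L^{\times}/\mathcal{O}_L^{\times 2} \to \Ker(\val) \to \Cl(L)[2] \to 0$ combined with the facts that $\mathbb{Q}$, $\mathbb{Q}(\sqrt{-1})$, $\mathbb{Q}(\sqrt{-2})$ all have class number one and that the three displayed elements generate $\mathcal{O}_L^{\times}/\mathcal{O}_L^{\times 2}$. Your preliminary observation---that the global algebra $L$ is identical to that of the case $p \equiv 3 \pmod 8$, the congruence on $p$ affecting only the splitting at $p$---is precisely the justification the paper leaves implicit when it reuses the proof of \cref{V02}.
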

\begin{proof}
See the proof of \cref{V02}.
\end{proof}

\begin{lemma} \label{W02'} 
The following four elements of $I(L)/I(L)^2$ form an $\mathbb{F}_{2}$-basis of $W=G$: 
\begin{align*}
 ((2);(1);(T_3))_2 \times \1_p,&& \1_2 \times ((1); (p),(p);(p))_p, \\
 && \1_2 \times ((p);(1),(p); (p))_p,&& \1_2 \times ((1); (p),(p); (1))_p.
 \end{align*}
\end{lemma}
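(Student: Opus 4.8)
The plan is to follow the strategy already used for the companion case in \cref{W02}, since the structure of the problem is identical: by definition $W=\Ker(G\to \Cl(L)/\Cl(L)^2)$, so once we verify that $\Cl(L)/\Cl(L)^2$ is trivial we may conclude $W=G$, and then the four asserted elements form a basis of $W$ precisely because they form a basis of $G=G_2\times G_p$.

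First I would pin down the three factors of $L$. Here $L^{(1)}=\mathbb{Q}$, while the identifications $T_2\mapsto p\sqrt{-1}$ and $T_3\mapsto p\sqrt{-2}$ exhibit isomorphisms $L^{(2)}\simeq \mathbb{Q}(\sqrt{-1})$ and $L^{(3)}\simeq \mathbb{Q}(\sqrt{-2})$. Both of these imaginary quadratic fields have class number $1$; if one prefers to avoid quoting this directly, \cref{Genus theory} (2) already gives $\Cl(L^{(2)})[2]=\Cl(L^{(3)})[2]=0$, since the discriminants $-4$ and $-8$ are $\equiv 0 \pmod{4}$ with no odd prime divisors (so $r=0$). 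Together with $\Cl(L^{(1)})[2]=0$ this yields $\Cl(L)[2]=0$, and as $\#(\Cl(L)/\Cl(L)^2)=\#\Cl(L)[2]$ for the finite group $\Cl(L)$, we obtain $\Cl(L)/\Cl(L)^2=0$ and hence $W=G$.

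It then remains to assemble a basis of $G=G_2\times G_p$ from the local computations. By \cref{J202'}, $G_2$ is spanned by $((2);(1);(T_3))_2$, and by \cref{Jp02'}, $G_p$ is spanned by $((1);(p),(p);(p))_p$, $((p);(1),(p);(p))_p$, and $((1);(p),(p);(1))_p$. Under the identification of $I_2(L)\times I_p(L)$ with its image in $I(L)$, the generator of $G_2$ is padded by $\1_p$ and those of $G_p$ by $\1_2$, which produces exactly the four elements in the statement; their linear independence in $I(L)/I(L)^2$ is inherited from the independence of the two local bases in their respective factors. The only step carrying any content is the class-number input of the second paragraph, and since it is classical I anticipate no real obstacle; everything else is bookkeeping.
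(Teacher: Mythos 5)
Your proposal is correct and follows essentially the same route as the paper: the paper's proof of \cref{W02'} simply refers to \cref{W02}, which observes that the class number of each factor $L^{(i)}$ (here $\mathbb{Q}$, $\mathbb{Q}(\sqrt{-1})$, $\mathbb{Q}(\sqrt{-2})$) is $1$, so that $W=G$, and then assembles the basis of $G=G_2\times G_p$ from \cref{Jp02',J202'} exactly as you do. Your side remark that one could instead invoke \cref{Genus theory} $(2)$ to get $\Cl(L)[2]=0$ is a harmless variant of the same class-group input, not a different argument.
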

\begin{proof}
See the proof of \cref{W02}.
\end{proof}

Recall that there exists an exact sequence $1 \to \Ker(\val) \to {\val}^{-1}(G) \to W \to 1$. 
By \cref{V02',W02'}, we have the following lemma.
\begin{lemma} \label{H02'}
Fix $c,d \in \mathbb{Z}$ such that $p=c^2+d^2$ in $L^{(3)}$.
Then, the following seven elements of $L^{\times}/L^{\times 2}$ form an $\mathbb{F}_{2}$-basis of ${\val}^{-1}(G)$: 
\begin{align*}
(1;1;-1),&& (1;T_2/p;1),&& (-1;1;1),&& (2;1;T_3), && (1;p;p),&& (p;c+d \cdot T_2/p;p), && (1;p;1).
\end{align*}
\end{lemma}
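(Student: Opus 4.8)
The plan is to run the proof of \cref{H02} essentially verbatim, with the roles of $L^{(2)}$ and $L^{(3)}$ interchanged. As recalled just above the statement, there is a short exact sequence
\[1 \to \Ker(\val) \to {\val}^{-1}(G) \overset{\val}{\to} W \to 1,\]
and by \cref{V02',W02'} we have $\dim_{\mathbb{F}_2}\Ker(\val)=3$ and $\dim_{\mathbb{F}_2}W=4$. It therefore suffices to exhibit three elements spanning $\Ker(\val)$ and four further elements whose $\val$-images form a basis of $W$; the seven then constitute a basis of ${\val}^{-1}(G)$.

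The first three elements $(1;1;-1)$, $(1;T_2/p;1)$, $(-1;1;1)$ are precisely the basis of $\Ker(\val)$ supplied by \cref{V02'}, so no work is needed there. For the remaining four I would compute $\val$ explicitly, using that $p \equiv -3 \pmod 8$ forces $p$ to split in $L^{(2)} \simeq \mathbb{Q}(\sqrt{-1})$ (as $-1$ is a square modulo $p$) and to be inert in $L^{(3)} \simeq \mathbb{Q}(\sqrt{-2})$ (as $-2$ is a non-square modulo $p$), while $2$ ramifies in both. Concretely, $(c+d\cdot T_2/p)$ generates one of the two primes above $p$ in $L^{(2)}$ because $p=c^2+d^2$, and $T_3 = p\sqrt{-2}$ contributes the ramified prime above $2$ together with the inert prime above $p$ in $L^{(3)}$. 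Evaluating $\val$ on $(2;1;T_3)$, $(1;p;p)$, $(p;c+d\cdot T_2/p;p)$, $(1;p;1)$ then yields four elements of $I(L)/I(L)^2$ supported at $\{2,p\}$, which I would match against the basis of $W=G$ recorded in \cref{W02'}.

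Finally I would confirm these four images lie in $W$ and are linearly independent: the $L^{(1)}$-components single out the two elements carrying the ideals $(2)$ and $(p)$, and the $L^{(3)}$-component at $p$ distinguishes the two remaining elements, so no nontrivial relation survives; as $\dim_{\mathbb{F}_2}W=4$, they form a basis. The one genuinely delicate point — and the only place the present case departs from \cref{H02} — is the bookkeeping of this swap: for $p \equiv 3 \pmod 8$ it is $L^{(3)}$ that splits at $p$ (with $p=a^2+2b^2$) and $L^{(2)}$ that is inert, whereas here the splitting is interchanged (with $p=c^2+d^2$). Once one keeps track of which conjugate prime above $p$ is selected by the fixed isomorphism $L_p^{(2)} \simeq \mathbb{Q}_p \times \mathbb{Q}_p$, $T_2 \mapsto (p\sqrt{-1},-p\sqrt{-1})$, the four $\val$-images align with \cref{W02'} exactly as before, and the argument closes.
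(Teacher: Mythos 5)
Your proposal is correct and is essentially the paper's own argument: the paper deduces \cref{H02'} directly from the exact sequence $1 \to \Ker(\val) \to {\val}^{-1}(G) \to W \to 1$ together with \cref{V02',W02'}, exactly as you do, taking the three elements of \cref{V02'} as a basis of $\Ker(\val)$ and the remaining four as lifts of a basis of $W$. Your explicit bookkeeping of the $\val$-images (splitting of $p$ in $L^{(2)}$, inertness in $L^{(3)}$, ramification of $2$, and the choice of conjugate prime fixed by $T_2 \mapsto (p\sqrt{-1},-p\sqrt{-1})$) simply fills in the details the paper leaves implicit.
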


\begin{lemma} \label{H+JS02'}
Set elements of $L_2^{\times}/L_2^{\times 2} \times L_p^{\times}/L_p^{\times 2}$ as follows: 
\begin{align*} 
d_1 &:= (2;T_2;-T_3)_2 \times\1_p, \\
d_2 &:= (1;T_2;-1)_2 \times\1_p, \\
d_3 &:= (-2;2+T_2;-2-T_3)_2 \times\1_p, \\
d_4 &:= (1;1-4T_2;1)_2 \times\1_p, \\
d_5 &:= \1_2 \times(2;p\sqrt{-1},p\sqrt{-1};T_3)_p, \\
d_6 &:= \1_2 \times(p\sqrt{-1};1,2p\sqrt{-1};p\sqrt{-1}-T_3)_p, \\
d_7 &:= \1_2 \times(1;p\sqrt{-1},p\sqrt{-1};1)_p, \\
h_1 &:= \res_S(1;1;-1)=(1;1;-1)_2 \times\1_p, \\
h_2 &:= \res_S(1;T_2/p;1)=(1;T_2/p;1)_2 \times(1;\sqrt{-1}, \sqrt{-1};1)_p, \\ 
h_3 &:= \res_S(-1;1;1)=(-1;1;1)_2 \times\1_p, \\
h_4 &:= \res_S(2;1;T_3)=(2;1;T_3)_2 \times(2;1,1;T_3)_p, \\
h_5 &:= \res_S(1;p;p)=(1;p;p)_2 \times(1;p,p;p)_p, \\
h_6 &:= \res_S(p;c+d\sqrt{-1};p)=(p;c+d \cdot T_2/p;T_3)_2 \times(p;c+d\sqrt{-1},c-d\sqrt{-1};T_3)_p, \\
h_7 &:= \res_S(1;p;1)=(1;p;1)_2 \times(1;p,p;1)_p.
\end{align*}
\begin{enumerate}
\item We have $d_1 d_2 d_7 h_4 =d_2 d_5 d_7 h_1 h_2 h_5 h_7 = \1_2 \times\1_p.$
\item The twelve elements $d_3, \ldots, d_7$, $h_1, \ldots, h_7$ form an $\mathbb{F}_2$-basis of $(\mathrm{Im}(\delta_{2}) \times\mathrm{Im}(\delta_{p}))+\res_S({\val}^{-1}(G))$. 
\end{enumerate}
\end{lemma}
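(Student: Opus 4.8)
The plan is to carry out this proof in exact parallel with that of \cref{H+JS02}, taking advantage of the near-symmetry between the two cases. When $p \equiv -3 \pmod 8$ we have $-1$ square and $-2$ non-square in $\mathbb{Q}_p$ (the reverse of the previous case), so at the prime $p$ the roles of the factors $L^{(2)}$ and $L^{(3)}$ are interchanged: now $L_p^{(2)}$ splits while $L_p^{(3)}$ remains a field. At $v = 2$, however, the decomposition of $L_2$ is literally identical to that of the $p \equiv 3 \pmod 8$ case (both $L_2^{(2)}$ and $L_2^{(3)}$ depend only on $p^2$ and $2p^2$ modulo squares in $\mathbb{Q}_2$), and indeed \cref{J202'} simply refers back to \cref{J202}. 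This means all $2$-adic input is already available and only the $p$-adic bookkeeping needs to be redone with the factors swapped.

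For part $(1)$ I would verify the two relations by direct componentwise multiplication of the tuples in $L_2^\times/L_2^{\times 2} \times L_p^\times/L_p^{\times 2}$, checking that every component becomes a square. For part $(2)$, I would first invoke part $(1)$ together with \cref{Jp02',J202',H02'} to conclude that $(\mathrm{Im}(\delta_{2}) \times\mathrm{Im}(\delta_{p}))+\res_S({\val}^{-1}(G))$ is generated by the twelve elements $d_3, \ldots, d_7, h_1, \ldots, h_7$; since the target space has dimension at most twelve, it then suffices to prove that these twelve elements are linearly independent. Following the template, I would pass to a more convenient set of twelve generators, replacing certain $d_i$ and $h_j$ by suitable products (for instance $d_5' := d_5 d_7 h_1 h_2 h_5 h_7$, $d_7' := d_7 h_4$, and $h_3' := h_3 h_4$, adjusted for the swapped factors), chosen so that each generator is isolated in some coordinate.

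The linear independence is then established by successive componentwise elimination: first the components at $v = p$ (the first, the split second factor, and the inert third factor) rule out relations involving $d_6$, $h_4$, $h_6$, $h_2$, $h_5$, $h_1$, $h_3'$, $h_7$; then the components at $v = 2$ (first, second, third) rule out relations involving the remaining $d_3$, $d_4$, $d_5'$, $d_7'$. The main obstacle is precisely this $v = 2$ analysis, which requires verifying non-squareness of certain residues modulo powers of the uniformizers of $L_2^{(2)}$ and $L_2^{(3)}$, exactly the computations appearing in the proofs of \cref{J202,H+JS02}. Because $L_2$ is unchanged between the two cases, I expect these $2$-adic verifications to carry over essentially verbatim, so the genuinely new work is confined to re-deriving the $p$-adic eliminations with $L_p^{(2)}$ now split and $L_p^{(3)}$ now inert.
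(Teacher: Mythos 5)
Your overall strategy is the same as the paper's: prove part (1) by direct componentwise computation, deduce from (1) and \cref{Jp02',J202',H02'} that the twelve listed elements generate, pass to modified generators, and eliminate componentwise, first at $v=p$ and then at $v=2$. However, the specific elimination schedule you state contains a step that fails, and it fails exactly at the point where the ``swapped factors'' matter most. Since $p \equiv -3 \pmod 8$ implies $p \equiv 1 \pmod 4$, the element $-1$ is a square in $\mathbb{Q}_p$, hence in every factor of $L_p$; consequently $h_1 = (1;1;-1)_2 \times \1_p$ and $h_3 = (-1;1;1)_2 \times \1_p$ are \emph{trivial} at $p$ (the statement of the lemma already records this). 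So no relation involving $h_1$ can be ruled out by looking at components at $v=p$, contrary to your claim that the $p$-adic components eliminate ``$d_6$, $h_4$, $h_6$, $h_2$, $h_5$, $h_1$, $h_3'$, $h_7$''. This is not cosmetic: in \cref{H+JS02} (the case $p \equiv 3 \pmod 8$) the corresponding elements are non-trivial at $p$ and are killed there, whereas here they must be killed at $v=2$, so the $2$-adic part of the argument is \emph{not} a verbatim copy of the previous case even though $L_2$ itself is unchanged.

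Relatedly, the auxiliary products you borrow from \cref{H+JS02} do not give the coordinate isolation you want here. With your $h_3' := h_3h_4$, the $p$-components of $h_3'$ and $h_4$ coincide (both equal $(2;1,1;T_3)_p$), so $p$-adic information can only show that $h_3'$ and $h_4$ occur together in a relation; the residual combination $h_3'h_4 = h_3$ (mod squares) is trivial at $p$ and must again be excluded at $v=2$. Keeping plain $d_3$ also causes trouble: the first components at $2$ of $d_3$, $d_5'$, $h_3$ are $-2$, $2$, $-1$, whose product is a square, so the first-component argument alone cannot separate them. The paper's choices are designed around exactly these points: it keeps $h_3$ and $h_4$ unmodified, and sets $d_3' := d_3d_5d_7h_3h_4 = (1;2+T_2;(-2-T_3)T_3)_2 \times \1_p$, $d_5' := d_5d_7h_4 = (2;1;T_3)_2 \times \1_p$, $d_7' := d_7h_2h_7 = (1;T_2;1)_2 \times \1_p$, all trivial at $p$; then it eliminates $d_6$, $h_4$, $h_6$ (first components at $p$), $h_5$ (fourth components at $p$), $h_2$, $h_7$ (second and third components at $p$), and finally $d_5'$, $h_3$ (first components at $2$), $d_3'$, $h_1$ (third components at $2$), and $d_4$, $d_7'$ (second components at $2$). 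Your plan needs this reallocation, which you defer under ``adjusted for the swapped factors,'' to be spelled out; as stated, the independence argument does not close.
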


\begin{proof} 
\begin{enumerate}
\item We can check it by direct calculation.
\item
By $(1)$ and \cref{Jp02',J202',H02'}, $(\mathrm{Im}(\delta_{2}) \times\mathrm{Im}(\delta_{p}))+\res_S({\val}^{-1}(G))$ is generated by $d_3,  \ldots d_7, h_1, \ldots , h_7$.
Set
\begin{align*}
d_3' &:= d_3 d_5 d_7h_3h_4=(1;2+T_2;(-2-T_3)T_3)_2 \times\1_p,\\ 
d_4 &= (1;1-4T_2;1)_2 \times\1_p, \\
d_5' &:= d_5d_7h_4=(2;1;T_3)_2 \times\1_p, \\
d_6 &= \1_2 \times(p\sqrt{-1};1,2p\sqrt{-1};p\sqrt{-1}-T_3)_p, \\
d_7' &:= d_7h_2h_7=(1;T_2;1)_2 \times\1_p, \\
h_1 &=(1;1;-1)_2 \times\1_p, \\
h_2 &= (1;T_2/p;1)_2 \times(1;\sqrt{-1}, \sqrt{-1};1)_p, \\ 
h_3 &= (-1;1;1)_2 \times\1_p, \\
h_4 &=(2;1;T_3)_2 \times(2;1,1;T_3)_p, \\
h_5 &= (1;p;p)_2 \times(1;p,p;p)_p, \\
h_6 &= (p;c+d \cdot T_2/p;T_3)_2 \times(p;c+d\sqrt{-1},c-d\sqrt{-1};T_3)_p, \\
h_7 &= (1;p;1)_2 \times(1;p,p;1)_p.
\end{align*}
It is sufficient to prove that the above twelve elements are linearly independent.
\begin{enumerate}
\item By taking the first components at $v=p$ into account, we see that there is no relation containing $d_6$, $h_4$ and $h_6$.
\item By taking the fourth components at $v=p$ into account, we see that there is no relation containing $h_5$.
\item By taking the second and third components at $v=p$ into account, we see that there is no relation containing $h_2$ and $h_7$.
\item By taking the first components at $v=2$ into account, we see that there is no relation containing $d_5'$, and $h_3$.
\item By taking the third components at $v=2$ into account, we see that there is no relation containing $d_3'$, and $h_1$.
\item Finally, by taking the second components at $v=2$ into account, we see that there is no relation containing $d_4$ and $d_7'$.
\end{enumerate}
\end{enumerate}
\end{proof}
Therefore, by \cref{2 torsion02',Jp02',J202',H02',H+JS02'}, we obtain
\begin{align*}
\rank(J(\mathbb{Q}))  \leq \dim \Sel(\mathbb{Q},J)-\dim J(\mathbb{Q})[2]=7+7 -12-2=0. 
\end{align*}

This completes the proof of \cref{rank=0} $(4)$.


\section{Application of the Lutz-Nagell type theorem for hyperelliptic curves}
Let $p$ be a prime number, $i, j \in {\mathbb{Z}}_{\geq 0}$, and $f(x)=x(x^2+2^ip^{j})(x^2+2^{i+1}p^{j})$.
Let $ C$ be a hyperelliptic curve defined by $y^2=f(x)$ and $J$ be its Jacobian variety.

By taking \cref{rank=0} into account, \cref{MT} is an immediate consequence of the following proposition.

\begin{proposition} \label{AJ}
Let $P \in C(\mathbb{Q}) \setminus \{\infty\}$ such that $\phi(P) \in J(\mathbb{Q})_{\mathrm{tors}}$. 
\begin{enumerate}
\item Suppose that $p \neq 3$. Then, $P=(0,0)$.
\item Suppose that $p =3$ and $(i,j) \not\equiv (2,2)$, $(3,2) \pmod{4}$.
Then, $P=(0,0)$.
\end{enumerate}
\end{proposition}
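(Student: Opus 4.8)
The plan is to invoke Grant's Lutz--Nagell type theorem \cite[Theorem 3]{Grant} for the monic integral model $y^{2}=f(x)$, where $f(x)=x(x^{2}+a)(x^{2}+2a)$ and $a:=2^{i}p^{j}$. Computing the discriminant from the roots $0,\pm\sqrt{-a},\pm\sqrt{-2a}$ gives $\disc(f)=2^{7}a^{10}=2^{\,7+10i}p^{\,10j}$, whose only prime divisors are $2$ and $p$. Hence, for any $P=(x_{P},y_{P})\in C(\mathbb{Q})\setminus\{\infty\}$ with $\phi(P)\in J(\mathbb{Q})_{\mathrm{tors}}$, Grant's theorem forces $x_{P},y_{P}\in\mathbb{Z}$ together with a divisibility of $y_{P}$ by $\disc(f)$. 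In particular $y_{P}$, and therefore $x_{P}$ and each of the three factors $x_{P}$, $x_{P}^{2}+a$, $x_{P}^{2}+2a$, is supported only on the primes $2$ and $p$. This support condition is the crucial filter: it is exactly what rules out spurious candidates such as $(1,35)$ on $C^{(3;3,1)}$, whose $y$-coordinate carries the forbidden primes $5$ and $7$.

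If $y_{P}=0$, then $x_{P}$ is a rational root of $f$; since $a>0$ the quadratics $x^{2}+a$ and $x^{2}+2a$ have no real root, so $x_{P}=0$ and $P=(0,0)$, as desired. From now on assume $y_{P}\neq 0$. As $x_{P}^{2}+a$ and $x_{P}^{2}+2a$ are positive, the relation $y_{P}^{2}=x_{P}(x_{P}^{2}+a)(x_{P}^{2}+2a)$ forces $x_{P}>0$, and by the support condition we may write $x_{P}=2^{\alpha}p^{\beta}$ with $\alpha,\beta\ge 0$.

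The heart of the proof is an elementary but intricate analysis of $x_{P}(x_{P}^{2}+a)(x_{P}^{2}+2a)=y_{P}^{2}$ under the constraint that every factor is $\{2,p\}$-supported; I would carry it out prime by prime, tracking $v_{2}$ and $v_{p}$ of the three factors as functions of $\alpha,\beta,i,j$ and imposing that the total valuation at each prime be even. The binding configurations are the near-coprime one, in which two factors must individually be squares up to the small shared factors, and the degenerate alignments $x_{P}^{2}=a$ and $x_{P}^{2}=2a$. The near-coprime case reduces to a Pell-type relation $2c^{2}-d^{2}=1$; since any $\{2,p\}$-supported solution has $c,d$ odd, hence powers of $p$, reduction modulo $p$ leaves only $c=d=1$, i.e.\ $a=0$, for every $p$. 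The two degenerate alignments give products $6\,a^{5/2}$ and $12\,a^{2}\sqrt{2a}$ respectively, each carrying an unpaired factor $3$; these can therefore be squares only when $p=3$, which settles assertion $(1)$, and a parity count of the exponents of $2$ and $3$ shows they occur precisely for $(i,j)\equiv(2,2)$ and $(3,2)\pmod 4$. Excluding these residues, as in assertion $(2)$, eliminates the only surviving non-trivial solutions, leaving $P=(0,0)$.

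I expect the main obstacle to be the bookkeeping in this last step, namely ruling out all the intermediate alignments of $x_{P}^{2}$ with $a$. There one must combine the $p$-adic balance (the exponent of $p$ in the product must be even) with $2$-adic congruence obstructions modulo small powers of $2$, and it is precisely these $2$-adic square conditions that account for the ``$\bmod 4$'' shape of the excluded residues. Once every branch is shown to be either impossible or to reproduce an excluded $(i,j)$, the proposition follows; combined with \cref{rank=0} it yields \cref{MT}.
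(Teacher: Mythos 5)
Your strategy is the paper's strategy: apply Grant's theorem to get $x_P,y_P\in\mathbb{Z}$ and $y_P^2\mid\disc(f)=2^{7+10i}p^{10j}$ (your discriminant is correct, though note the divisibility runs this way, not ``$y_P$ divisible by $\disc(f)$''), conclude that $x_P$ and the two quadratic factors are supported on $\{2,p\}$, and finish by elementary Diophantine analysis of $y_P^2=x_P(x_P^2+a)(x_P^2+2a)$. You have also correctly located the source of the exceptions: the alignments $x_P^2=a$ and $x_P^2=2a$ carry an unpaired factor $3$, so they require $p=3$; they genuinely occur (e.g. $(6,216)$ on $C^{(3;2,2)}$ and $(12,864)$ on $C^{(3;3,2)}$); and the parity count on exponents gives exactly $(i,j)\equiv(2,2),(3,2)\pmod 4$.

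But there is a genuine gap: your assertion that the only ``binding configurations'' are the near-coprime one and the two alignments is precisely the content of the paper's proof (its long case analysis on $i,j$ modulo $4$), and you do not prove it --- you defer it as ``bookkeeping''. What must actually be shown is, concretely: writing $x_P=2^\alpha p^\beta$, $\{2,p\}$-support of $x_P^2+a$ forces $2\alpha=i$ or $2\beta=j$ (in all other positions the cofactor of $2^{\min(2\alpha,i)}p^{\min(2\beta,j)}$ exceeds $1$ and is prime to $2p$); support of $x_P^2+2a$ likewise forces $2\alpha=i+1$ or $2\beta=j$; hence $2\beta=j$ always. Then, in the non-aligned cases, say $2\alpha<i$, the odd cofactors $T=1+2^{i-2\alpha}$ and $T'=1+2^{i+1-2\alpha}=2T-1$ must both be powers of $p$, which is impossible because they are coprime and both exceed $1$ (similarly when $2\alpha>i+1$). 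These intermediate configurations are neither your Pell case (the factors need not be squares there) nor the alignments, and they are where essentially all of the paper's work lies; moreover they die for every $p$, so your closing suggestion that the $2$-adic analysis of the intermediate alignments ``accounts for the mod $4$ shape'' is off --- the mod $4$ conditions come solely from the two alignment cases. Until this enumeration is carried out, neither assertion $(1)$ nor assertion $(2)$ is established.
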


This proposition follows from the following Lutz-Nagell type theorem.
\begin{theorem} [{cf. \cite[Theorem 3]{Grant}}] \label{GLN}
Let $\phi:C \to J$ be the Abel-Jacobi map defined by $\phi(P)=[P- \infty]$.
If $P=(a,b) \in C(\mathbb{Q}) \setminus \{\infty\}$ and $\phi(P) \in J(\mathbb{Q})_{\mathrm{tors}}$, then $a,b \in \mathbb{Z}$ and either $b=0$ or $b^2 \mid \disc(f)$.
\end{theorem}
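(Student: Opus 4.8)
The plan is to follow the classical template for the Lutz--Nagell theorem, with two substitutions: the group law of an elliptic curve is replaced by the group law on $J$ expressed through Mumford representations, and the ``reduction mod $\ell$'' input is replaced by the formal group of $J$ at the origin. Throughout I identify a point of $J$ lying off the image of the theta divisor with its reduced Mumford pair $\langle u,v\rangle$, where $u$ is monic of degree $2$ and $\deg v\le 1$, and I call such a point \emph{integral at $\ell$} when the coefficients of $u$ and $v$ lie in $\mathbb{Z}_\ell$. A point of $\phi(C)$, namely $[P-\infty]=\langle x-a,\,b\rangle$ for $P=(a,b)$, is integral at $\ell$ precisely when $a,b\in\mathbb{Z}_\ell$, so $a,b\in\mathbb{Z}$ is exactly integrality of $\phi(P)$ at every prime (and if $a\in\mathbb{Z}_\ell$ then $b\in\mathbb{Z}_\ell$ automatically, since $b^2=f(a)$ and $f$ is monic).

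First I would prove integrality of \emph{every} torsion point of $J(\mathbb{Q})$, working one prime at a time. Fix $\ell$ and suppose a torsion point is non-integral at $\ell$; then it lies in a formal neighbourhood $\widehat{J}(\ell^{m}\mathbb{Z}_\ell)$ of the origin with $m\ge 1$. For $P=(a,b)$ with $v_\ell(a)<0$ this is visible directly: since $1/x$ has a double zero at $\infty$, $v_\ell(a)$ is even and $\phi(P)$ sits at formal depth $m=-v_\ell(a)/2\ge 1$. For $\ell\ge 3$, and for $\ell=2$ with $m\ge 2$, the group $\widehat{J}(\ell^{m}\mathbb{Z}_\ell)$ is torsion-free, so the point must be $0$ and hence $P=\infty$, a contradiction. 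The one remaining case is $\ell=2$, $m=1$: here any torsion element of $\widehat{J}(2\mathbb{Z}_2)$ is killed by $2$, because $[2]$ carries $\widehat{J}(2\mathbb{Z}_2)$ into the torsion-free group $\widehat{J}(4\mathbb{Z}_2)$, so it suffices to exclude a non-integral $2$-torsion point. This follows from the explicit description of $J(\mathbb{Q})[2]$ recorded earlier (it is supported on Weierstrass points, whose $x$-coordinates are roots of the monic integral $f$, hence give integral Mumford data), together with the observation that $\langle(x-a)^2,v\rangle$ is never the identity, so $[P-\infty]$ with $b\ne0$ is not $2$-torsion. This yields $a,b\in\mathbb{Z}$; if $b=0$ we are already in the first alternative of the theorem.

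Assume now $b\ne0$, and double. Since $\phi(P)$ is torsion, so is $2\phi(P)=[2P-2\infty]$, and its reduced representation is $\langle (x-a)^2,\,v\rangle$ with $v(x)=b+\tfrac{f'(a)}{2b}(x-a)$, the tangent line to $C$ at $P$; the condition $(x-a)^2\mid f-v^2$ holds automatically because $v$ agrees with a square root of $f$ to second order at $a$, and no further reduction occurs because the $u$-part already has degree $g=2$. Applying the integrality just established to the torsion point $2\phi(P)$ forces the leading coefficient $\tfrac{f'(a)}{2b}$ of $v$ to be an integer, i.e. $2b\mid f'(a)$, so $v_\ell(f'(a))\ge v_\ell(b)$ for every prime $\ell$. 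To upgrade this linear divisibility to $b^{2}\mid\disc(f)$ I would run a local valuation count: fix $\ell$ with $k:=v_\ell(b)\ge1$, factor $f=\prod_{i=1}^{5}(x-\alpha_i)$ over $\ol{\mathbb{Z}}_\ell$, and put $c_i:=v_\ell(a-\alpha_i)\ge0$, so that $\sum_i c_i=v_\ell(f(a))=2k$. The bound $v_\ell(f'(a))\ge k$ forces $\max_i c_i\le k$ (either the maximal $c_i$ is unique, in which case $v_\ell(f'(a))=2k-\max_i c_i$, or it is attained at least twice, in which case $2\max_i c_i\le\sum_i c_i=2k$). Ordering $c_1\ge\cdots\ge c_5$ and using the ultrametric inequality $v_\ell(\alpha_i-\alpha_j)\ge\min(c_i,c_j)$, one gets $\sum_{i<j}v_\ell(\alpha_i-\alpha_j)\ge\sum_{i<j}\min(c_i,c_j)=\sum_{j}(j-1)c_j\ge c_2+c_3+c_4+c_5=2k-c_1\ge k$, whence $v_\ell(\disc(f))=2\sum_{i<j}v_\ell(\alpha_i-\alpha_j)\ge 2k=2\,v_\ell(b)$. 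As this holds at every prime, $b^{2}\mid\disc(f)$.

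The main obstacle I anticipate is the prime $\ell=2$ in the integrality step: because $\widehat{J}(2\mathbb{Z}_2)$ can carry $2$-torsion, torsion-freeness alone does not close the argument as it does at odd primes, and one must supply the explicit structure of $J(\mathbb{Q})[2]$ and the non-triviality of the doubled Mumford representation. A secondary technical point, on which the whole doubling argument rests, is verifying that the reduced representation of $2\phi(P)$ is exactly $\langle(x-a)^2,v\rangle$ with no further reduction, so that integrality of torsion points can be applied to read off $2b\mid f'(a)$.
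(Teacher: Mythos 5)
A preliminary remark: the paper never proves this theorem --- it is imported wholesale from \cite[Theorem 3]{Grant} --- so your attempt can only be judged on its own merits and against Grant's method, which, like your sketch, is a formal-group transposition of the classical Lutz--Nagell argument. Much of what you write is sound: the reduction of the $\ell=2$, $m=1$ case to $2$-torsion via $[2]\widehat{J}(2\mathbb{Z}_2)\subset\widehat{J}(4\mathbb{Z}_2)$ is correct; the identification of the reduced Mumford representation of $2\phi(P)$ as $\langle (x-a)^2,\, b+\tfrac{f'(a)}{2b}(x-a)\rangle$ is correct (and your worry about further reduction is unfounded, since the $u$-part already has degree $g=2$ and $\iota P\neq P$); and the closing valuation count is complete and correct: with $c_i=v_\ell(a-\alpha_i)$, $\sum_i c_i=2v_\ell(b)$, the bound $v_\ell(f'(a))\geq v_\ell(b)$ forces $\max_i c_i\leq v_\ell(b)$ in both the unique-maximum and repeated-maximum cases, and the ultrametric estimate then gives $v_\ell(\disc(f))=2\sum_{i<j}v_\ell(\alpha_i-\alpha_j)\geq 2v_\ell(b)$, which is exactly $b^2\mid\disc(f)$.

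The genuine gap is the step you lean on hardest: \emph{integrality of the torsion point $2\phi(P)$}, which is what produces $2b\mid f'(a)$. Your integrality argument establishes the implication ``non-integral $\Rightarrow$ lies in a formal neighbourhood of the origin'' only for points on $\Theta=\phi(C)$, where it holds because the degree-one Mumford chart on $\Theta$ degenerates only at the origin. Off $\Theta$ it is false: the coordinates $(u_1,u_0,v_1,v_0)$ are regular only on $J\setminus\Theta$, so a point whose reduction lands on $\ol{\Theta}$ \emph{away from} the origin (for instance $[P_1+P_2-2\infty]$ with $P_1$ reducing to $\infty$ and $P_2$ reducing to an affine non-Weierstrass point) has non-integral Mumford data without being anywhere near the kernel of reduction, and such a point can perfectly well be torsion. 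So ``torsion points of $J(\mathbb{Q})$ have integral Mumford data'' is not a correct general principle, and applying ``the integrality just established'' to $2\phi(P)\notin\phi(C)$ is an unjustified leap. At primes of \emph{good} reduction the specific case at hand can be patched: if $\lambda=f'(a)/2b$ is non-integral at an odd good $\ell$ then $\ell\mid b$, the reduction of $2\phi(P)$ is $[2\ol{P}-2\ol{\infty}]$ with $\ol{P}$ a Weierstrass point, hence the identity, so $2\phi(P)$ lies in the torsion-free kernel of reduction and vanishes, contradicting $b\neq 0$. But for the curves of this paper the assertion $b^2\mid\disc(f)$ has content only at $\ell=2$ and $\ell=p$, which are precisely primes of \emph{bad} reduction of the model $y^2=f(x)$: there the special fibre of the N\'eron model is not the Jacobian of a smooth curve, the reduction of $[2P-2\infty]$ cannot be computed divisor-wise, and the link between non-integrality of the $v$-part and membership in $\widehat{J}(\ell^m\mathbb{Z}_\ell)$ is exactly what remains unproved. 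Supplying integral coordinates and an integral formal group law valid at \emph{all} primes, so that non-integrality of the relevant coordinate of a torsion point genuinely forces it into the formal group, is the substantive technical content of Grant's proof, and it is the missing idea here; your own flagged worries (the prime $2$, and the shape of the doubled representation) are the parts of the argument that actually do go through.
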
 
Note that Grant \cite{Grant} proved the above Lutz-Nagell type theorem in more general settings.
\begin{proof}  [Proof of \cref{AJ}] 
\begin{enumerate}
\item
We prove this statement by contradiction.
Suppose that $P=(a,b) \in C(\mathbb{Q}) \setminus \{\infty\}$ satisfies $\phi(P) \in J(\mathbb{Q})_{\mathrm{tors}}$ and $b \neq 0$. 
Then, \cref{GLN} implies that $a$, $b \in \mathbb{Z}$ and $b^2 \mid \disc(f)$, hence $b=\pm 2^kp^l$ for some $k$, $l \in \mathbb{Z}_{\geq 0}$. 
In particular, every prime divisor of $f(a)  \in \mathbb{Z}_{\geq 0}$ is $2$ or $p$.

First, if $l=0$, then $f(a)$ is a power of $2$, which contradicts that $1<(a^2+2^{i+1}p^j)/(a^2+2^ip^j)<2$. Thus, we may assume that $p$ is odd and $l>0$.

\begin{itemize}
\item Suppose that $j=0$. Then,  $\disc(f)$ is divisible only by $2$, i.e. $l=0$, which is impossible.
\end{itemize}
Thus, $j$ must be positive, and there exists $a_1 \in \mathbb{Z}$ such that $a=pa_1$, i.e.
\[ b^2=p^3a_1(pa_1^2+2^ip^{j-1})(pa_1^2+2^{i+1}p^{j-1}).\]
\begin{itemize}
\item Suppose that $j=1$.
Since $p$ is odd, neither $pa_1^2+2^i$ nor $pa_1^2+2^{i+1}$ is divisible by $p$. Thus, both $pa_1^2+2^i$ and $pa_1^2+2^{i+1}$ must be powers of $2$, which contradicts that $1<(pa_1^2+2^{i+1})/(pa_1^2+2^i)<2$.

\item Suppose that $j=2$.
Then,  \[b^2=p^5a_1(a_1^2+2^i)(a_1^2+2^{i+1}).\]
\begin{itemize}
\item Suppose that $i=0$. 
If $a_1$ is odd, then  the right hand side is divisible by $2$ exactly odd times, which is impossible. 
If $a_1$ is even, then $a_1^2+1>1$ and $(a_1^2+2)/2>1$ must be coprime powers of $p$, which is impossible. 
\item Suppose that $i=1$. 
If $a_1$ is odd, then $a_1^2+2>1$ and $a_1^2+4>1$ must be coprime powers of $p$, which is impossible.
If $a_1$ is even, then there exists $a_2 \in \mathbb{Z}$ such that $a_1=2a_2$, i.e \[b^2=2^4p^5a_2(2a_2^2+1)(a_2^2+1).\]
If $a_2$ is odd, then the right hand side is divisible by $2$ exactly odd times, which is impossible.
If $a_2$ is even, then $2a_2^2+1>1$ and $a_2^2+1>1$ must be coprime powers of $p$, which is impossible.
\item Suppose that $i=2$. 
If $a_1$ is odd, then $a_1^2+4>1$ and $a_1^2+8>1$ must be coprime powers of $p$, which is impossible.
If $a_1$ is even, then there exists $a_2 \in \mathbb{Z}$ such that $a_1=2a_2$, i.e. \[b^2=2^5p^5a_2(a_2^2+1)(a_2^2+2).\]
If $a_2$ is odd, then $a_2$ and $a_2^2+2$ are coprime powers of $p$. Thus $a_2=1$, and the right hand side is divisible by $p \neq 3$ exactly odd times, which is impossible.
If $a_2$ is even, then $a_2^2+1>1$ and $(a_2^2+2)/2>1$ must be coprime powers of $p$, which is impossible.

\item Suppose that $i=3$. 
If $a_1$ is odd, then $a_1^2+8>1$ and $a_1^2+16>1$ must be coprime powers of $p$, which is impossible.
If $a_1$ is even, then there exists $a_2 \in \mathbb{Z}$ such that $a_1=2a_2$, i.e. \[b^2=2^5p^5a_2(a_2^2+2)(a_2^2+4).\]
If $a_2$ is odd, then $a_2^2+2>1$ must be $a_2^2+4>1$ are coprime powers of $p$, which is impossible.
If $a_2$ is even, then there exists $a_3 \in \mathbb{Z}$ such that $a_2=2a_3$, i.e. \[b^2=2^9p^5a_3(2a_3^2+1)(a_3^2+1).\]
If $a_3$ is odd, then $a_3$ and $2a_3^2+1$ are coprime powers of $p$. Thus, $a_3=1$, and the right hand side is divisible by $p \neq 3$ exactly odd times, which is impossible.
If $a_3$ is even, then $2a_3^2+1>1$ and $a_3^2+1>1$ must be coprime powers of $p$, which is impossible.
\item Suppose that $i \geq 4$. 
If $a_1$ is odd, then $a_1^2+2^i>1$ and $a_1^2+2^{i+1}>1$ must be coprime powers of $p$, which is impossible.
If $a_1$ is even, then there exists $a_2 \in \mathbb{Z}$ such that $a_1=2a_2$, i.e. \[b^2=2^5p^5a_2(a_2^2+2^{i-2})(a_2^2+2^{i-1}).\]
If $a_2$ is odd, then $a_2^2+2^{i-2}>1$ and $a_2^2+2^{i-1}>1$ must be coprime powers of $p$, which is a impossible.
If $a_2$ is even, then there exist $a_3$, $b_1 \in \mathbb{Z}$ such that $(a_2,b)=(2a_3,2^5b_1)$, i.e. \[b_1^2=p^5a_3(a_3^2+2^{i-4})(a_3^2+2^{i-3}). \]
Therefore, by a simple induction on $i$, we obtain the claim.
\end{itemize}

\item The case $j=3$ is similar to the case $j=1$.
\item 
If $j \geq 4$, then there exist $a_2$, $b_1 \in \mathbb{Z}$ such that $(a_1,b)=(pa_2,p^5b_1)$, i.e.
\[b_1^2=a_2(a_2^2+2^ip^{j-4})(a_2^2+2^{i+1}p^{j-4}).\]
Therefore, by a simple induction on $j$, we obtain the claim.
\end{itemize}
This completes the proof.
\item The proof of $(2)$ is exactly similar to $(1)$. Note that, in the above proof, we used the condition $p \neq 3$ only in the cases $(i,j) \equiv (2,2)$, $(3,2) \pmod{4}$.
\end{enumerate}
\end{proof}

\section{Concluding remarks}

If a hyperelliptic curve $C$ covers an elliptic curve $E$ over $\mathbb{Q}$, then we can determine $C(\mathbb{Q})$ by determining $E(\mathbb{Q})$.
\fn{For example, $y^2=(x^2+1)(x^2+1+p)(x^2+1-p)$ with a prime $p \equiv 3 \pmod{8}$ has no rational solution.} 
However, our hyperelliptic curves in \cref{MT} do not cover elliptic curves over $\mathbb{Q}$. 
Indeed, the congruent zeta functions of $C^{(p;i,j)}$ with $p \neq 11$ over $\mathbb{F}_{11}$ are \[Z(C^{(p;i,j)}/\mathbb{F}_{11},T)=\frac{1+14T^2+121T^4}{(1-T)(1-11T)}, \]
whose numerators are irreducible in $\mathbb{Q}[T]$.
\fn{Similarly, by calculating the congruent zeta functions over $\mathbb{F}_{13}$, we can check that $C^{(11;i,j)}$ do not cover elliptic curves over $\mathbb{Q}$.}

On the other hand, it might be possible to prove \cref{MT} by determining the set of rational points of some elliptic curves over number fields covered by our hyperelliptic curves. 
There are two obstructions in this direction.
First, it is a non-trivial problem to find such elliptic curves of Mordell-Weil rank $0$. 
Secondly, since the unit group of number fields are infinite in general,
the Lutz-Nagell theorem is not sufficient to determine the torsion points of elliptic curves over number fields.

\appendix

\section{}
In this appendix, we prove the following proposition.
\begin{proposition} \label{ff}
Let $i, j\in {\mathbb{Z}}_{\geq 0}$ and $C^{(i, j)}$ be a hyperelliptic curve over $\mathbb{Q}(t)$ defined by
\[ C^{(i, j)} : y^2=x(x^2+2^it^{j})(x^2+2^{i+1}t^{j}). \]
Then, $C^{(i, j)}(\mathbb{Q}(t))=\{(0,0),\infty\}$.
\end{proposition}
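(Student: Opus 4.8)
The plan is to run the function-field analogue of the descent in the proof of \cref{AJ}, with the odd prime $p$ there replaced by the variable $t$ and with the congruence conditions replaced by the $abc$-theorem for polynomials (Mason--Stothers, in the form of \cite[Theorem]{Snyder}). The essential simplification over the number-field situation is that $2$ is a \emph{unit} in $\mathbb{Q}[t]$, so no descent on $i$ is ever needed; only the prime $t$ matters.

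First I would pass from the rational point to a polynomial identity. Let $P=(x,y)\in C^{(i,j)}(\mathbb{Q}(t))\setminus\{\infty\}$. Since $f$ has odd degree $5$ and $\infty$ is the Weierstrass point, writing $x=a/e^{2}$ in lowest terms (with $a,e\in\mathbb{Q}[t]$, $\gcd(a,e)=1$) forces the denominator of $y$ to be $e^{5}$, and clearing denominators yields
\[ b^{2}=a\,(a^{2}+2^{i}t^{j}e^{4})\,(a^{2}+2^{i+1}t^{j}e^{4}),\qquad b:=ye^{5}\in\mathbb{Q}[t]. \]
It suffices to prove $a=0$, since then $P=(0,0)$. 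Set $U:=a^{2}+2^{i}t^{j}e^{4}$ and $V:=a^{2}+2^{i+1}t^{j}e^{4}$, so $b^{2}=aUV$, and record the $abc$-relations $U-a^{2}=2^{i}t^{j}e^{4}$ and $V-a^{2}=2^{i+1}t^{j}e^{4}$. Because $2$ is a unit and $\gcd(a,e)=1$, every pairwise greatest common divisor of $a,U,V$ divides a power of $t$ (for instance $\gcd(a,U)\mid\gcd(a,2^{i}t^{j}e^{4})\mid t^{j}$, while $\gcd(U,V)\mid\gcd(U,2^{i}t^{j}e^{4})\mid t^{j}$ using $\gcd(U,e)=\gcd(a^{2},e)=1$). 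Hence, away from $t$, the three factors are pairwise coprime; as their product is the square $b^{2}$, unique factorisation in $\overline{\mathbb{Q}}[t]$ lets me write $a=c_{1}t^{\varepsilon_{1}}s_{1}^{2}$, $U=c_{2}t^{\varepsilon_{2}}s_{2}^{2}$, $V=c_{3}t^{\varepsilon_{3}}s_{3}^{2}$ with $c_{k}$ constants, $\varepsilon_{k}\in\{0,1\}$, and $s_{1},s_{2},s_{3}$ pairwise coprime and coprime to $t$ and to $e$.

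The main line of attack treats the case $t\nmid a$. Substituting the square decomposition into $U-a^{2}=2^{i}t^{j}e^{4}$ gives the coprime three-term relation $c_{2}s_{2}^{2}-c_{1}^{2}s_{1}^{4}=2^{i}t^{j}e^{4}$ (and likewise for $V$), whose summands are pairwise coprime since the only shared prime $t$ sits in the right-hand term. Then \cite[Theorem]{Snyder} bounds the largest degree by $\deg\mathrm{rad}(\cdot)-1\le \deg s_{1}+\deg s_{2}+\deg e$, the radical contributing at most one for $t$ and one for each squarefree kernel. Comparing this with the left-hand side, where $s_{1}$ and $s_{2}$ occur to the fourth and second powers, together with the degree identity $\deg U=\deg V=\max(4\deg s_{1},\,j+4\deg e)$, yields inequalities such as $3\deg s_{1}\le\deg s_{2}+\deg e$ and $j+3\deg e\le\deg s_{1}+\deg s_{2}$ which collapse to $j\le 0$; thus for $j\ge1$ no solution with $t\nmid a$ exists.

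The remaining, genuinely delicate, case is $t\mid a$, and I expect this $t$-adic bookkeeping to be the main obstacle. Here the three summands share the factor $t$, so Mason--Stothers does not apply verbatim; instead a parity analysis of $v_{t}(b^{2})=v_{t}(a)+v_{t}(U)+v_{t}(V)$, exactly paralleling the $p$-adic valuation count in the proof of \cref{AJ}, lets me strip off a power of $t$ from $a$ and descend to the same equation with $j$ replaced by a strictly smaller exponent. Iterating reduces to the base cases $j\le1$ and to the case $t\nmid a$ already handled, so that $a\ne0$ is impossible and $P=(0,0)$ follows. The one case this argument does not reach is $j=0$, where $C^{(i,0)}$ is a constant curve of genus $2$; there $C^{(i,0)}(\mathbb{Q}(t))=C^{(i,0)}(\mathbb{Q})$ and the statement must be obtained by a separate argument rather than from \cref{ff} via polynomials.
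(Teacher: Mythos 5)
Your plan correctly identifies the two mechanisms (abc degree counts when $t \nmid a$, $t$-adic descent when $t \mid a$), and for $j \geq 1$ I believe it can be completed: your degree collapse in the case $t \nmid a$ is right (the three inequalities coming from \cref{abc} force $\deg s_1 \leq \deg e$ and then $j \leq 0$), and the $t \mid a$ descent does go through, with the base cases $j \in \{1,2,3\}$ again falling to \cref{abc} because the terms carrying $t$ are automatically non-constant. (One slip to repair: when $t \mid a$ you cannot have both $\varepsilon_k \in \{0,1\}$ and $s_k$ coprime to $t$; even powers of $t$ have to be absorbed into $s_k^2$, so drop the coprimality of $s_k$ with $t$.) The genuine gap is the case $j=0$, which is part of the statement ($j \in \mathbb{Z}_{\geq 0}$) and which you explicitly set aside. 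This is not a removable edge case but the arithmetic core of the proposition: as you say, $C^{(i,0)}(\mathbb{Q}(t)) = C^{(i,0)}(\mathbb{Q})$, and no polynomial abc argument can determine the rational points of this constant genus-$2$ curve, since all the data are constants --- exactly the situation excluded in \cref{abc}. The paper supplies the required arithmetic input as \cref{Fermat}: Fermat's descent showing that $X^4+Y^4=Z^2$ and $X^4+4Y^4=Z^2$ have no solutions in positive integers. Without this (or an equivalent argument), \cref{ff} is not proved.

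It is also worth recording how the paper's route differs from yours, since it handles all $j \geq 0$ uniformly. In weighted coordinates $[X:Y:Z]$ the paper substitutes $t=s^4$, $U=s^jY$, $V=s^{2j}Z$; this absorbs $t^j$ entirely and turns $C^{(i,j)}$ into the \emph{constant} curve $U^2=XV(X^2+2^iV^2)(X^2+2^{i+1}V^2)$ over $\mathbb{Q}(s)$, so no induction on $j$ and no splitting into $t \mid a$ versus $t \nmid a$ is ever needed. Pairwise-coprime factorization then produces a Fermat quartic $U_1^4+U_5^4=U_3^2$ or $U_1^4+4U_5^4=U_4^2$ in $\mathbb{Z}[s]$ (according to $i$ modulo $4$); \cref{abc} forces the entries to be integers, and \cref{Fermat} finishes. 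So in the paper \cref{abc} serves only to reduce to the constant case and \cref{Fermat} does the killing, whereas in your approach \cref{abc} does all the work for $j \geq 1$ (Fermat's theorem is never needed there), at the price that $j=0$ is left untouched. If you add \cref{Fermat} and treat $j=0$ by the same coprime-factorization argument over $\mathbb{Z}$ --- or simply adopt the $t=s^4$ trick --- your proof becomes complete.
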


This is a consequence of the following ``abc conjecture for polynomials".
\begin{theorem} [{cf. \cite[Theorem]{Snyder}}] \label{abc} 
Let $k$ be a field of characteristic $0$, and $a$, $b$ and $c \in k[x]$ be coprime polynomials such that $a+b=c$.
Suppose that $a$, $b$ and $c$ are not all constants. Then, $\deg (c) < n_0(abc)$,
where $n_0(abc)$ is the number of distinct zeroes of $abc$.
\end{theorem}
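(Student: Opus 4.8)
The plan is to take any $P=(x,y)\in C^{(i,j)}(\mathbb{Q}(t))$ with $P\neq(0,0),\infty$, hence $x\in\mathbb{Q}(t)^{\times}$, write $x=u/w$ with $u,w\in\mathbb{Q}[t]$ coprime and $u\neq 0$, and show no such $P$ exists once $j\geq 1$ (disposing of $j=0$ separately). Clearing denominators in $y^2=f(x)$ gives
\[
y^2w^5 \;=\; u\,(u^2+2^it^jw^2)\,(u^2+2^{i+1}t^jw^2)\;=:\;u\,A\,B .
\]
First I would note that $\gcd(uAB,w)=1$, since modulo every prime $\pi\mid w$ one has $A\equiv B\equiv u^2$ while $u$ is coprime to $\pi$ (as $\gcd(u,w)=1$). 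Writing $y=P/Q$ in lowest terms and comparing the $\pi$-adic valuations of $P^2w^5=Q^2\,uAB$ for each $\pi\mid w$ forces $v_\pi(w)$ to be even; thus $w$ is a constant times a square, and then $uAB$ is a constant times a square.

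Next I would isolate the prime $t$. Away from $t$ the factors $u,A,B$ are pairwise coprime: $\gcd(A,B)\mid 2^it^jw^2$, while $A,B\equiv u^2\pmod{\pi}$ for $\pi\mid w$, and $\gcd(u,A)=\gcd(u,t^jw^2)$. In the principal case $t\nmid u$ one has $A\equiv B\equiv u^2\not\equiv 0\pmod t$, so $u,A,B$ are genuinely pairwise coprime; being a constant times a square, each of them is individually a constant times a square. Writing $u=\gamma U^2$, $w=c_0w_0^2$ and $A=\alpha V^2$, the defining identity $A=u^2+2^it^jw^2$ becomes, after absorbing constants,
\[
U^4+t^jW^4 \;=\; c\,V^2,
\]
where $U,W,V\in\overline{\mathbb{Q}}[t]$ are pairwise coprime and coprime to $t$, and $c\in\overline{\mathbb{Q}}^{\times}$.

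The core of the proof is then the assertion that, for $j\geq 1$, the equation $U^4+t^jW^4=cV^2$ admits no solution with $U,W,V$ not all constant. I would deduce this from \cref{abc} applied to $a=U^4$, $b=t^jW^4$, $c'=cV^2$, which are coprime with $a+b=c'$: since $abc'$ has at most $\deg U+\deg W+\deg V+1$ distinct zeros, writing $p=\deg U$, $n=\deg W$, $q=\deg V$ the three bounds $\deg a,\deg b,\deg c'<n_0(abc')$ give $3p\leq n+q$, $j+3n\leq p+q$ and $q\leq p+n$. The first and third yield $p\leq n$; substituting into the second gives $q\geq j+2n$, whereas the third gives $q\leq 2n$, so $j\leq 0$, a contradiction. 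This excludes every point whose $x$-coordinate is non-constant with $t\nmid u$.

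It remains to treat the boundary situations, which I expect to be the delicate part. A constant $x=c\neq 0$ makes $f(c)=c(c^2+2^it^j)(c^2+2^{i+1}t^j)$ a product of a constant and two coprime binomials $c^2+2^{i+\varepsilon}t^j$; such a binomial is a constant times a square only if its cross terms vanish, forcing $c=0$, so the only constant point is $(0,0)$. The case $t\mid u$ I would handle by descent on $k=v_t(u)$: splitting $A,B$ according to $2k<j$, $2k>j$, $2k=j$ and cancelling the common power of $t$ returns, in the first two cases, a relation $U^4+t^{\,|j-2k|}W^4=cV^2$ with exponent $\geq 1$, again killed by the core lemma. The genuinely resistant case is $2k=j$, which collapses to $x=c\,t^{j/2}$ and thus to $f=c(c^2+2^i)(c^2+2^{i+1})\,t^{5k}$; the $t$-exponent parity already rules this out unless $j\equiv 0\pmod 4$, and in that remaining range one is reduced to showing that $c(c^2+2^i)(c^2+2^{i+1})$ is never a nonzero rational square, i.e. to the rational points of the constant curve $C^{(i,0)}$. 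This reduction to $C^{(i,0)}(\mathbb{Q})$, which lies outside the reach of \cref{abc}, is the main obstacle.
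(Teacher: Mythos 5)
Your proposal does not prove the statement it is attached to. The statement is \cref{abc}, the Mason--Stothers theorem (``abc for polynomials''); what you have written is instead a proof sketch of \cref{ff}, the appendix application, and it explicitly invokes \cref{abc} as its ``core lemma''. Read as a proof of \cref{abc} it is therefore circular, and nothing in it establishes the degree bound $\deg(c) < n_0(abc)$. Note that the paper itself offers no proof here: the theorem is imported from Snyder's article. A self-contained argument runs via the Wronskian: set $W = ab' - a'b$; from $a+b=c$ one gets $W = ac'-a'c = -(bc'-b'c)$, and $W \neq 0$ since $a$, $b$ are coprime and (in characteristic $0$) not all of $a,b,c$ are constant. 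Each of $\gcd(a,a')$, $\gcd(b,b')$, $\gcd(c,c')$ divides $W$, and since $a$, $b$, $c$ are pairwise coprime their product divides $W$, whence $(\deg a - n_0(a)) + (\deg b - n_0(b)) + (\deg c - n_0(c)) \leq \deg W \leq \deg a + \deg b - 1$, i.e. $\deg c \leq n_0(abc) - 1$. None of this apparatus (derivatives, Wronskians, the coprimality bookkeeping) appears in your proposal.

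Even judged charitably as a blind proof of \cref{ff}, the argument is incomplete by your own admission: the boundary case $2k = j \equiv 0 \pmod{4}$ collapses to deciding whether $c(c^2+2^i)(c^2+2^{i+1})$ can be a nonzero rational square, i.e. to the rational points of the constant curve $C^{(i,0)}$, which you leave open as ``the main obstacle''. The paper's appendix avoids this case split entirely: it substitutes $t=s^4$, passes to a weighted-homogeneous model over $\mathbb{Z}[s]$, uses \cref{abc} once to force the resulting quartic relations $U_1^4+U_5^4=U_3^2$ or $U_1^4+4U_5^4=U_4^2$ to have only constant (integer) solutions, and then kills those by Fermat's infinite descent (\cref{Fermat}). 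That descent lemma is precisely the ingredient missing from your residual case, so your route cannot close without it.
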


\begin{lemma} \label{Fermat} 
There exist no $X$, $Y$, $Z \in \mathbb{Z}_{>0}$ such that
\begin{enumerate}
\item $X^4+Y^4=Z^2$ or
\item $X^4+4Y^4=Z^2$.
\end{enumerate}
\end{lemma}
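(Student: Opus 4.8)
The plan is to prove both non-existence statements by Fermat's classical method of infinite descent, organised around the parametrisation of primitive Pythagorean triples. In each case I would assume a solution in positive integers exists, choose one with $Z$ minimal, and first reduce to the coprime case: if a prime $\ell$ divides $\gcd(X,Y)$ then $\ell^{4}\mid Z^{2}$, so $\ell^{2}\mid Z$ and $(X/\ell,Y/\ell,Z/\ell^{2})$ is a strictly smaller solution, contradicting minimality. Hence I may assume $\gcd(X,Y)=1$ throughout.

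For part (1), from $\gcd(X,Y)=1$ the relation $(X^{2})^{2}+(Y^{2})^{2}=Z^{2}$ is a primitive Pythagorean triple; since two odd squares cannot sum to a square, exactly one of $X,Y$ is even, say $Y$. Writing $X^{2}=m^{2}-n^{2}$, $Y^{2}=2mn$, $Z=m^{2}+n^{2}$ with $\gcd(m,n)=1$ of opposite parity, the relation $X^{2}+n^{2}=m^{2}$ is again primitive, so $X=p^{2}-q^{2}$, $n=2pq$, $m=p^{2}+q^{2}$. Substituting gives $(Y/2)^{2}=pq(p^{2}+q^{2})$ with $p,q,p^{2}+q^{2}$ pairwise coprime, whence each factor is a perfect square, say $p=a^{2}$, $q=b^{2}$, $p^{2}+q^{2}=c^{2}$. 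Then $a^{4}+b^{4}=c^{2}$ with $c\le c^{2}=m<Z$, a strictly smaller solution, and the descent closes.

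For part (2), I would first show that $X$ must be odd: if $X$ were even then $Y$ is odd and a congruence modulo $16$ forces $Z=2Z'$, after which dividing by $4$ turns $X^{4}+4Y^{4}=Z^{2}$ into $Y^{4}+4(X/2)^{4}=Z'^{2}$, a smaller instance of the same equation. With $X$ odd, $(X^{2},2Y^{2},Z)$ is a primitive Pythagorean triple, giving $X^{2}=m^{2}-n^{2}$, $Y^{2}=mn$, $Z=m^{2}+n^{2}$ with $\gcd(m,n)=1$; coprimality forces $m=a^{2}$, $n=b^{2}$, so $X^{2}+(b^{2})^{2}=(a^{2})^{2}$. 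A parity check (two odd legs being impossible) shows $a$ odd and $b$ even, and one further parametrisation gives $X=p^{2}-q^{2}$, $b^{2}=2pq$, $a^{2}=p^{2}+q^{2}$. Finally $b^{2}=2pq$ with $\gcd(p,q)=1$ forces, after extracting the factor $2$, a representation $p=2s^{2}$, $q=t^{2}$ (or the symmetric one), so that $a^{2}=p^{2}+q^{2}$ reads $t^{4}+4s^{4}=a^{2}$ with $a\le a^{2}=m<Z$: again a strictly smaller solution of the original equation, completing the descent.

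The substantive work is not any single algebraic identity but the bookkeeping that keeps every intermediate Pythagorean triple primitive and every product of pairwise-coprime factors forced to consist of perfect squares. The main obstacle is therefore the careful parity analysis --- in particular the step forcing $X$ odd in part (2) and the elimination of the ``two odd legs'' configuration --- together with verifying at each stage that the new solution is strictly smaller, which is exactly what powers the infinite descent. All of this is elementary and uses nothing beyond unique factorisation in $\mathbb{Z}$ and the classical parametrisation of primitive Pythagorean triples.
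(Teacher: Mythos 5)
Your proposal is correct, but it takes a genuinely different route from the paper's. The paper does not prove part (1) at all---it simply invokes Fermat's classical theorem---and it proves part (2) by attempting a reduction to part (1): assuming $X$ even, two successive Pythagorean parametrizations lead to $Y^2=U^4-V^4$ and then to $U^2=S'^2+T'^2$ with $V^2=2S'T'$, $\gcd(S',T')=1$, from which the paper concludes that $S'$ and $T'$ are both squares and hence that $U^2=U'^4+V'^4$, contradicting (1). You instead prove (1) from scratch by the standard descent, and for (2) you run a self-contained descent inside equation (2) itself: you first force $X$ odd (the $X$-even case drops to the smaller instance $Y^4+4(X/2)^4=Z'^2$), and then from $b^2=2pq$ with $\gcd(p,q)=1$ you extract the representation $p=2s^2$, $q=t^2$ (or the symmetric one), which yields $t^4+4s^4=a^2$ with $a<Z$. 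It is worth emphasizing that your handling of this last step is the correct one: from $V^2=2S'T'$ with $\gcd(S',T')=1$ one can only conclude that one of $S',T'$ is a square and the other is twice a square (the inference that both are squares is false in general, e.g.\ $S'=25$, $T'=8$, $V=20$), and the corrected conclusion sends the descent back into equation (2) rather than into equation (1)---which is exactly the shape of your argument. So your route is longer but fully self-contained (your part (2) never uses part (1)), and it avoids the coprime-factorization slip in the paper's proof of (2); the paper's route buys brevity by citing (1), but once its faulty step is repaired it essentially becomes your descent.
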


\begin{proof} [Proof of \cref{Fermat}]
$(1)$ is a well-know theorem by Fermat.

We prove $(2)$ by contradiction.

Suppose that there exist $X$, $Y$, $Z \in \mathbb{Z}_{>0}$ such that $X^4+4Y^4=Z^2$. 
If $\gcd(X,Y)=d \in \mathbb{Z}_{>0}$, then $Z$ is divisible by $d^2$.
Thus, by replacing $X$, $Y$, $Z$ by $X/d$, $Y/d$, $Z/d^2$ if necessary, 
we may assume that $\gcd(X,Y,Z)=1$.
Assume that $X$ is even. 
Since $\gcd(X^2/2,Y^2,Z/2)=1$, there exist $S$, $T \in \mathbb{Z}_{>0}$ such that  $\gcd(S,T)=1$ and
\[\begin{cases}
\frac{X^2}{2}=2ST,\\
Y^2=S^2-T^2,\\
\frac{Z}{2}=S^2+T^2.\\
\end{cases}\]
Since $\gcd(S,T)=1$, the first equality implies that there exist $U$, $V \in \mathbb{Z}_{>0}$ such that $S=U^2$ and $T=V^2$.
The second equality implies that $Y^2=U^4-V^4$.
Since $Y$ is odd, $U$ is odd and $V$ is even.
Therefore, there exists $S'$, $T' \in \mathbb{Z}_{>0}$ such that $\gcd(S',T')=1$ and
\[\begin{cases}
Y=S'^2-T'^2,\\
V^2=2S'T',\\
U^2=S'^2+T'^2.\\
\end{cases}\]
The second equality implies that there exist $U'$, $V' \in \mathbb{Z}_{>0}$ such that $S'=U'^2$ and $T'=V'^2$.
Therefore, $U^2=U'^4+V'^4$, which contradicts $(1)$.

The case when $X$ is odd is similar.
\end{proof}

\begin{proof} [Proof of \cref{ff}]
Recall that $C^{(i, j)} \subset \mathbb{P}^2_{(1,3,1)}$ is defined by the following weighted homogeneous equation: 
\[Y^2=XZ(X^2+2^it^{j}Z^2)(X^2+2^{i+1}t^{j}Z^2).\]
We may assume that $X$, $Y$, $Z \in \mathbb{Q}[t]$.
Put $t=s^4$, $U=s^{j}Y$ and $V=s^{2j}Z$. Then, we obtain the curve $\tilde{C}^{(i, j)}$ defined by the following equation over $\mathbb{Q}(s)$:
\[U^2=XV(X^2+2^iV^2)(X^2+2^{i+1}V^2).\]
It is sufficient to prove that $\tilde{C}^{(i, j)}(\mathbb{Q}(s)) = \{(0,0), \infty\}$.
Let $P:=[X:U:V] \in C^{(i, j)}(\mathbb{Q}(s))$.
We may assume that $X=X(s)$, $U = U(s)$, $V=V(s)$ are taken from $\mathbb{Z}[s]$ such that $\gcd(X, U, V) = 1$. 
Then, there exist coprime polynomials $U_1$, $U_2$, $U_3$, $U_4 \in \mathbb{Z}[s]$ such that
\[\begin{cases}
X=U_1^2,\\
V=U_2^2,\\
X^2+2^iV^2=U_3^2,\\
X^2+2^{i+1}V^2=U_4^2.
\end{cases}\] 
Since $i$ or $i+1$ is even, there exists $U_5 \in \mathbb{Z}[s]$ such that one of the following conditions holds.
\[\begin{cases}
U_1^4+U_5^4=U_3^2 & (i \equiv 0 \pmod{4}). \\
U_1^4+4U_5^4=U_4^2 & (i \equiv 1 \pmod{4}).\\
U_1^4+4U_5^4=U_3^2 &    (i \equiv 2 \pmod{4}).\\ 
U_1^4+U_5^4=U_4^2 &  (i \equiv 3 \pmod{4}).
\end{cases}\]
Suppose that $U_1^4+U_5^4=U_3^2$. 
Let $g := \gcd(U_1, U_3, U_5) \in \mathbb{Z}_{>0}$, $U_1' := U_1/g$, $U_3' := U_2/g^2$, $U_5' := U_5/g$.
If $U_1'$, $U_3'$, $U_5'$ are not all constants, then \cref{abc} implies that 
\[2\deg(U_3')< n_0(U_1'U_3'U_5') \leq \deg U_1' + \deg U_3' + \deg U_5'.\]
Thus, we have \[\deg(U_3'^2)<\deg (U_1'^2) + \deg (U_5'^2) \leq \deg (U_1'^4+U_5'^4), \] which is impossible.
Therefore, $U_1'$, $U_3'$, $U_5'$ are all integers such that $\gcd(U_1', U_3', U_5') = 1$.  
Thus, by \cref{Fermat} $(1)$, $(U_1',U_3',U_5')=(0,\pm 1,\pm 1)$ or $(\pm 1, \pm 1,0)$, i.e. $P=[0:0:1]$ or $[1:0:0]$ as desired.
The other cases are exactly similar. 
This completes the proof.
\end{proof}

\noindent {\bf Acknowledgement.}
The authors thank their advisor Ken-ichi Bannai for reading the draft and giving helpful comments.
The authors also thank him for warm and constant encouragement and professor Shuji Yamamoto for helpful comments. 

\begin{bibdiv}
\begin{biblist}
\bibselect{hyperelliptic}
\end{biblist}
\end{bibdiv}

\end{document}